\providecommand{\U}[1]{\protect\rule{.1in}{.1in}}
\numberwithin{equation}{section}
\newtheorem{theorem}{Theorem}[section]
\newtheorem{corollary}{Corollary}[section]
\newtheorem{lemma}{Lemma}[section]
\newtheorem{proposition}{Proposition}[section]
\newtheorem{remark}{Remark}[section]
\newtheorem{definition}{Definition}[section]
\numberwithin{equation}{section}
\newcommand{\bbr}{\mathbb{R}}
\newcommand{\bbn}{\mathbb{N}}
\newcommand{\ve}{\varepsilon}
\newcommand{\bd}{\begin{definition}}
\newcommand{\ed}{\end{definition}}
\newcommand{\br}{\begin{remark}}
\newcommand{\er}{\end{remark}}
\newcommand{\be}{\begin{equation}}
\newcommand{\ee}{\end{equation}}
\newcommand{\bc}{\begin{corollary}}
\newcommand{\ec}{\end{corollary}}
\begin{document}

\title[Caffarelli-Kohn-Nirenberg inequality]{ Stability  of Caffarelli-Kohn-Nirenberg inequality}

\author[J. Wei]{Juncheng Wei}
\address{\noindent Department of Mathematics, University of British Columbia,
Vancouver, B.C., Canada, V6T 1Z2}
\email{jcwei@math.ubc.ca}

\author[Y.Wu]{Yuanze Wu}
\address{\noindent  School of Mathematics, China
University of Mining and Technology, Xuzhou, 221116, P.R. China }
\email{wuyz850306@cumt.edu.cn}

\begin{abstract}
In this paper, we consider the Caffarelli-Kohn-Nirenberg (CKN) inequality:
\begin{eqnarray*}
\bigg(\int_{\bbr^N}|x|^{-b(p+1)}|u|^{p+1}dx\bigg)^{\frac{2}{p+1}}\leq C_{a,b,N}\int_{\bbr^N}|x|^{-2a}|\nabla u|^2dx
\end{eqnarray*}
where $N\geq3$, $a<\frac{N-2}{2}$, $a\leq b\leq a+1$ and $p=\frac{N+2(1+a-b)}{N-2(1+a-b)}$.   It is well-known that up to dilations $\tau^{\frac{N-2}{2}-a}u(\tau x)$ and scalar multiplications $Cu(x)$, the CKN inequality has a unique extremal function $W(x)$ which is positive and radially symmetric
in the parameter region $b_{FS}(a)\leq b<a+1$ with $a<0$ and $a\leq b<a+1$ with $a\geq0$ and $a+b>0$, where $b_{FS}(a)$ is the Felli-Schneider curve.  We prove that in the above parameter region the following stabilities hold:
\begin{enumerate}
\item[$(1)$] \quad  stability of CKN inequality in the functional inequality setting
$$dist_{D^{1,2}_{a}}^2(u, \mathcal{Z})\lesssim\|u\|^2_{D^{1,2}_a(\bbr^N)}-C_{a,b,N}^{-1}\|u\|^2_{L^{p+1}(|x|^{-b(p+1)},\bbr^N)}$$
where $\mathcal{Z}= \{ c W_\tau\mid c\in\bbr\backslash\{0\}, \tau>0\}$;
\item[$(2)$]\quad   stability of CKN inequality in the critical point setting (in the class of nonnegative functions)
\begin{eqnarray*}
dist_{D_a^{1,2}}(u, \mathcal{Z}_0^\nu)\lesssim\left\{\aligned &\Gamma(u),\quad p>2\text{ or }\nu=1,\\
&\Gamma(u)|\log\Gamma(u)|^{\frac12},\quad p=2\text{ and }\nu\geq2,\\
&\Gamma(u)^{\frac{p}{2}},\quad 1<p<2\text{ and }\nu\geq2,
\endaligned\right.
\end{eqnarray*}
where $\Gamma (u)=\|div(|x|^{-a}\nabla u)+|x|^{-b(p+1)}|u|^{p-1}u\|_{(D^{1,2}_a)^{'}}$ and
$$\mathcal{Z}_0^\nu=\{(W_{\tau_1},W_{\tau_2},\cdots,W_{\tau_\nu})\mid \tau_i>0\}.$$
\end{enumerate}

Our results generalize the recent works in \cite{CFM2017,FG2021,DSW2021} on the sharp stability of profile decompositions for the the special case $a=b=0$ (the Sobolev inequality) to the above parameter region of the CKN inequality.  This parameter region is optimal for such stabilities in the sense that in the region $a<b<b_{FS}(a)$ with $a<0$, the nonnegative solution of the Euler-Lagrange equation of CKN inequality is no longer unique.

\vspace{3mm} \noindent{\bf Keywords:} Caffarelli-Kohn-Nirenberg inequality; Sharp stability; Profile decomposition.

\vspace{3mm}\noindent {\bf AMS} Subject Classification 2010: 35B09; 35B33; 35B40; 35J20.%

\end{abstract}

\date{}
\maketitle

\section{Introduction}
In this paper, we consider the following Caffarelli-Kohn-Nirenberg (CKN for short) inequality:
\begin{eqnarray}\label{eq0001}
\bigg(\int_{\bbr^N}|x|^{-b(p+1)}|u|^{p+1}dx\bigg)^{\frac{2}{p+1}}\leq C_{a,b,N}\int_{\bbr^N}|x|^{-2a}|\nabla u|^2dx,
\end{eqnarray}
where
\begin{eqnarray}\label{eq0003}
N\geq3,\quad -\infty<a<\frac{N-2}{2},\quad a\leq b\leq a+1\quad \text{and} \quad p=\frac{N+2(1+a-b)}{N-2(1+a-b)},
\end{eqnarray}
$u\in D^{1,2}_{a}(\bbr^N)$ and $D^{1,2}_{a}(\bbr^N)$ is the Hilbert space given by
\begin{eqnarray}\label{eqn886}
D^{1,2}_{a}(\bbr^N)=\{u\in D^{1,2}(\bbr^N)\mid \int_{\bbr^N}|x|^{-2a}|\nabla u|^2dx<+\infty\}
\end{eqnarray}
with the inner product
\begin{eqnarray*}
\langle u,v \rangle_{D^{1,2}_{a}(\bbr^N)}=\int_{\bbr^N}|x|^{-2a}\nabla u\nabla vdx
\end{eqnarray*}
and $D^{1,2}(\bbr^N)=\dot{W}^{1,2}(\bbr^N)$ being the usual homogeneous Sobolev space (cf. \cite[Definition~2.1]{FG2021}).

\vskip0.2in

\eqref{eq0001} is established in the celebrated paper \cite{CKN1984} by Caffarelli, Kohn and Nirenberg, as it is named, for a much more general version.  Moreover, as pointed out in \cite{CW2001}, a fundamental task in understanding the CKN inequality~\eqref{eq0001} is to study the best constants, existence (and nonexistence) of extremal functions, as well as their qualitative properties for parameters $a$ and $b$ in the full region~\eqref{eq0003}, since \eqref{eq0001} contains the classical Sobolev inequality ($a=b=0$) and the classical Hardy inequality ($a=0, b=1$) as special cases, which have played important roles in many applications by virtue of the complete knowledge about the best constants, extremal functions, and their qualitative properties.

\vskip0.2in

Under the condition~\eqref{eq0003}, it is well-known (cf. \cite{A1976,CC1993,CW2001,L1983,T1976}) that \eqref{eq0001} has extremal functions if and only if either for $a<b<a+1$ with $a<0$ or for $a\leq b<a+1$ with $a\geq0$.  Moreover, let
\begin{eqnarray}\label{eqn991}
b_{FS}(a)=\frac{N(a_c-a)}{2\sqrt{(a_c-a)^2+(N-1)}}+a-a_c>a
\end{eqnarray}
be the Felli-Schneider curve found in \cite{FS2003},
then it is also well-known (cf. \cite{A1976,CC1993,DELT2009,DEL2012,DEL2016,FS2003,L1983,T1976}) that up to dilations $\tau^{a_c-a}u(\tau x)$ and scalar multiplications $Cu(x)$ (also up to translations $u(x+y)$ in the special case $a=b=0$), \eqref{eq0001} has a unique extremal function
\begin{eqnarray}\label{eq0004}
W(x)=(2(p+1)(a_c-a)^2)^{\frac{1}{(p-1)}}\bigg(1+|x|^{(a_c-a)(p-1)}\bigg)^{-\frac{2}{p-1}}
\end{eqnarray}
either for $b_{FS}(a)\leq b<a+1$ with $a<0$ or for $a\leq b<a+1$ with $a\geq0$ while, extremal functions of \eqref{eq0001} must be non-radial for $a<b<b_{FS}(a)$ with $a<0$.  Here, for the sake of simplicity, we denote $a_c=\frac{N-2}{2}$, as that in \cite{DELT2009,DEL2012,DEL2016}.  On the other hand, it has been proved in \cite{LW2004} that extremal functions of \eqref{eq0001} must have $\mathcal{O}(N-1)$ symmetry for $a<b<b_{FS}(a)$ with $a<0$, that is, extremal functions of \eqref{eq0001} must depend on the radius $r$ and the angle $\theta_{N}$ between the positive $x_N$-axis and $\overrightarrow{Ox}$ for $a<b<b_{FS}(a)$ with $a<0$ up to rotations.  To our best knowledge, whether the extremal function of \eqref{eq0001} is unique or not for $a<b<b_{FS}(a)$ with $a<0$ is still unknown.

\vskip0.2in

Once the extremal functions of \eqref{eq0001} are well understood, it is natural to study quantitative stability for the CKN inequality~\eqref{eq0001} by asking whether the deviation of a given function from attaining equality in \eqref{eq0001} controls its distance to the family of extremal functions.  These studies were initialed by Brez\'is and Lieb in \cite{BL1985} by raising an open question for the classical Sobolev inequality ($a=b=0$) which was settled by Bianchi and Egnell in \cite{BE1991} while, quantitative stability for the Hardy-Sobolev inequality ($a=0$, $0<b<1$) was studied in \cite{RSW2002}.
Since the extremal function of \eqref{eq0001} is unique up to dilations $\tau^{a_c-a}u(\tau x)$ and scalar multiplications $Cu(x)$ either for $b_{FS}(a)\leq b<a+1$ with $a<0$ or for $a\leq b<a+1$ with $a\geq0$ and $a+b>0$,
the smooth manifold
\begin{eqnarray*}
\mathcal{Z}=\{cW_\tau(x)\mid c\in\bbr\backslash\{0\}\text{ and }\tau>0\}
\end{eqnarray*}
is all extremal functions of \eqref{eq0001} in the above parameter region.  Thus, it is natural to extend the quantitative stability for the Sobolev inequality ($a=b=0$) and the Hardy-Sobolev inequality ($a=0$, $0<b<1$) to the CKN inequality~\eqref{eq0001} in the above parameter region.  Our first result in this paper, which devoted to this aspect, can be stated as follows.
\begin{theorem}\label{thm0001}
Let $b_{FS}(a)$
be the Felli-Schneider curve given by \eqref{eqn991} and assume that either
\begin{enumerate}
\item[$(1)$]\quad $b_{FS}(a)\leq b<a+1$ with $a<0$ or
\item[$(2)$]\quad $a\leq b<a+1$ with $a\geq0$ and $a+b>0$.
\end{enumerate}
Then
\begin{eqnarray*}
dist_{D^{1,2}_{a}}^2(u, \mathcal{Z})\lesssim\|u\|^2_{D^{1,2}_a(\bbr^N)}-C_{a,b,N}^{-1}\|u\|^2_{L^{p+1}(|x|^{-b(p+1)},\bbr^N)}
\end{eqnarray*}
for all $u\in D^{1,2}_a(\bbr^N)$, where $L^{p+1}(|x|^{-b(p+1)},\bbr^N)$ is the usual weighted Lebesgue space and its usual norm is given by
\begin{eqnarray*}
\|u\|_{L^{p+1}(|x|^{-b(p+1)},\bbr^N)}=\bigg(\int_{\bbr^N}|x|^{-b(p+1)}|u|^{p+1}dx\bigg)^{\frac{1}{p+1}}.
\end{eqnarray*}
\end{theorem}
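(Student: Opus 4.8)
\textbf{Proof proposal for Theorem \ref{thm0001}.}

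The plan is to follow the Bianchi--Egnell strategy, which has two parts: a local analysis near the manifold $\mathcal{Z}$ using the spectral gap of the linearized operator, followed by a global-to-local reduction via concentration-compactness. The key structural input that makes this work in the stated parameter region --- and fails outside it --- is that $W$ is the \emph{unique} (up to the symmetries generating $\mathcal{Z}$) extremal, together with the fact that the second variation of the quotient functional at $W$ is nondegenerate transverse to $\mathcal{Z}$. Precisely, writing the deficit $\delta(u)=\|u\|^2_{D^{1,2}_a}-C_{a,b,N}^{-1}\|u\|^2_{L^{p+1}(|x|^{-b(p+1)})}$, I would first establish the local statement: there exists $\eta>0$ such that for all $u$ with $\mathrm{dist}_{D^{1,2}_a}(u,\mathcal{Z})<\eta\|u\|_{D^{1,2}_a}$ one has $\mathrm{dist}^2_{D^{1,2}_a}(u,\mathcal{Z})\lesssim\delta(u)$.

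For the local estimate, I would decompose $u = cW_\tau + v$ where $cW_\tau$ realizes (or nearly realizes) the distance, so that $v$ is orthogonal in $D^{1,2}_a$ to the tangent space $T_{cW_\tau}\mathcal{Z} = \mathrm{span}\{W_\tau, \partial_\tau W_\tau\}$. By scaling invariance of both sides under $u\mapsto \lambda^{a_c-a}u(\lambda\cdot)$ and homogeneity, one reduces to $c=1$, $\tau=1$. Taylor-expanding $\delta(W+v)$ to second order gives $\delta(W+v) = \tfrac12 Q(v,v) + o(\|v\|^2)$ where, up to a positive constant, $Q(v,v) = \|v\|^2_{D^{1,2}_a} - p\,C_{a,b,N}^{-1}\|W\|^{1-p}_{L^{p+1}}\int |x|^{-b(p+1)}|W|^{p-1}v^2$. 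The crux is the claim that $Q(v,v)\geq c_0\|v\|^2_{D^{1,2}_a}$ on the orthogonal complement of $T_W\mathcal{Z}$, which is equivalent to the statement that the second eigenvalue of the weighted eigenvalue problem $-\mathrm{div}(|x|^{-2a}\nabla\phi) = \mu\, |x|^{-b(p+1)}W^{p-1}\phi$ exceeds the value $p\,\mu_1$ attained on the kernel directions. In the radial-extremal regime this is exactly the content of the Felli--Schneider stability/nondegeneracy analysis (and its extensions in \cite{DELT2009,DEL2012,DEL2016}): the linearized operator at $W$ has kernel spanned precisely by $\partial_\tau W_\tau$ (the dilation) and the first spherical-harmonic modes decouple favorably when $b\geq b_{FS}(a)$, yielding a strict spectral gap. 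I would cite this nondegeneracy as the key ingredient; verifying the gap constant is the main technical obstacle and the place where the hypothesis $b_{FS}(a)\leq b$ (resp.\ $a+b>0$) is indispensable.

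To pass from local to global, suppose the inequality fails: take $u_n$ with $\delta(u_n)\to 0$ but $\mathrm{dist}^2_{D^{1,2}_a}(u_n,\mathcal{Z}) > n\,\delta(u_n)$; normalize $\|u_n\|_{D^{1,2}_a}=1$. Then $u_n$ is a minimizing sequence for the CKN quotient, and by the concentration-compactness principle adapted to the weighted space $D^{1,2}_a$ (the relevant profile decomposition is available; compare the discussion following \cite{CW2001} and the framework of \cite{FG2021}), after applying dilations $u_n$ converges strongly in $D^{1,2}_a$ to an extremal, i.e.\ to some element of $\mathcal{Z}$ --- here uniqueness of the extremal up to $\mathcal{Z}$ is used crucially to rule out a nontrivial profile decomposition with more than one bubble or a non-radial limit. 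Hence $\mathrm{dist}_{D^{1,2}_a}(u_n,\mathcal{Z})\to 0$, so eventually $u_n$ lies in the neighborhood where the local estimate applies, contradicting $\mathrm{dist}^2_{D^{1,2}_a}(u_n,\mathcal{Z})>n\,\delta(u_n)$. Combining the two regimes and using homogeneity to remove the normalization yields the theorem for all $u\in D^{1,2}_a(\bbr^N)$. The main obstacle, as noted, is the spectral gap for the weighted linearized operator; the concentration-compactness step is by now standard in this weighted setting, and the reduction from local to global is routine once both pieces are in place.
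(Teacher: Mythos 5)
Your proposal follows the same Bianchi--Egnell two-step strategy as the paper's proof of Proposition~\ref{prop0003}: a local estimate obtained by decomposing $u=c_0W_{\tau_0}+\phi_{\tau_0}$ with orthogonality to $\mathrm{span}\{W_{\tau_0},\partial_\tau W_\tau|_{\tau=\tau_0}\}$, Taylor expansion of the deficit, and the spectral gap coming from Morse index one plus the Felli--Schneider nondegeneracy; followed by a contradiction argument reducing the global case to the local one via the relative compactness of minimizing sequences (Proposition~\ref{prop0001}). The approach and all key ingredients match, so the proposal is correct and essentially identical to the paper's argument.
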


\begin{remark}
The conditions $b_{FS}(a)\leq b<a+1$ with $a<0$ and $a\leq b<a+1$ with $a\geq0$ and $a+b>0$ in Theorem~\ref{thm0001} is sharp in the sense that, for $a<b<b_{FS}(a)$ with $a<0$, the extremal function of \eqref{eq0001} is no longer $cW_\tau$.
\end{remark}

\vskip0.2in

On the other hand, it is well-known that the Euler-Lagrange equation of the Sobolev inequality ($a=b=0$) is given by
\begin{eqnarray}\label{eqn880}
-\Delta u=|u|^{\frac{4}{N-2}}u, \quad\text{in }\bbr^N
\end{eqnarray}
and the Aubin-Talanti bubbles, given by
\begin{eqnarray*}
U[z,\lambda]=[N(N-2)]^{\frac{N-2}{4}}\bigg(\frac{\lambda}{\lambda^2+|x-z|^2}\bigg)^{\frac{N-2}{2}},
\end{eqnarray*}
are the only poaitive solutions of \eqref{eqn880}, where $z\in\bbr^N$ and $\lambda>0$.  Thus, the smooth manifold (except $c=0$)
\begin{eqnarray*}
\mathcal{M}=\{cU[z,\lambda]\mid c\in\bbr, z\in\bbr^N, \lambda>0\}
\end{eqnarray*}
is all nonnagetive solutions of \eqref{eqn880}. Moreover,
Struwe proved in \cite{S1984} the following well-known stability of profile decompositions to \eqref{eqn880} for nonnegative functions.
\begin{theorem}\label{thm0003}
Let $N\geq3$ and $\nu\geq1$ be positive integers.  Let $\{u_n\}\subset D^{1,2}(\bbr^N)$ be a nonnegative sequence with
\begin{eqnarray*}
(\nu-\frac12)S^\frac{N}{2}<\|u_n\|_{D^{1,2}(\bbr^N)}^2<(\nu+\frac12)S^\frac{N}{2},
\end{eqnarray*}
where $S$ is the best Sobolev constant.
Assume that $\|\Delta u_n+|u_n|^{\frac{4}{N-2}}u_n\|_{H^{-1}}\to0$ as $n\to\infty$, then there exist a sequence $(z_1^{(n)}, z_2^{(n)}, \cdots, z_\nu^{(n)})$ of $\nu$-tuples of points in $\bbr^N$ and a sequence of $(\lambda_1^{(n)}, \lambda_2^{(n)}, \cdots, \lambda_\nu^{(n)})$ of $\nu$-tuples of positive real numbers such that
\begin{eqnarray*}
\|\nabla u_n-\sum_{i=1}^{\nu}\nabla U[z_i^{(n)}, \lambda_i^{(n)}]\|_{L^2(\bbr^N)}\to0\quad\text{as }n\to\infty.
\end{eqnarray*}
\end{theorem}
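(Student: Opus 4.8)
The plan is to read the hypotheses as saying that $\{u_n\}$ is a nonnegative Palais-Smale sequence for the functional
\[
J(u)=\frac12\int_{\bbr^N}|\nabla u|^2\,dx-\frac{N-2}{2N}\int_{\bbr^N}|u|^{\frac{2N}{N-2}}\,dx
\]
on $D^{1,2}(\bbr^N)$, to feed it into Struwe's global compactness theorem \cite{S1984} (in its $\bbr^N$ form), and to use the strict energy pinching only at the very end, to force the number of bubbles in the resulting profile decomposition to be exactly $\nu$ and to rule out any nontrivial bubble-free residue. In fact the statement is essentially a restatement of Struwe's theorem with the energy level pinched between two consecutive multiples of $S^{N/2}$.

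First I would verify the Palais-Smale structure. The assumption $\|\Delta u_n+|u_n|^{\frac{4}{N-2}}u_n\|_{H^{-1}}\to0$ is exactly $J'(u_n)\to0$ in $(D^{1,2}(\bbr^N))'$. Pairing $J'(u_n)$ with $u_n$ and using the norm bound gives $\|u_n\|_{D^{1,2}}^2-\int_{\bbr^N}|u_n|^{\frac{2N}{N-2}}\,dx=o(1)$, so the $L^{2N/(N-2)}$-norms are bounded and $J(u_n)=\frac1N\|u_n\|_{D^{1,2}}^2+o(1)$. After passing to a subsequence we may assume $\|u_n\|_{D^{1,2}}^2\to L$ with $(\nu-\tfrac12)S^{N/2}\le L\le(\nu+\tfrac12)S^{N/2}$; thus $\{u_n\}$ is a bounded nonnegative Palais-Smale sequence for $J$ at level $L/N$.

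Next I would apply the profile decomposition: along a further subsequence there exist $u^{(0)}\in D^{1,2}(\bbr^N)$ solving $-\Delta u^{(0)}=|u^{(0)}|^{\frac{4}{N-2}}u^{(0)}$, an integer $k\ge0$, nontrivial solutions $u^{(1)},\dots,u^{(k)}$ of the same equation, and parameters $z_j^{(n)}\in\bbr^N$, $\lambda_j^{(n)}>0$ subject to the usual mutual orthogonality condition, such that, writing $w_j^{(n)}(x)=(\lambda_j^{(n)})^{-\frac{N-2}{2}}u^{(j)}\big((x-z_j^{(n)})/\lambda_j^{(n)}\big)$,
\[
\Big\|u_n-u^{(0)}-\sum_{j=1}^{k}w_j^{(n)}\Big\|_{D^{1,2}}\to0
\quad\text{and}\quad
\|u_n\|_{D^{1,2}}^2=\|u^{(0)}\|_{D^{1,2}}^2+\sum_{j=1}^{k}\|u^{(j)}\|_{D^{1,2}}^2+o(1).
\]
On $\bbr^N$ there is no boundary, so every profile is an entire solution. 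Since $u_n\ge0$ and each profile is a weak limit of nonnegative functions --- $u_n$ itself for $u^{(0)}$, and the rescalings $x\mapsto(\lambda_j^{(n)})^{\frac{N-2}{2}}u_n(\lambda_j^{(n)}x+z_j^{(n)})$ for $u^{(j)}$ --- all of $u^{(0)},u^{(1)},\dots,u^{(k)}$ are nonnegative. By standard elliptic regularity and the strong maximum principle, a nontrivial one among them is a positive $D^{1,2}$-solution of $-\Delta v=v^{\frac{N+2}{N-2}}$, hence by the Caffarelli-Gidas-Spruck classification it equals an Aubin-Talanti bubble $U[z,\lambda]$, for which $\|U[z,\lambda]\|_{D^{1,2}}^2=S^{N/2}$.

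Finally I would count. Let $m$ be the number of nonzero profiles among $u^{(0)},\dots,u^{(k)}$; the norm identity gives $L=\|u_n\|_{D^{1,2}}^2\to mS^{N/2}$, so $m$ is an integer with $\nu-\tfrac12\le m\le\nu+\tfrac12$, i.e.\ $m=\nu$. Relabelling the $\nu$ nonzero profiles as Aubin-Talanti bubbles $U[z_i^{(n)},\lambda_i^{(n)}]$ --- where $u^{(0)}$, if nonzero, simply contributes a constant sequence of parameters --- the first displayed limit becomes $\|\nabla u_n-\sum_{i=1}^{\nu}\nabla U[z_i^{(n)},\lambda_i^{(n)}]\|_{L^2(\bbr^N)}\to0$ along the subsequence. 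Since the same reasoning applies to an arbitrary subsequence, the $D^{1,2}(\bbr^N)$-distance of $u_n$ to the set $\{\sum_{i=1}^{\nu}U[z_i,\lambda_i]\mid z_i\in\bbr^N,\ \lambda_i>0\}$ has the property that every subsequence admits a further subsequence converging to $0$; hence it converges to $0$ along the full sequence, which is the assertion. The only genuinely delicate ingredients are the $\bbr^N$ version of Struwe's theorem with the correct almost-orthogonality of the bubbles (which is what makes both the $D^{1,2}$-norms and the $J$-energies split additively) and the preservation of nonnegativity through the profile extraction; granting these, the pinching hypothesis does the rest by elementary arithmetic.
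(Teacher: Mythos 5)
The paper does not actually prove Theorem~\ref{thm0003}: it is quoted verbatim as Struwe's classical result \cite{S1984}, so there is no in-paper proof to compare against. Your argument is the standard modern route --- read the hypotheses as giving a bounded nonnegative Palais--Smale sequence for $J$, invoke the $\bbr^N$ profile decomposition, use the fact that weak limits of (rescaled translates of) nonnegative functions are nonnegative together with elliptic regularity and the Caffarelli--Gidas--Spruck classification to identify every nontrivial profile with an Aubin--Talenti bubble of Dirichlet energy $S^{N/2}$, and let the energy pinching fix the bubble count at exactly $\nu$ --- and it is correct, modulo the two ingredients you explicitly flag as quoted (the $\bbr^N$ version of the global compactness theorem with almost-orthogonality of scales, and the persistence of nonnegativity through profile extraction), both of which are standard. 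The counting step survives passing to the limit of the strict inequalities because $mS^{N/2}$ can only land in the closed interval $[(\nu-\tfrac12)S^{N/2},(\nu+\tfrac12)S^{N/2}]$ at the integer $m=\nu$, and your subsequence-of-subsequences argument correctly upgrades the conclusion to the full sequence. It is worth contrasting this with the closest statement the paper does prove, Proposition~\ref{prop0002} (the CKN analogue): there, in the critical case $a=b$, the authors do not quote an abstract decomposition but rerun Struwe's concentration-function argument by hand --- normalizing via $Q_n(r)$, proving local strong convergence by testing against cut-offs, excluding vanishing via the strict gap $C_{a,a,N}^{-1}<S$, identifying the limit with $W$ by the uniqueness results of Chou--Chu and Dolbeault--Esteban--Loss, and iterating --- precisely because no off-the-shelf profile decomposition is available in the weighted setting. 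For the unweighted Theorem~\ref{thm0003} both routes are legitimate, and yours is the more economical.
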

In the recent papers \cite{CFM2017,FG2021}, Figalli et al. initialed a study on the quantitative version of Theorem~\ref{thm0003} by proving
\begin{enumerate}
\item[$(1)$] (Ciraolo-Figalli-Maggi \cite{CFM2017})\quad Let $N\geq3$ and $u\in D^{1,2}(\bbr^N)$ be positive such that $\|\nabla u\|_{L^2(\bbr^N)}^2\leq\frac{3}{2}S^{\frac{N}{2}}$ and $\|\Delta u+|u|^{\frac{4}{N-2}}u\|_{H^{-1}}\leq\delta$ for some $\delta>0$ sufficiently small, then
    \begin{eqnarray*}
    dist_{D^{1,2}}(u, \mathcal{M}_0)\lesssim\|\Delta u+|u|^{\frac{4}{N-2}}u\|_{H^{-1}},
    \end{eqnarray*}
    where $\mathcal{M}_0=\{U[z, \lambda]\mid z\in\bbr^N, \lambda>0\}.$
\item[$(2)$] (Figalli-Glaudo \cite{FG2021})\quad Let $u\in D^{1,2}(\bbr^N)$ be nonnegative such that
\begin{eqnarray*}
(\nu-\frac12)S^\frac{N}{2}<\|u\|_{D^{1,2}(\bbr^N)}^2<(\nu+\frac12)S^\frac{N}{2}
\end{eqnarray*}
and $\|\Delta u+|u|^{\frac{4}{N-2}}u\|_{H^{-1}}\leq\delta$ for some $\delta>0$ sufficiently small,
then for $3\leq N\leq5$,
    \begin{eqnarray*}
    dist_{D^{1,2}}(u, \mathcal{M}_0^\nu)\lesssim\|\Delta u+|u|^{\frac{4}{N-2}}u\|_{H^{-1}}
    \end{eqnarray*}
    where
    \begin{eqnarray*}
    \mathcal{M}_0^\nu=\{\sum_{i=1}^\nu U[z_i, \lambda_i] \mid z_i\in\bbr^N, \lambda_i>0\}.
    \end{eqnarray*}
\end{enumerate}
\begin{remark}
The stability obtained in \cite{FG2021} is more general than the conclusion~$(2)$ stated here in the sense that, $u$ could be sign-changing if $u$ is close to the sum of $U[z_i, \lambda_i]$ in $D^{1,2}(\bbr^N)$ where $U[z_i, \lambda_i]$ are weakly interacting (the definition of weakly interaction can be found in \cite[Definition~3.1]{FG2021}).  We choose to state the conclusion~$(2)$ since it is more close to Struwe's theorem on the stability of profile decompositions to \eqref{eqn880} for nonnegative functions.
\end{remark}
As pointed out in \cite{FG2021}, it is rather surprisingly that the conclusion~$(2)$ is false for $N\geq6$.  Figalli and Glaudo constructed a counterexample of the conclusion~$(2)$ for $N\geq6$ with two bubbles and conjectured in \cite{FG2021} that the quantitative version of Theorem~\ref{thm0003} for $N\geq6$ and $\nu>1$ will be
\begin{eqnarray*}
dist_{D^{1,2}}(u, \mathcal{M}_0^\nu)\lesssim\left\{\aligned&\|\Delta u+|u|u\|_{H^{-1}}|\ln(\|\Delta u+|u|u\|_{H^{-1}})|,\quad N=6;\\
&\|\Delta u+|u|^{\frac{4}{N-2}}u\|_{H^{-1}}^{\gamma(N)},\quad N\geq7
\endaligned\right.
\end{eqnarray*}
with $0<\gamma(N)<1$ under the same assumptions of the conclusion~$(2)$.  In the very recent work \cite{DSW2021}, the first author of the current paper, together with Deng and Sun, proved that the quantitative version of Theorem~\ref{thm0003} for $N\geq6$ and $\nu>1$ is actually
\begin{eqnarray*}
dist_{D^{1,2}}(u, \mathcal{M}_0^\nu)\lesssim\left\{\aligned&\|\Delta u+|u|u\|_{H^{-1}}|\ln(\|\Delta u+|u|u\|_{H^{-1}})|^{\frac12},\ \ N=6;\\
&\|\Delta u+|u|^{\frac{4}{N-2}}u\|_{H^{-1}}^{\frac{N+2}{2(N-2)}},\ \ N\geq7
\endaligned\right.
\end{eqnarray*}
under the same assumptions of the conclusion~$(2)$, where the orders of the right hand sides in above estimates are optimal.  We would like to refer the recent works \cite{CFM2017,FG2021} once more for more motivations, discussions and applications of the study on the quantitative version of Theorem~\ref{thm0003}.

\vskip0.2in

Since the Sobolev inequality ($a=b=0$) is a special case of the CKN inequality~\eqref{eq0001} and the Euler-Lagrange equation of \eqref{eq0001} is given by
\begin{eqnarray}\label{eq0018}
-div(|x|^{-a}\nabla u)=|x|^{-b(p+1)}|u|^{p-1}u, \quad\text{in }\bbr^N,
\end{eqnarray}
it is natural to ask whether the stability of profile decompositions to \eqref{eq0018} for nonnegative functions which is similar to that of \eqref{eqn880} holds or not.  Our second main result of this paper, which devoted to this natural question, can be stated as follows.
\begin{theorem}\label{thm0002}
Let $N\geq3$ and $\nu\geq1$ be positive integers.  Let $b_{FS}(a)$
be the Felli-Schneider curve given by \eqref{eqn991} and assume that either $b_{FS}(a)\leq b<a+1$ with $a<0$ or $a\leq b<a+1$ with $a\geq0$ and $a+b>0$.
Then for any nonnegative $u\in D^{1,2}_a(\bbr^N)$ such that
\begin{eqnarray}\label{eqn99}
(\nu-\frac12)(C_{a,b,N}^{-1})^{\frac{p+1}{p-1}}<\|u\|_{D^{1,2}_a(\bbr^N)}^2<(\nu+\frac12)(C_{a,b,N}^{-1})^{\frac{p+1}{p-1}}
\end{eqnarray}
and $\Gamma(u)\leq\delta$ with some $\delta>0$ sufficiently small,
we have
\begin{eqnarray*}
dist_{D_a^{1,2}}(u, \mathcal{Z}_0^\nu)\lesssim\left\{\aligned &\Gamma(u),\quad p>2\text{ or }\nu=1,\\
&\Gamma(u)|\log\Gamma(u)|^{\frac12},\quad p=2\text{ and }\nu\geq2,\\
&\Gamma(u)^{\frac{p}{2}},\quad 1<p<2\text{ and }\nu\geq2.
\endaligned\right.
\end{eqnarray*}
where $\Gamma (u)=\|div(|x|^{-a}\nabla u)+|x|^{-b(p+1)}|u|^{p-1}u\|_{(D^{1,2}_a)^{'}}$
and
$$
\mathcal{Z}_0^\nu=\{\sum_{j=1}^\nu W_{\tau_j} \mid \tau_j>0\}.
$$
Moreover, this stability is sharp in the sense that, there exists nonnegative $u_*\in D^{1,2}_a(\bbr^N)$, satisfying \eqref{eqn99} and $\Gamma(u_*)\leq\delta$ for some $\delta>0$ sufficiently small, such that
\begin{eqnarray*}
dist_{D_a^{1,2}}(u_*, \mathcal{Z}_0^\nu)\gtrsim\left\{\aligned &\Gamma(u_*),\quad p>2\text{ or }\nu=1,\\
&\Gamma(u_*)|\log\Gamma(u_*)|^{\frac12},\quad p=2\text{ and }\nu\geq2,\\
&\Gamma(u_*)^{\frac{p}{2}},\quad 1<p<2\text{ and }\nu\geq2.
\endaligned\right.
\end{eqnarray*}
\end{theorem}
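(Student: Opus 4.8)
I would prove Theorem~\ref{thm0002} by a finite-dimensional reduction in the spirit of Ciraolo--Figalli--Maggi \cite{CFM2017}, Figalli--Glaudo \cite{FG2021} and Deng--Sun--Wei \cite{DSW2021}, carried out in the weighted Hilbert space $D^{1,2}_a(\bbr^N)$. Three facts about the parameter region drive the argument and fix the shape of the estimate. (i) In this region $W$ in \eqref{eq0004} is radially symmetric and, up to dilation and scalar multiplication, is the only nonnegative solution of \eqref{eq0018}; hence any bubbling configuration produced by a Struwe-type profile decomposition of an approximate solution is a \emph{radial tower} — all bubbles centered at the origin, only the scales $\tau_1,\dots,\tau_\nu$ varying. (ii) $W$ is non-degenerate in $D^{1,2}_a(\bbr^N)$: the kernel of $L_W\phi:=-div(|x|^{-a}\nabla\phi)-p|x|^{-b(p+1)}W^{p-1}\phi$ is the one-dimensional span of the dilation generator $Z_0:=(a_c-a)W+x\cdot\nabla W$. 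This non-degeneracy holds exactly in the present region and is where the Felli--Schneider condition enters: below the curve the quadratic form of $L_W$ loses positivity on the first angular mode and new kernel directions appear. (iii) Under the Emden--Fowler change of variables $u(x)=|x|^{a-a_c}v(-\ln|x|,x/|x|)$, equation \eqref{eq0018} becomes a constant-coefficient critical problem on $\bbr\times\mathbb S^{N-1}$ of effective dimension $n:=\frac{2(p+1)}{p-1}$, so $p>2$, $p=2$, $1<p<2$ correspond to $n<6$, $n=6$, $n>6$; this is why the trichotomy of rates mirrors the $N<6$, $N=6$, $N>6$ trichotomy of the Sobolev case.

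For a nonnegative $u$ satisfying \eqref{eqn99} with $\Gamma(u)\le\delta$, a concentration-compactness argument based on (i) gives, for $\delta$ small, a decomposition $u=\sigma+\rho$ with $\sigma=\sum_{j=1}^\nu W_{\tau_j}$, such that (after relabeling $\tau_1\le\dots\le\tau_\nu$) the interaction quantity $\mathcal Q:=\sum_{i\ne j}\min(\tau_i/\tau_j,\tau_j/\tau_i)^{a_c-a}$ and $\|\rho\|_{D^{1,2}_a}$ both tend to $0$ as $\delta\to0$. Choosing the $\tau_j$ to minimize $\|u-\sum_j W_{\tau_j}\|_{D^{1,2}_a}$ gives the orthogonality $\rho\in T_\sigma^\perp$, $T_\sigma=\mathrm{span}\{Z_k\}_{k=1}^\nu$ with $Z_k=(a_c-a)W_{\tau_k}+x\cdot\nabla W_{\tau_k}$, and $d:=\|\rho\|_{D^{1,2}_a}\simeq dist_{D^{1,2}_a}(u,\mathcal Z_0^\nu)$. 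Since each $W_{\tau_j}$ solves \eqref{eq0018}, substituting $u=\sigma+\rho$ in \eqref{eq0018} with right-hand side $f$ ($\|f\|_{(D^{1,2}_a)'}=\Gamma(u)$) yields
\[
L_\sigma\rho=f+\mathcal E+\mathcal R(\rho),\qquad L_\sigma\rho:=-div(|x|^{-a}\nabla\rho)-p\,|x|^{-b(p+1)}\sigma^{p-1}\rho,
\]
with interaction error $\mathcal E:=|x|^{-b(p+1)}(\sigma^p-\sum_j W_{\tau_j}^p)$ and nonlinear remainder $\mathcal R(\rho):=|x|^{-b(p+1)}((\sigma+\rho)^p-\sigma^p-p\,\sigma^{p-1}\rho)$ (nonnegativity of $u$ removes the absolute values). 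By (ii), the smallness of $\mathcal Q$, and $\rho\perp T_\sigma$, the operator $L_\sigma$ is coercive: $\langle L_\sigma\rho,\rho\rangle\ge c\|\rho\|_{D^{1,2}_a}^2$.

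Pairing the error equation with $\rho$ and using coercivity, together with the CKN estimate $\|\mathcal R(\rho)\|_{(D^{1,2}_a)'}\lesssim d^{\min(p,2)}$ (from $|\mathcal R(\rho)|\lesssim|x|^{-b(p+1)}(\sigma^{p-2}\rho^2+|\rho|^p)$ if $p\ge2$, $|\mathcal R(\rho)|\lesssim|x|^{-b(p+1)}|\rho|^p$ if $1<p<2$), gives $d\lesssim\Gamma(u)+\|\mathcal E\|_{(D^{1,2}_a)'}$. Pairing with each $Z_k$ and using $L_\sigma Z_k=p\,|x|^{-b(p+1)}(W_{\tau_k}^{p-1}-\sigma^{p-1})Z_k$ gives $\nu$ reduced identities $\langle\mathcal E,Z_k\rangle=\langle\rho,L_\sigma Z_k\rangle-\langle f,Z_k\rangle-\langle\mathcal R(\rho),Z_k\rangle$. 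The heart of the proof is the evaluation, via the Emden--Fowler reduction, of the weighted bubble-interaction integrals: one shows $\|\mathcal E\|_{(D^{1,2}_a)'}\simeq\mathcal Q$ (with a $|\log\mathcal Q|^{1/2}$-type correction precisely at $p=2$, coming from the borderline integral $\int|x|^{-b(p+1)}W^{2}W_{\tau_j}$), $\|L_\sigma Z_k\|_{(D^{1,2}_a)'}\lesssim\mathcal Q^{\kappa}$ for some $\kappa\in(0,1]$, and $\langle\mathcal E,Z_k\rangle=c(\mathcal Q_{k-1,k}-\mathcal Q_{k,k+1})+(\text{lower order})$ with $\mathcal Q_{i,i+1}=(\tau_i/\tau_{i+1})^{a_c-a}$. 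If $p>2$ or $\nu=1$, the $C^2$ bound $\|\mathcal R(\rho)\|\lesssim d^2=o(d)$ holds (and $\mathcal E\equiv0$ when $\nu=1$), and the two inequalities close into $d\lesssim\Gamma(u)$. If $p\le2$ and $\nu\ge2$, the nonlinearity $t\mapsto t^p$ is only $C^{1,p-1}$, so $\mathcal R(\rho)$ cannot be controlled better than $d^p$ near the bubble cores; this loss, combined with the possible cancellation of $\mathcal Q_{k-1,k}-\mathcal Q_{k,k+1}$ across the tower, blocks the absorption of $d^p$, and a bootstrap of the two inequalities produces $d\lesssim\Gamma(u)|\log\Gamma(u)|^{1/2}$ ($p=2$) and $d\lesssim\Gamma(u)^{p/2}$ ($1<p<2$). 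The main technical obstacle is carrying out this last step with sharp constants: tracking the second-order $\rho$--$\sigma$ interaction and the borderline weighted integrals to the precision of \cite{DSW2021}, but now with the weights $|x|^{-2a}$, $|x|^{-b(p+1)}$ and the scalar $\mathcal Q$ in place of the Euclidean $\varepsilon_{ij}$.

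For the sharpness I would construct an explicit nonnegative $u_*=\sigma_*+\rho_*$, where $\sigma_*$ is a radial tower whose geometric ratio of scales is finely tuned so that the leading tangential force $\mathcal Q_{k-1,k}-\mathcal Q_{k,k+1}$ cancels, and $\rho_*$ is the corresponding corrector. Then $d_*:=dist_{D^{1,2}_a}(u_*,\mathcal Z_0^\nu)\simeq\|\rho_*\|_{D^{1,2}_a}$, while the residual $\Gamma(u_*)$, governed by the non-$C^2$ remainder $\mathcal R(\rho_*)$ and the borderline weighted integral, is of strictly smaller order: $\Gamma(u_*)\simeq d_*$ for $p>2$, $\Gamma(u_*)\simeq d_*/|\log d_*|^{1/2}$ for $p=2$, and $\Gamma(u_*)\simeq d_*^{2/p}$ for $1<p<2$. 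Inverting these relations gives exactly the claimed lower bounds on $dist_{D^{1,2}_a}(u_*,\mathcal Z_0^\nu)$, which meet the upper bounds as $\delta\to0$ and establish optimality.
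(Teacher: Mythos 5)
Your overall architecture (write $u=\sigma+\rho$ with $\sigma$ a radial bubble tower, use nondegeneracy of $W$, pair the error equation with $\rho$ and with the tangent directions, and evaluate weighted interaction integrals) is in the same spirit as the paper, which works on the cylinder after the Emden--Fowler transformation and obtains the degenerate rates through an auxiliary ``first approximation'' $\phi$ constructed by a linear theory in weighted sup-norms (Lemmas~\ref{lem0002} and \ref{lem0003}). However, two steps of your argument as written would fail. First, the coercivity claim $\langle L_\sigma\rho,\rho\rangle\ge c\|\rho\|^2_{D^{1,2}_a}$ for $\rho\perp T_\sigma=\mathrm{span}\{Z_k\}$ is false: each bubble has Morse index one, with $\langle L_{W}W,W\rangle=(1-p)\|W\|^2_{D^{1,2}_a}<0$, and $W_{\tau_k}$ is orthogonal to $Z_k$ (because $\|W_\tau\|_{D^{1,2}_a}$ is $\tau$-independent), so the negative directions of the quadratic form lie \emph{inside} $T_\sigma^\perp$. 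Minimizing only over the scales $\tau_j$ does not give orthogonality to the $W_{\tau_k}$ themselves. The paper repairs exactly this point by decomposing the remainder as $\sum_j\beta_j\Psi_{s_j}+\Psi^{\perp}$ with $\Psi^{\perp}\in(M_0\oplus M)^{\perp}$, where coercivity \eqref{eq0032} does hold, and estimating the coefficients $\beta_j$ separately from the projected equation (Lemma~\ref{lem0004}); you need this extra step.

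Second, your diagnosis of the source of the rates $\Gamma|\log\Gamma|^{1/2}$ and $\Gamma^{p/2}$ is incorrect for $1<p\le2$, and your stated chain of estimates is internally inconsistent with the theorem: if $\|\mathcal E\|_{(D^{1,2}_a)'}\simeq\mathcal Q$ and $\mathcal Q\lesssim\Gamma(u)$, then $d\lesssim\Gamma(u)+\|\mathcal E\|\lesssim\Gamma(u)$ for all $p$, contradicting sharpness; while the nonlinear remainder you blame contributes $O(d^{p+1})=o(d^2)$ when paired with $\rho$ (and $O(d^p)=o(d)$ in dual norm, since $p>1$), so it is always absorbable and never degrades a linear bound. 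The true obstruction is $\mathcal E$ itself: on the cylinder $E=(\sum_j\Psi_{s_j})^p-\sum_j\Psi_{s_j}^p\sim Q\,e^{-(a_c-a)(p-2)|t-s_i|}$ near $s_i$ (see \eqref{eqn19990}--\eqref{eqn19989}), which for $p<2$ \emph{grows} toward the neck, so that $\|\mathcal E\|_{(D^{1,2}_a)'}\sim Q^{p/2}$ (and $\sim Q|\log Q|^{1/2}$ at $p=2$), not $\sim Q$. This is precisely what Lemma~\ref{lem0003} quantifies, showing the component of $\rho$ driven by $E$ has size $Q$, $Q|\log Q|^{1/2}$, $Q^{p/2}$ in the three regimes, and it is what the sharpness examples saturate. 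Relatedly, your sharpness construction by ``tuning the scales so that $\mathcal Q_{k-1,k}-\mathcal Q_{k,k+1}$ cancels'' is not the mechanism used (and for $\nu=2$ there is nothing to cancel); the paper instead solves the projected two-bubble problem \eqref{eq0045} so that the residual $f_R$ is exactly the Lagrange-multiplier term of size $Q$, bounds the distance from below by testing against cut-offs supported in the neck, and then must additionally pass to $v_R^{+}$ and control $\|v_R^{-}\|$ to produce a genuinely nonnegative example --- an issue your sketch does not address.
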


\begin{remark}
\begin{enumerate}
\item[$(1)$]\quad The conditions $b_{FS}(a)\leq b<a+1$ with $a<0$ and $a\leq b<a+1$ with $a\geq0$ and $a+b>0$ in Theorem~\ref{thm0002} is optimal in the sense that, for $a<b<b_{FS}(a)$ with $a<0$, the nonnegative solutions of \eqref{eq0018} is no longer unique.
\item[$(2)$]\quad $(2)$ of Theorem~\ref{thm0002} can be generalized to more general class of functions as that in \cite[Theorem~3.3]{FG2021} by introducing the concept of weakly interaction for the nonnegative solutions of \eqref{eq0018} as that in \cite[Definition~3.1]{FG2021}.
\end{enumerate}
\end{remark}

\noindent{\bf\large Notations.} Throughout this paper, $C$ and $C'$ are indiscriminately used to denote various absolutely positive constants.  $a\sim b$ means that $C'b\leq a\leq Cb$ and $a\lesssim b$ means that $a\leq Cb$.

\section{Preliminaries}
Let $D^{1,2}_{a}(\bbr^N)$ be the Hilbert space given by \eqref{eqn886} with the norm $\|\cdot\|_{D^{1,2}_a(\bbr^N)}$.
By \cite[Proposition~2.2]{CW2001}, $D^{1,2}_{a}(\bbr^N)$ is isomorphic to the Hilbert space $H^1(\mathcal{C})$ by the transformation
\begin{eqnarray}\label{eq0007}
u(x)=|x|^{-(a_c-a)}v(-\ln|x|,\frac{x}{|x|}),
\end{eqnarray}
where $\mathcal{C}=\bbr\times \mathbb{S}^{N-1}$ is the standard cylinder, the inner product in $H^1(\mathcal{C})$ is given by
\begin{eqnarray*}
\langle w,v \rangle_{H^1(\mathcal{C})}=\int_{\mathcal{C}}\nabla w\nabla v+(a_c-a)^2 uv d\mu
\end{eqnarray*}
with $d\mu$ being the volume element on $\mathcal{C}$,
$u\in D^{1,2}_{a}(\bbr^N)$ and $w,v\in H^1(\mathcal{C})$.

\vskip0.12in

The CKN inequality~\eqref{eq0001} can be rewritten as the following minimizing problem:
\begin{eqnarray}\label{eq0002}
C_{a,b,N}^{-1}=\inf_{u\in D^{1,2}_{a}(\bbr^N)\backslash\{0\}}\frac{\|u\|^2_{D^{1,2}_a(\bbr^N)}}{\|u\|^2_{L^{p+1}(|x|^{-b(p+1)},\bbr^N)}},
\end{eqnarray}
where $L^{p+1}(|x|^{-b(p+1)},\bbr^N)$ is the usual weighted Lebesgue space and its usual norm is given by $\|u\|_{L^{p+1}(|x|^{-b(p+1)},\bbr^N)}=\bigg(\int_{\bbr^N}|x|^{-b(p+1)}|u|^{p+1}dx\bigg)^{\frac{1}{p+1}}$.
The Euler-Lagrange equation of the minimizing problem~\eqref{eq0002} is given by \eqref{eq0018}.
By \eqref{eq0007}, \eqref{eq0002} is equivalent to the following minimizing problem:
\begin{eqnarray}\label{eq0009}
C_{a,b,N}^{-1}=\inf_{v\in H^1(\mathcal{C})\backslash\{0\}}\frac{\|v\|^2_{H^{1}(\mathcal{C})}}{\|u\|^2_{L^{p+1}(\mathcal{C})}},
\end{eqnarray}
where $\|\cdot\|_{L^{p+1}(\mathcal{C})}$ is the usual norm in the Lebesgue space $L^{p+1}(\mathcal{C})$.  Let $t=-\ln|x|$ and $\theta=\frac{x}{|x|}$ for $x\in\bbr^N\backslash\{0\}$, then by \cite[Proposition~2.2]{CW2001}, \eqref{eq0018} is equivalent to the following equation of $v$:
\begin{eqnarray}\label{eq0006}
-\Delta_{\theta}v-\partial_t^2v+(a_c-a)^2v=|v|^{p-1}v, \quad\text{in }\mathcal{C}
\end{eqnarray}
where $\Delta_{\theta}$ is the Laplace-Beltrami operator on $\mathbb{S}^{N-1}$.

\vskip0.12in

Clearly, minimizers of \eqref{eq0002} are ground states of \eqref{eq0018}.  It is also well-known (cf. \cite{CC1993,CW2001,FS2003}) that up to dilations $u_{\tau}(x)=\tau^{a_c-a}u(\tau x)$ and scalar multiplications $Cu(x)$ (also up to translations $u(x+y)$ for the spacial case $a=b=0$), the radial function $W(x)$ given by \eqref{eq0004} is also the unique minimizer of the following minimizing problem
\begin{eqnarray*}\label{eq0005}
C_{a,b,N,rad}^{-1}=\inf_{u\in D^{1,2}_{a,rad}(\bbr^N)\backslash\{0\}}\frac{\|u\|^2_{D^{1,2}_a(\bbr^N)}}{\|u\|^2_{L^{p+1}(|x|^{-b(p+1)},\bbr^N)}}
\end{eqnarray*}
under the condition~\eqref{eq0003},
where
\begin{eqnarray*}
D^{1,2}_{a,rad}(\bbr^N)=\{u\in D^{1,2}_{a}(\bbr^N)\mid u\text{ is radially symmetric}\}.
\end{eqnarray*}
Thus, $W(x)$ is always a solution of \eqref{eq0018} under the condition~\eqref{eq0003}.  It has been proved in \cite{CC1993,DEL2016} that $W(x)$ is the unique nonnegative solution of \eqref{eq0018} in $D^{1,2}_a(\bbr^N)$ either for $b_{FS}(a)\leq b<a+1$ with $a<0$ or for $a\leq b<a+1$ with $a\geq0$.  Moreover, it has also been proved in \cite{FS2003} that $W(x)$ is nondegenerate in $D^{1,2}_a(\bbr^N)$ under the condition~\eqref{eq0003}.  That is, up to scalar multiplications $CV(x)$,
\begin{eqnarray}\label{eq0010}
V(x):=\nabla W(x)\cdot x-(a_c-a)W(x)=\frac{\partial}{\partial\lambda}(\lambda^{-(a_c-a)}W(\lambda x))|_{\lambda=1}
\end{eqnarray}
is the only nonzero solution in $D^{1,2}_a(\bbr^N)$ to the linearization of \eqref{eq0018} around $W$ which is given by
\begin{eqnarray}\label{eq0017}
-div(|x|^{-a}\nabla u)=p|x|^{-b(p+1)}W^{p-1}u, \quad\text{in }\bbr^N.
\end{eqnarray}
By the transformation~\eqref{eq0007}, the linear equation~\eqref{eq0017} can be rewritten as follows:
\begin{eqnarray}\label{eq0016}
-\Delta_{\theta}v-\partial_t^2v+(a_c-a)^2v=p\Psi^{p-1}v, \quad\text{in }\mathcal{C},
\end{eqnarray}
where $\Delta_{\theta}$ is the Laplace-Beltrami operator on $\mathbb{S}^{N-1}$, $t=-\ln|x|$ and $\theta=\frac{x}{|x|}$ for $x\in\bbr^N\backslash\{0\}$, and
\begin{eqnarray}\label{eq0026}
\Psi(t)=\bigg(\frac{(p+1)(a_c-a)^2}{2}\bigg)^{\frac{1}{p-1}}\bigg(cosh(\frac{(a_c-a)(p-1)}{2}t)\bigg)^{-\frac{2}{p-1}}.
\end{eqnarray}
It follows from the transformation~\eqref{eq0007} that
\begin{eqnarray*}
\Psi_s'(t)=\Psi'(t-\log s)=\frac{\partial}{\partial t}\Psi(t-\log s)=-s\frac{\partial}{\partial s}\Psi(t-\log s)
\end{eqnarray*}
is the only nonzero solution of \eqref{eq0026} in $H^1(\mathcal{C})$ .

\section{Profile decompositions of nonnegative functions}
It is well-known that all minimizers of \eqref{eq0002} are positive in $\bbr^N\backslash\{0\}$.  Indeed, let
\begin{eqnarray*}
\mathcal{E}(u)=\frac{1}{2}\|u\|^2_{D^{1,2}_a(\bbr^N)}-\frac{1}{p+1}\|u\|^{p+1}_{L^{p+1}(|x|^{-b(p+1)},\bbr^N)},
\end{eqnarray*}
then by \eqref{eq0001}, $\mathcal{E}(u)$ is of class $C^2$ in $D^{1,2}_a(\bbr^N)$.  Since it is well-known that extremal functions of \eqref{eq0002} are ground states of \eqref{eq0018}, extremal functions of \eqref{eq0002} are also minimizers of the minimizing problem
\begin{eqnarray*}
c=\inf_{u\in\mathcal{N}}\mathcal{E}(u),
\end{eqnarray*}
where
\begin{eqnarray*}
\mathcal{N}=\{u\in D^{1,2}_a(\bbr^N)\backslash\{0\}\mid \mathcal{E}'(u)u=0\}
\end{eqnarray*}
is the usual Nehari manifold.  Since $p>1$ for $a\leq b<a+1$, it is standard to use the fibering maps to show the double-energy property of $\mathcal{E}(u)$, that is, $c_{sg}\geq2c$, where $c_{sg}=\inf_{u\in\mathcal{N}_{sg}}\mathcal{E}(u)$ with
\begin{eqnarray*}
\mathcal{N}_{sg}=\{u\in D^{1,2}_a(\bbr^N)\backslash\{0\}\mid \mathcal{E}'(u^\pm)u^\pm=0\}
\end{eqnarray*}
and $u^{\pm}=\max\{\pm u, 0\}$.
Thus, by the the double-energy property of $\mathcal{E}(u)$, all minimizers of $\mathcal{E}(u)$ in $\mathcal{N}$ at the energy level $c$ are nonnegative which implies that all extremal functions of \eqref{eq0002} are nonnegative.  It follows from the maximum principle that all extremal functions of \eqref{eq0002} are positive in $\bbr^N\backslash\{0\}$.

\vskip0.12in

As that of the Sobolev and Hardy-Sobolev inequalities, we have the following relatively compactness of minimizing sequences of \eqref{eq0002}.
\begin{proposition}\label{prop0001}
Suppose that $\{u_n\}\subset D^{1,2}_a(\bbr^N)$ be a minimizing sequence of \eqref{eq0002} either for $a<b<a+1$ with $a<0$ or for $a\leq b<a+1$ with $a\geq0$ and $a+b>0$.  Then there exists $\{\tau_n\}\subset(0, +\infty)$ such that $(u_n)_{\tau_n}\to u_0$ strongly in $D^{1,2}_a(\bbr^N)$ as $n\to\infty$ up to a subsequence, where $u_0$ is a minimizer of \eqref{eq0002}.  Moreover, we have $u_0=CW_{\tau_0}$ with some $\tau_0>0$ and $C\in\bbr\backslash\{0\}$ either for $b_{FS}(a)\leq b<a+1$ with $a<0$ or for $a\leq b<a+1$ with $a\geq0$ and $a+b>0$, where
$b_{FS}(a)$ is the Felli-Schneider curve given by \eqref{eqn991}.
\end{proposition}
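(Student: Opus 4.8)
The plan is to pass to the cylindrical picture via \eqref{eq0007}, where the scaling covariance $u\mapsto u_\tau$ of the Rayleigh quotient in \eqref{eq0002} becomes exact translation invariance in the axial variable $t$, and then run a concentration-compactness argument. After multiplying each $u_n$ by a suitable nonzero scalar we may assume $\|u_n\|_{L^{p+1}(|x|^{-b(p+1)},\bbr^N)}=1$, so that $\|u_n\|^2_{D^{1,2}_a(\bbr^N)}\to C_{a,b,N}^{-1}$ and $\{u_n\}$ is bounded in $D^{1,2}_a(\bbr^N)$. Let $v_n\in H^1(\mathcal{C})$ be the cylindrical profile of $u_n$ under \eqref{eq0007}; by \eqref{eq0009}, $\{v_n\}$ is a bounded minimizing sequence with $\|v_n\|_{L^{p+1}(\mathcal{C})}=1$, and a direct computation with \eqref{eq0007} shows that the dilation $u\mapsto u_\tau$ corresponds exactly to the axial translation $v\mapsto v(\cdot-\log\tau,\cdot)$. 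Hence it suffices to produce $s_n\in\bbr$ so that, along a subsequence, $v_n(\cdot+s_n,\cdot)\to v_0$ strongly in $H^1(\mathcal{C})$ with $v_0$ a minimizer of \eqref{eq0009}.

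Next I would apply the P.-L. Lions concentration-compactness dichotomy to $\{v_n\}$ in $H^1(\mathcal{C})$; since $\mathcal{C}=\bbr\times\mathbb{S}^{N-1}$ has a product structure, the vanishing lemma and the splitting estimates hold with constants uniform in axial translations. Vanishing is impossible, for it would force $v_n\to0$ in $L^{p+1}(\mathcal{C})$, against $\|v_n\|_{L^{p+1}(\mathcal{C})}=1$. Dichotomy is excluded by strict subadditivity: with $S_\lambda:=\inf\{\|v\|^2_{H^1(\mathcal{C})}\mid\|v\|^{p+1}_{L^{p+1}(\mathcal{C})}=\lambda\}$, the scaling $v\mapsto sv$ gives $S_\lambda=\lambda^{2/(p+1)}S_1$ with $2/(p+1)<1$ (as $p>1$ throughout the range $b<a+1$), so $S_{\lambda_1+\lambda_2}<S_{\lambda_1}+S_{\lambda_2}$ for $\lambda_i>0$, and splitting the $L^{p+1}$-mass of $\{v_n\}$ into two asymptotically axially separated parts would contradict minimality. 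In the stated range one has $b>a$ in case $(1)$, and either $b>a$ or $b=a>0$ in case $(2)$: if $b>a$ then $p+1=\frac{2N}{N-2(1+a-b)}$ is strictly subcritical on $\mathcal{C}$ and no further alternative arises, whereas if $b=a>0$ then $p+1=\frac{2N}{N-2}$ is the critical exponent and one must additionally rule out concentration at an interior point of $\mathcal{C}$; this follows because such a bubble carries $H^1(\mathcal{C})$-energy at the Sobolev level $\mathcal{S}$ while $C_{a,b,N}^{-1}<\mathcal{S}$ in this range (equivalently, \eqref{eq0001} admits an extremal here, as recalled above), so an energy comparison again yields a contradiction. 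With all bad alternatives excluded there are $s_n\in\bbr$ with $v_n(\cdot+s_n,\cdot)\rightharpoonup v_0\neq0$ carrying the full limiting mass; weak lower semicontinuity forces $\|v_0\|^2_{H^1(\mathcal{C})}=C_{a,b,N}^{-1}\|v_0\|^2_{L^{p+1}(\mathcal{C})}$ together with $\|v_n(\cdot+s_n,\cdot)\|_{H^1(\mathcal{C})}\to\|v_0\|_{H^1(\mathcal{C})}$, and in the Hilbert space $H^1(\mathcal{C})$ weak convergence plus convergence of norms gives strong convergence, with $v_0$ a minimizer of \eqref{eq0009}.

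Undoing \eqref{eq0007} with $\tau_n:=e^{-s_n}$, one gets $(u_n)_{\tau_n}\to u_0$ strongly in $D^{1,2}_a(\bbr^N)$, where $u_0$ is a (nonzero) minimizer of \eqref{eq0002}. Since, after normalization, $u_0$ is a ground state of \eqref{eq0018}, the double-energy property of $\mathcal{E}$ recalled above forces $u_0$ to have constant sign (a sign-changing minimizer lies in $\mathcal{N}_{sg}$ and has energy $\geq2c$), so a positive scalar multiple of $u_0$ or of $-u_0$ is a positive $D^{1,2}_a(\bbr^N)$-solution of \eqref{eq0018}. Finally, in the Felli-Schneider range $b_{FS}(a)\le b<a+1$ with $a<0$, and for $a\le b<a+1$ with $a\ge0$ and $a+b>0$, the positive $D^{1,2}_a(\bbr^N)$-solution of \eqref{eq0018} is unique up to the dilation $u\mapsto u_\tau$ (\cite{CC1993,DEL2016}), hence equals some $W_{\tau_0}$, and therefore $u_0=CW_{\tau_0}$ for some $\tau_0>0$ and $C\in\bbr\backslash\{0\}$, with $W$ as in \eqref{eq0004}.

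The passage to and from the cylinder and the identification of $u_0$ are essentially bookkeeping; the heart of the matter is the concentration-compactness step, and within it the delicate point is the critical case $b=a$ (which forces $a=b>0$ in the stated range), where a point-concentration bubble on $\mathcal{C}$ must be excluded using the strict inequality $C_{a,b,N}^{-1}<\mathcal{S}$ and a local blow-up analysis. This is precisely where the hypothesis $a+b>0$—equivalently, the exclusion of the pure Sobolev case $a=b=0$, for which the statement fails because $C_{0,0,N}^{-1}=\mathcal{S}$ and bubbles escaping to a point can no longer be ruled out—enters. One should also check that the vanishing lemma and the dichotomy splitting are uniform in axial translations of the non-compact manifold $\mathcal{C}$, which is guaranteed by its product structure.
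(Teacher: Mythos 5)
Your argument is correct, and it reaches the conclusion by a route that only partly coincides with the paper's. For the subcritical regime $a<b$ (which is the only regime the paper proves in detail, since it restricts to $a<0$ and cites \cite[Theorem~4]{WW2000} for all of $a\geq0$), the two arguments are essentially the same in substance: the paper recenters on the cylinder via \cite[Lemma~4.1]{CW2001} to obtain a nonzero weak limit and then uses Brez\'is--Lieb together with the strict concavity of $t\mapsto t^{2/(p+1)}$ to force the limit to absorb the full $L^{p+1}$ mass, which is exactly your ``strict subadditivity excludes dichotomy'' step phrased without the Lions machinery; your exclusion of vanishing is implicit in the citation of \cite[Lemma~4.1]{CW2001}. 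Where you genuinely diverge is the critical case $a=b>0$: the paper outsources this entirely to \cite{WW2000}, whereas you treat it directly by ruling out point concentration on $\mathcal{C}$ via the strict inequality $C_{a,a,N}^{-1}<S$. That is the right mechanism --- it is precisely \eqref{eqn992} (i.e.\ \cite[Lemma~1]{WW2000}), and the paper itself deploys the same comparison in the proof of Proposition~\ref{prop0002} to exclude escaping Aubin--Talenti bubbles --- but in your write-up it remains a sketch: the ``local blow-up analysis'' showing that a concentrating portion of mass $\lambda$ costs at least $S\lambda^{2/(p+1)}$ in $H^1(\mathcal{C})$-energy is asserted rather than carried out. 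If you intend the proof to be self-contained for $a=b>0$ you should supply that step (a cut-off/Brez\'is--Lieb computation as in \eqref{eq0051}--\eqref{eq0054}); otherwise a citation of \cite[Theorem~4]{WW2000}, as the paper does, closes the gap. The unified treatment buys you independence from that reference and makes transparent where the hypothesis $a+b>0$ enters; the paper's split buys brevity. Your final identification of $u_0$ (sign via the double-energy property, then uniqueness from \cite{CC1993,DEL2016}) matches the paper's.
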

\begin{proof}
Since the case $a\geq0$ is considered in \cite[Theorem~4]{WW2000}, we shall only give the proof for $a<0$.  Moreover, the proof is rather standard nowadays (cf. \cite{S2000}), so we only sketch it here.  Without loss of generality, we may assume that
\begin{eqnarray*}
\|u_n\|^2_{L^{p+1}(|x|^{-b(p+1)},\bbr^N)}=1.
\end{eqnarray*}
Then $\{u_n\}$ is bounded in $D^{1,2}_a(\bbr^N)$ and thus, $u_n\rightharpoonup \widehat{u}_0$ weakly in $D^{1,2}_a(\bbr^N)$ as $n\to\infty$ up to a subsequence.  Moreover, by the double-energy property of $\mathcal{E}(u)$, we may also assume that $\widehat{u}_0\geq0$ without loss of generality.  By \eqref{eq0007}, the corresponding $v_n\rightharpoonup v_0$ weakly in $H^1(\mathcal{C})$ as $n\to\infty$ up to a subsequence with $v_0\geq0$.  Since $a<b<a+1$ for $a<0$, we have $1<p<\frac{N+2}{N-2}$ by \eqref{eq0003}.  Thus, by \cite[Lemma~4.1]{CW2001}, there exists $\{\tau_n\}\subset\bbr$ such that
\begin{eqnarray*}
\overline{v}_n=v_n(t-\tau_n,\theta)\rightharpoonup \overline{v}_0\not=0\quad\text{weakly in $H^1(\mathcal{C})$ as $n\to\infty$}.
\end{eqnarray*}
It follows from the Brez\'is-Lieb lemma and the concavity of the function $t^{\frac{2}{p+1}}$ for $0<t<1$ with $p>1$ that
\begin{eqnarray*}
1+o_n(1)&=&C_{a,b,N}(\|\overline{v}_n-\overline{v}_0\|_{H^1(\mathcal{C})}^2+\|\overline{v}_0\|_{H^1(\mathcal{C})}^2)\\
&\geq&(1-\|\overline{v}_0\|_{L^{p+1}(\mathcal{C})}^{p+1}+o_n(1))^{\frac{2}{p+1}}+(\|\overline{v}_0\|_{L^{p+1}(\mathcal{C})}^{p+1})^{\frac{2}{p+1}}\\
&\geq&1+o_n(1),
\end{eqnarray*}
which implies that $\overline{v}_n\to\overline{v}_0$ strongly in $L^{p+1}(\mathcal{C})$ as $n\to\infty$.  Correspondingly, by \eqref{eq0007}, we have $(u_n)_{\tau_n}\to u_0$ strongly in $L^{p+1}(|x|^{-b(p+1)},\bbr^N)$ as $n\to\infty$.  It is then easy to show that $u_0$ is a minimizer of \eqref{eq0002}.  In the cases $b_{FS}(a)\leq b<a+1$ with $a<0$ or $a\leq b<a+1$ with $a\geq0$ and $a+b>0$, $W$ is the unique minimizer of \eqref{eq0002} up to dilations $u_{\tau}(x)=\tau^{a_c-a}u(\tau x)$ and scalar multiplications $Cu(x)$.  Thus, we must have
$u_0=C W_{\tau_0}$ with some $\tau_0>0$ and $C\in\bbr\backslash\{0\}$.
\end{proof}

\vskip0.12in

As the well-known results of profile decompositions to the Sobolev inequality due to Struwe (cf. \cite{S1984,S2000}), we have the following profile decompositions of \eqref{eq0018} for nonnegative functions which, to our best knowledge, is new.
\begin{proposition}\label{prop0002}
Let $\{w_n\}$ be a nonnegative $(PS)$ sequence of $\mathcal{E}(u)$ with
\begin{eqnarray*}
(\nu-\frac12)(C_{a,b,N}^{-1})^{\frac{p+1}{p-1}}<\|w_n\|_{D^{1,2}_a(\bbr^N)}^2<(\nu+\frac12)(C_{a,b,N}^{-1})^{\frac{p+1}{p-1}}
\end{eqnarray*}
with some $\nu\in\bbn$ either for $b_{FS}(a)\leq b<a+1$ with $a<0$ or for $a\leq b<a+1$ with $a\geq0$ and $a+b>0$ where
$b_{FS}(a)$ is the Felli-Schneider curve given by \eqref{eqn991}.
Then there exists $\{\tau_{i,n}\}\subset\bbr_+$, satisfying
\begin{eqnarray*}
\min_{i\not=j}\bigg\{\max\bigg\{\frac{\tau_{i,n}}{\tau_{j,n}},\frac{\tau_{j,n}}{\tau_{i,n}}\bigg\}\bigg\}\to+\infty
\end{eqnarray*}
as $n\to\infty$ for $\nu\geq2$, such that
\begin{enumerate}
\item[$(1)$]\quad $w_n=\sum_{i=1}^{\nu}(W)_{\tau_{i,n}}+o_n(1)$ in $D^{1,2}_a(\bbr^N)$.
\item[$(2)$]\quad $\|w_n\|_{D^{1,2}_a(\bbr^N)}^2=\nu\|W\|_{D^{1,2}_a(\bbr^N)}^2+o_n(1)$.
\end{enumerate}
\end{proposition}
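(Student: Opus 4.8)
The plan is to adapt Struwe's classical profile-decomposition argument (cf. \cite{S1984,S2000}) to the cylindrical formulation \eqref{eq0006}, exploiting the translation invariance in the $t$-variable of the equation on $\mathcal{C}=\bbr\times\mathbb{S}^{N-1}$, which corresponds exactly to the dilation invariance $u\mapsto u_\tau$ on $\bbr^N$. First I would pass to the cylinder via \eqref{eq0007}: the nonnegative $(PS)$ sequence $\{w_n\}$ for $\mathcal{E}$ in $D^{1,2}_a(\bbr^N)$ transforms into a nonnegative $(PS)$ sequence $\{v_n\}\subset H^1(\mathcal{C})$ for the corresponding functional $\mathcal{E}_{\mathcal{C}}(v)=\frac12\|v\|^2_{H^1(\mathcal{C})}-\frac1{p+1}\|v\|^{p+1}_{L^{p+1}(\mathcal{C})}$, whose Euler--Lagrange equation is \eqref{eq0006}. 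The energy bound transfers verbatim. Since $\{v_n\}$ is bounded in $H^1(\mathcal{C})$, we extract a weak limit $v_n\rightharpoonup v^{(0)}\geq 0$, which solves \eqref{eq0006}; by the uniqueness result (cf. \cite{CC1993,DEL2016}) valid precisely in the stated parameter range, $v^{(0)}$ is either $0$ or a translate $\Psi(\cdot-t_0)$ of the radial profile \eqref{eq0026}.

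The core of the argument is the iteration. Set $v_n^1 = v_n - v^{(0)}(\cdot - t_0^{(0)})$ (or $v_n^1=v_n$ if $v^{(0)}=0$). If $v_n^1\to 0$ strongly in $H^1(\mathcal{C})$ we stop; otherwise, using the fact that $\mathcal{E}_{\mathcal{C}}'(v_n)\to 0$ and a Lions-type concentration-compactness/vanishing dichotomy on the cylinder, there exist translations $t_n^{(1)}\to\pm\infty$ (the divergence being forced because $v_n\rightharpoonup v^{(0)}$ already captured the mass near the origin) such that $v_n^1(\cdot + t_n^{(1)})\rightharpoonup v^{(1)}\neq 0$ weakly in $H^1(\mathcal{C})$, and $v^{(1)}$ again solves \eqref{eq0006}, hence is a translate of $\Psi$. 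One then subtracts this second bubble and repeats. At each stage the Brez\'is--Lieb lemma gives the energy-splitting identity $\mathcal{E}_{\mathcal{C}}(v_n)=\sum_{i=0}^{k}\mathcal{E}_{\mathcal{C}}(v^{(i)})+\mathcal{E}_{\mathcal{C}}(v_n^{k+1})+o_n(1)$ together with the corresponding $H^1(\mathcal{C})$-norm splitting; since each nonzero $v^{(i)}$ is a translate of $\Psi$ and therefore contributes exactly $\mathcal{E}_{\mathcal{C}}(\Psi)=\frac{p-1}{2(p+1)}(C_{a,b,N}^{-1})^{\frac{p+1}{p-1}}$ to the energy and $\|\Psi\|^2_{H^1(\mathcal{C})}=\|W\|^2_{D^{1,2}_a}=(C_{a,b,N}^{-1})^{\frac{p+1}{p-1}}$ to the norm, the process must terminate after finitely many steps. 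The energy window $(\nu-\tfrac12,\nu+\tfrac12)(C_{a,b,N}^{-1})^{\frac{p+1}{p-1}}$ then pins the number of nontrivial profiles to be exactly $\nu$: fewer would leave energy below $(\nu-\tfrac12)(\cdots)$, more would exceed $(\nu+\tfrac12)(\cdots)$, and a nonzero remainder $v_n^{k+1}$ that does not converge strongly would, by the same dichotomy, generate yet another bubble. Hence $v_n = \sum_{i=1}^{\nu}\Psi(\cdot - t_{i,n}) + o_n(1)$ in $H^1(\mathcal{C})$ with $|t_{i,n}-t_{j,n}|\to\infty$ for $i\neq j$; translating back by \eqref{eq0007} yields $w_n=\sum_{i=1}^{\nu}W_{\tau_{i,n}}+o_n(1)$ in $D^{1,2}_a(\bbr^N)$ with $\max\{\tau_{i,n}/\tau_{j,n},\tau_{j,n}/\tau_{i,n}\}\to\infty$, which is (1); and the $H^1(\mathcal{C})$-norm splitting gives (2), since the cross terms $\langle \Psi(\cdot-t_{i,n}),\Psi(\cdot-t_{j,n})\rangle_{H^1(\mathcal{C})}\to 0$ by the exponential decay of $\Psi$ together with $|t_{i,n}-t_{j,n}|\to\infty$.

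The main obstacle is the concentration-compactness step on the cylinder: one must show that whenever the remainder $v_n^k$ does not vanish, its "mass" concentrates, after a suitable $t$-translation, around a bounded region, and that the rescaled limit is nonzero and solves \eqref{eq0006}. On $\bbr^N$ the analogous step for the Sobolev problem uses the full conformal group (dilations and translations) and rescaling around concentration points; here the only noncompact direction is the single translation parameter $t$, so the dichotomy is genuinely simpler (no scaling parameter, no translations in $\theta$ since $\mathbb{S}^{N-1}$ is compact), but one still needs the sub-additivity of the minimizing constant \eqref{eq0009} — equivalently, that a "half bubble" cannot form — to exclude partial splitting; this is exactly where the strict concavity of $t\mapsto t^{2/(p+1)}$ used in the proof of Proposition~\ref{prop0001} re-enters, and where the parameter restriction (ensuring $v^{(0)}$ and every profile is the \emph{radial} $\Psi$, via \cite{CC1993,DEL2016}) is essential. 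A secondary technical point is verifying that the subtracted-off functions $v_n^k(\cdot+t_n^{(k)})$ remain $(PS)$ sequences for $\mathcal{E}_{\mathcal{C}}$; this follows from the translation invariance of $\mathcal{E}_{\mathcal{C}}$ and the fact that $\mathcal{E}_{\mathcal{C}}'(\Psi(\cdot-t_0))=0$ for every $t_0$, exactly as in Struwe's original argument.
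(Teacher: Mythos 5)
Your argument is essentially the paper's proof of the subcritical case $a<b$ (where $p<\frac{N+2}{N-2}$): there the authors also pass to the cylinder via \eqref{eq0007} and invoke the translation-only concentration-compactness of \cite[Lemma~4.1]{CW2001}, exactly as you describe. However, there is a genuine gap: the hypotheses of the proposition also allow $a=b$ with $a>0$ (since $a\le b$ is permitted for $a\ge0$ and $a+b=2a>0$), in which case $p=\frac{N+2}{N-2}$ and $p+1=\frac{2N}{N-2}$ is the \emph{critical} Sobolev exponent for $H^1(\mathcal{C})$ on the $N$-dimensional cylinder $\mathcal{C}=\bbr\times\mathbb{S}^{N-1}$. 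Your claim that ``the only noncompact direction is the single translation parameter $t$'' is then false: a bounded sequence in $H^1(\mathcal{C})$ can lose compactness by concentrating at a point of $\mathcal{C}$ (equivalently, on $\bbr^N$, by forming an Aubin--Talenti bubble at a point $x_0\neq 0$, where the weight $|x|^{-2a}$ is locally regular and the local problem is the unweighted critical one). In that regime the weak limit of the translated remainder can vanish even though the remainder does not, so your dichotomy does not close and the iteration cannot be shown to terminate with only translates of $\Psi$.

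The paper handles this case by an entirely different argument in the original variables: a Struwe-type rescaling built on the concentration function $Q_n(r)=\sup_y\int_{B_r(y)}|x|^{-2a}|\nabla w_n|^2$, combined with the strict inequality $C_{a,a,N}^{-1}<S$ for $a>0$ (from \cite[Lemma~1]{WW2000}) and a covering argument, to show that the energy threshold captured in a single ball is too small to support a standard Aubin--Talenti bubble away from the origin; this forces all concentration to occur at the origin at dilation scales, where uniqueness of the nonnegative solution $W$ applies. To repair your proof you would need to add this exclusion step (or an equivalent one on the cylinder, where the same strict inequality $C_{a,a,N}^{-1}<S$ rules out point concentration); without it the decomposition could a priori contain extra profiles that are not dilates of $W$.
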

\begin{proof}
Since $p=\frac{N+2}{N-2}$ for $a=b$ and $p<\frac{N+2}{N-2}$ for $a<b$, We shall divide the proof into two parts which is devoted to the case $p<\frac{N+2}{N-2}$ and $p=\frac{N+2}{N-2}$, respectively.

\vskip0.12in

{\it The case $p<\frac{N+2}{N-2} (a<b)$.}

In this case, we use the transformation~\eqref{eq0007} to $w_n$.  Then the related $\widetilde{w}_n(t,\theta)$ satisfy
\begin{eqnarray*}
(\nu-\frac12)(C_{a,b,N}^{-1})^{\frac{p+1}{p-1}}<\|\widetilde{w}_n\|_{H^1(\mathcal{C})}^2<(\nu+\frac12)(C_{a,b,N}^{-1})^{\frac{p+1}{p-1}}
\end{eqnarray*}
and $\mathcal{J}'(\widetilde{w}_n)\to0$ in $H^{-1}(\mathcal{C})$ as $n\to\infty$, where $H^{-1}(\mathcal{C})$ is the dual space of $H^{1}(\mathcal{C})$ and
\begin{eqnarray*}
\mathcal{J}(u)=\frac{1}{2}\|u\|^2_{H^1(\mathcal{C})}-\frac{1}{p+1}\|u\|^{p+1}_{L^{p+1}(\mathcal{C})}.
\end{eqnarray*}
Since $p<\frac{N+2}{N-2}$ and $\Psi(t)$ is the unique nonnegative solution of \eqref{eq0026} in $H^1(\mathcal{C})$ either for $b_{FS}(a)\leq b<a+1$ with $a<0$ or for $a<b<a+1$ with $a\geq0$ and $a+b>0$, the conclusion then follows from \eqref{eq0007} and adapting \cite[Lemma~4.1]{CW2001} in a standard way.

\vskip0.12in

{\it The case $p=\frac{N+2}{N-2} (a=b)$.}

In this case, we have $a>0$ by the assumptions.  Moreover, \cite[Lemma~4.1]{CW2001} is invalid to drive the conclusion and thus, we shall mainly follows Struwe's idea in proving \cite[Theorem~3.1]{S2000}.  However, according to the singular potential $|x|^{-2a}$, the argument is more involved. Let $U_\ve$ be the standard Aubin-Talanti bubbles, that is,
\begin{eqnarray*}
U_\ve=[N(N-2)]^{\frac{N-2}{4}}\bigg(\frac{\ve}{\ve^2+|x|^2}\bigg)^{\frac{N-2}{2}}.
\end{eqnarray*}
By \cite[Lemma~1]{WW2000},
\begin{eqnarray}\label{eqn992}
C_{a,a,N}^{-1}<S\quad\text{for }a>0.
\end{eqnarray}
Thus, there exists $R_\ve>0$ such that
\begin{eqnarray}\label{eq0054}
\int_{B_{R_\ve}(0)}|\nabla U_\ve|^2dx>\frac{1}{L_{R_\ve}}(C_{a,b,N}^{-1})^{\frac{N}{2}},
\end{eqnarray}
where $L_{R_\ve}$ is the number such that the ball $B_{2R_\ve}(0)$ is covered by $L_{R_\ve}$ balls with radius $R_\ve$.  Here, we have used the fact that $2\leq L_{R_\ve}\leq 2^N$.  Let
\begin{eqnarray*}
Q_n(r)=\sup_{y\in\bbr^N}\int_{B_r(y)}|x|^{-2a}|\nabla w_n|^2dx
\end{eqnarray*}
be the well-known concentration function of $w_n$.  Since
\begin{eqnarray*}
(\nu-\frac12)(C_{a,a,N}^{-1})^{\frac{N}{2}}<\|w_n\|_{D^{1,2}_a(\bbr^N)}^2<(\nu+\frac12)(C_{a,a,N}^{-1})^{\frac{N}{2}}
\end{eqnarray*}
for some $\nu\in\bbn$, we can choose $r_n>0$ and $y_n\in\bbr^N$ such that
\begin{eqnarray*}
Q_n(r_n)=\int_{B_{r_n}(y_n)}|x|^{-2a}|\nabla w_n|^2dx=\frac{1}{2L_{R_\ve}}(C_{a,a,N}^{-1})^{\frac{N}{2}}.
\end{eqnarray*}
Let
\begin{eqnarray*}
v_n=(r_nR_\ve^{-1})^{-(a_c-a)}w_n(r_nR_\ve^{-1}x),
\end{eqnarray*}
then
\begin{eqnarray}\label{eq0050}
\sup_{y\in\bbr^N}\int_{B_{R_\ve}(y)}|x|^{-2a}|\nabla v_n|^2dx=\int_{B_{R_\ve}(\frac{R_\ve y_n}{r_n})}|x|^{-2a}|\nabla v_n|^2dx=\frac{1}{2L_{R_\ve}}(C_{a,a,N}^{-1})^{\frac{N}{2}}.
\end{eqnarray}
Since $\|\cdot\|_{D^{1,2}_a}$ is invariant under the dilation $u_{\tau}(x)=\tau^{a_c-a}u(\tau x)$, $\{v_n\}$ is bounded in $D^{1,2}(\bbr^N)$ and thus, $v_n\rightharpoonup v_0$ weakly in $D^{1,2}(\bbr^N)$ as $n\to\infty$ up to a subsequence.  Clearly, $v_0\geq0$.  We define $\varpi_n=(v_n-v_0)\varphi$, where $\varphi$ is a smooth cut-off function such that $\varphi=1$ in $B_{R_\ve}(z)$ and $\varphi=0$ in $B^c_{\frac{3}{2}R_\ve}(z)$ for any $z\in\bbr^N$.  Then by \cite[Lemma~2]{WW2000},
\begin{eqnarray*}
\|\varpi_n\|_{D^{1,2}_a(\bbr^N)}^2\lesssim\|v_n-v_0\|_{D^{1,2}_a(\bbr^N)}^2+\int_{B_{2R_\ve}(z)\backslash B_{R_\ve}(z)}|x|^{-2a}|v_n-v_0|^2dx\lesssim1
\end{eqnarray*}
and
\begin{eqnarray*}
\int_{B_{2R_\ve}(z)\backslash B_{R_\ve}(z)}|x|^{-2a}|v_n-v_0|^2dx\to0\quad\text{as }n\to\infty.
\end{eqnarray*}
Note that by the fact that $\mathcal{E}'(w_n)\to0$ in $D^{-1,2}_a(\bbr^N)$ as $n\to\infty$ and the invariance of $\mathcal{E}(w_n)$ under the the dilation $u_{\tau}(x)=\tau^{a_c-a}u(\tau x)$, $\mathcal{E}'(v_n)\to0$ in $D^{-1,2}_a(\bbr^N)$ as $n\to\infty$.  It follows that $\mathcal{E}'(v_0)=0$, which, together with the Brez\'is-Lieb lemma, implies
\begin{eqnarray}
o_n(1)&=&\int_{\bbr^N}(|x|^{-2a}\nabla(v_n-v_0)\nabla (\varpi_n\varphi)-|x|^{-\frac{2Na}{N-2}}(v_n^{\frac{N+2}{N-2}}-v_0^{\frac{N+2}{N-2}})\varpi_n\varphi)dx\notag\\
&=&\int_{\bbr^N}(|x|^{-2a}|\nabla \varpi_n|^2-|x|^{-\frac{2Na}{N-2}}|v_n-v_0|^{\frac{4}{N-2}}|\varpi_n|^{2})dx+o_n(1)\notag\\
&\geq&\|\varpi_n\|_{D^{1,2}_a(\bbr^N)}^2-\|\varpi_{n,*}\|_{L^{\frac{2N}{N-2}}(|x|^{-\frac{2Na}{N-2}},\bbr^N)}^{\frac{4}{N-2}}\|\varpi_{n}\|_{L^{\frac{2N}{N-2}}(|x|^{-\frac{2Na}{N-2}},\bbr^N)}^2\notag\\
&&+o_n(1).\label{eq0051}
\end{eqnarray}
Here, $\varpi_{n,*}=(v_n-v_0)\varphi_*$, where $\varphi_*$ is a smooth cut-off function such that $\varphi_*=1$ in $B_{\frac{3}{2}R_\ve}(z)$ and $\varphi_*=0$ in $B^c_{2R_\ve}(z)$.  By the Brez\'is-Lieb lemma once more,
\begin{eqnarray*}
\|\varpi_{n,*}\|_{L^{\frac{2N}{N-2}}(|x|^{-\frac{2Na}{N-2}},\bbr^N)}^{\frac{2N}{N-2}}&\leq&\int_{B_{2R_\ve}(z)}|x|^{-\frac{2Na}{N-2}}|v_n-v_0|^{\frac{2N}{N-2}}dx\\
&\leq&\int_{B_{2R_\ve}(z)}|x|^{-\frac{2Na}{N-2}}|v_n|^{\frac{2N}{N-2}}dx+o_n(1).
\end{eqnarray*}
It follows from the CKN inequality~\eqref{eq0001}, \eqref{eq0050} and \eqref{eq0051} that $\varpi_n\to0$ strongly in $D^{1,2}_a(\bbr^N)$ as $n\to\infty$, which implies that $v_n\to v_0$ strongly in $D^{1,2}_a(B_{R_0}(z))$ as $n\to\infty$ for any $z\in\bbr^N$.  Thus, by a standard covering argument, $v_n\to v_0$ strongly $D^{1,2}_{a,loc}(\bbr^N)$ as $n\to\infty$.  We claim that $v_0\not=0$.  Assume the contrary that $v_0=0$, then by \eqref{eq0050}, $|\frac{R_\ve y_n}{r_n}|\to+\infty$ as $n\to\infty$.  It follows that
\begin{eqnarray*}
\int_{B_{\frac12|\frac{R_\ve y_n}{r_n}|}(\frac{R_\ve y_n}{r_n})}|\nabla(\frac{v_n}{|\frac{R_\ve y_n}{r_n}|^a})|^2\sim\int_{B_{\frac12|\frac{R_\ve y_n}{r_n}|}(\frac{R_\ve y_n}{r_n})}|x|^{-2a}|\nabla v_n|^2\lesssim\|v_n\|_{D^{1,2}_a(\bbr^N)}^2,
\end{eqnarray*}
which implies that $\{\overline{v}_n\}$ is bounded in $D^{1,2}_{loc}(\bbr^N)$ and thus, $\overline{v}_n\rightharpoonup \overline{v}_0$ weakly in $D^{1,2}_{loc}(\bbr^N)$ as $n\to\infty$, where $\overline{v}_n=\frac{v_n(x+\frac{R_\ve y_n}{r_n})}{|\frac{R_\ve y_n}{r_n}|^a}$.  Now, by $\mathcal{E}'(v_n)\to0$ in $D^{-1,2}_a(\bbr^N)$ as $n\to\infty$, we know that
\begin{eqnarray}\label{eq0053}
-\Delta\overline{v}_n=\overline{v}_n^{\frac{N+2}{N-2}}+o_n(1)\quad\text{in }D^{1,2}_{loc}(\bbr^N).
\end{eqnarray}
By \eqref{eqn992}, \eqref{eq0050} and $|\frac{R_\ve y_n}{r_n}|\to+\infty$ as $n\to\infty$,
\begin{eqnarray}
\int_{B_{R_\ve}(0)}|\nabla \overline{v}_n|^2dx&=&\sup_{y\in\bbr^N}\int_{B_{R_\ve}(y)}|\nabla \overline{v}_n|^2dx\notag\\
&=&\frac{1}{2L_{R_\ve}}(C_{a,b,N}^{-1})^{\frac{N}{2}}+o_n(1)\label{eq0052}\\
&<&\frac{1}{2L_{R_\ve}}S^{\frac{N}{2}}+o_n(1)\notag
\end{eqnarray}
Thus, by applying similar arguments as that used for \eqref{eq0051} to \eqref{eq0053}, we can show that $\overline{v}_n\to \overline{v}_0$ strongly in $D^{1,2}(B_{R_\ve}(0))$ as $n\to\infty$.  By \eqref{eq0052}, $\overline{v}_0\not=0$ and thus, $\overline{v}_0=U_\ve$ for some $\ve>0$ by \eqref{eq0053}.  It is impossible since \eqref{eq0054} and \eqref{eq0052} hold at the same time now.  Therefore, we must have $v_0\not=0$.  Since $v_0\geq0$, by \cite[Theorem~B and Proposition~4.4]{CC1993} and \cite[Theorem~1.2]{DEL2016}, we have $v_0=W$ either for $b_{FS}(a)\leq b<a+1$ with $a<0$ or for $a\leq b<a+1$ with $a\geq0$ and $a+b>0$.  Thus, $w_n\rightharpoonup(W)_{r_nR_\ve^{-1}}$ weakly in $D^{1,2}_a(\bbr^N)$ as $n\to\infty$.  By running the above argument to $w_n-(W)_{r_nR_\ve^{-1}}$, we will arrive at that $w_n\rightharpoonup(W)_{r_{n,1}r_nR_\ve^{-1} R_{\ve_1}^{-1}}+(W)_{r_{n,1}R_{\ve_1}^{-1}}$ weakly in $D^{1,2}_a(\bbr^N)$ as $n\to\infty$ for some $r_{n,1}>0$ and $\ve_1>0$.  The conclusion then follows from iterating the above arguments for $\nu$ times and using the fact that $W(|x|)$ is the unique nonnegative solution of \eqref{eq0018} in $D^{1,2}_a(\bbr^N)$ either for $b_{FS}(a)\leq b<a+1$ with $a<0$ or for $a\leq b<a+1$ with $a\geq0$ and $a+b>0$.
\end{proof}

\section{Stability of CKN inequality in the functional inequality setting}
It is well-known that the minimizing problem~\eqref{eq0002} and the equation~\eqref{eq0018} are invariant under the dilation $u_\tau(x)=\tau^{\frac{N-2-2a}{2}}u(\tau x)$.  Thus, the smooth manifold
\begin{eqnarray*}
\mathcal{Z}=\{cW_\tau(x)\mid c\in\bbr\backslash\{0\}\text{ and }\tau>0\}
\end{eqnarray*}
is all extremal functions of the minimizing problem~\eqref{eq0002}.  Let
\begin{eqnarray*}
d^2(u)=\inf_{c\in\bbr,\ \ \tau>0}\|u-cW_\tau\|^2_{D^{1,2}_a(\bbr^N)},
\end{eqnarray*}
where $u\in D^{1,2}_a(\bbr^N)$.  Then we have the following stability for the CKN inequality~\eqref{eq0001}.
\begin{proposition}\label{prop0003}
Let $e(u):=\|u\|^2_{D^{1,2}_a(\bbr^N)}-C_{a,b,N}^{-1}\|u\|^2_{L^{p+1}(|x|^{-b(p+1)},\bbr^N)}$.  Then $e(u)\gtrsim d^2(u)$ for all $u\in D^{1,2}_a(\bbr^N)$ in the following two cases:
\begin{enumerate}
\item[$(1)$]\quad $b_{FS}(a)\leq b<a+1$ with $a<0$,
\item[$(2)$]\quad $a\leq b<a+1$ with $a\geq0$ and $a+b>0$.
\end{enumerate}
\end{proposition}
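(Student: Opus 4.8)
The plan is to follow the Bianchi--Egnell strategy adapted to the cylinder picture. First I would argue by contradiction: suppose the inequality fails, so there is a sequence $\{u_n\}\subset D^{1,2}_a(\bbr^N)$ with $e(u_n)=o(d^2(u_n))$. By homogeneity (both $e$ and $d^2$ scale the same way under $u\mapsto cu_\tau$), normalize so that $\|u_n\|_{D^{1,2}_a}=1$; then $e(u_n)\to0$, which says $\{u_n\}$ is a minimizing sequence of \eqref{eq0002}. Apply Proposition~\ref{prop0001}: after replacing $u_n$ by a dilate $(u_n)_{\tau_n}$ (which changes neither $e$ nor $d$) we get $u_n\to u_0$ strongly in $D^{1,2}_a(\bbr^N)$ with $u_0=CW_{\tau_0}$, and we may further dilate and rescale so that $u_0=W$. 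Thus $d(u_n)\to0$ and it remains to analyze the second-order behavior of $e$ near the manifold $\mathcal{Z}$.

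Next I would pass to the cylinder via the isometry \eqref{eq0007}, so that $W$ corresponds to $\Psi$ and $\mathcal{Z}$ to the two-parameter family $\{c\,\Psi(\,\cdot-s\,)\}$. Write $u_n$ (in cylinder coordinates) as $v_n=\alpha_n\Psi(\cdot-s_n)+w_n$ where $(\alpha_n,s_n)$ realize the distance $d(u_n)$, so that $w_n$ is orthogonal in $H^1(\mathcal C)$ to both $\Psi(\cdot-s_n)$ and $\partial_t\Psi(\cdot-s_n)$ — these span the tangent space to $\mathcal{Z}$ at the nearest point. We have $\|w_n\|_{H^1(\mathcal C)}=d(u_n)\to0$ and $\alpha_n\to1$, $s_n$ bounded (after a harmless dilation, $s_n\to 0$). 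A Taylor expansion of $\mathcal J$ (or rather of the Rayleigh-type quotient governing $e$) at $\Psi$ gives
\begin{eqnarray*}
e(u_n)\geq \big(1-C_{a,b,N}^{-1}\,p\,\textstyle\frac{\|\Psi\|_{L^{p+1}}^{p-1}}{\|\Psi\|_{L^{p+1}}^{p-1}}\big)\cdots
\end{eqnarray*}
— more precisely, after differentiating the constraint twice one is led to control the quadratic form $Q(w)=\|w\|_{H^1(\mathcal C)}^2-p\int_{\mathcal C}\Psi^{p-1}w^2$ from below on the subspace orthogonal to $\mathrm{span}\{\Psi,\partial_t\Psi\}$. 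The key input here is the nondegeneracy and spectral information from \cite{FS2003}: the linearized operator $-\Delta_\theta-\partial_t^2+(a_c-a)^2-p\Psi^{p-1}$ has $\Psi$ as its unique negative-eigenvalue direction, $\partial_t\Psi$ spanning the kernel, and a spectral gap above; this is exactly where the Felli--Schneider condition $b\geq b_{FS}(a)$ enters, since for $b<b_{FS}(a)$ a second negative direction (coming from the first spherical harmonic) appears and the quadratic form is no longer coercive. Hence $Q(w_n)\geq c_0\|w_n\|_{H^1(\mathcal C)}^2$ for some $c_0>0$.

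Then I would expand $e(u_n)$ carefully: writing $t^{2/(p+1)}$ as the relevant concave function and using $\int\Psi^p w_n=0$ (the orthogonality to $\Psi$ times the Euler--Lagrange equation for $\Psi$), the linear term vanishes, the quadratic term is comparable to $Q(w_n)$, and the remainder is $o(\|w_n\|_{H^1}^2)$ by the Sobolev/CKN embedding since $\|w_n\|\to0$. This yields $e(u_n)\geq \tfrac{c_0}{2}\|w_n\|_{H^1(\mathcal C)}^2 + o(\|w_n\|_{H^1(\mathcal C)}^2) \sim d^2(u_n)$, contradicting $e(u_n)=o(d^2(u_n))$ and proving the proposition. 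I expect the main obstacle to be the bookkeeping in the second-order expansion of $e$ around $\mathcal{Z}$ — in particular verifying that the nearest-point parameters $(\alpha_n,s_n)$ extract \emph{exactly} the tangent directions $\{\Psi,\partial_t\Psi\}$ so that $w_n$ lands in the coercivity subspace of $Q$, and handling the case $a\geq0$ with $p=\frac{N+2}{N-2}$ (the Hardy--Sobolev regime $a=b$), where the cylinder is non-compact and one must rule out energy escaping to $t=\pm\infty$; but this is precisely what the strong convergence in Proposition~\ref{prop0001} delivers, so the argument still closes.
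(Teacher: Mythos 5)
Your proposal is correct and follows essentially the same route as the paper's proof: reduce to the case $d(u)\to 0$ via the compactness of minimizing sequences (Proposition~\ref{prop0001}), decompose $u$ at the nearest point of $\mathcal{Z}$ with orthogonality to the manifold and its tangent direction, and then combine the second-order Taylor expansion of the $L^{p+1}$ norm with the spectral gap coming from the Morse-index-one property and the Felli--Schneider nondegeneracy. The only differences are organizational (you run one contradiction argument on the cylinder, whereas the paper argues directly in $\bbr^N$ for small $d(u)$ and by contradiction for $d(u)\gtrsim 1$, and it additionally verifies attainment of the infimum defining $d^2(u)$ for arbitrary $u$), and these do not change the substance of the argument.
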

\begin{proof}
The proof mainly follows the arguments in \cite{BE1991} for the stability of the Sobolev inequality.  It is easy to see that $d^2(u)$ can be attained by some $c_0\not=0$ and $\tau_0>0$.  Indeed,
\begin{eqnarray*}
\|u-cW_\tau\|^2_{D^{1,2}_a(\bbr^N)}=\|u\|^2_{D^{1,2}_a(\bbr^N)}+c^2\|W_\tau\|^2_{D^{1,2}_a(\bbr^N)}-c\langle u,W_\tau\rangle_{D^{1,2}_a(\bbr^N)}.
\end{eqnarray*}
Thus, by taking $(c,\tau)\in(\bbr, \bbr^+)$ such that $c\langle u,W_\tau\rangle_{D^{1,2}_a(\bbr^N)}>0$ with $|c|>0$ sufficiently small, we have $d^2(u)<\|u\|^2_{D^{1,2}_a(\bbr^N)}$.
By the invariance of the norm $\|\cdot\|_{D^{1,2}_a(\bbr^N)}$ under the dilation $u_\tau(x)=\tau^{a_c-a}u(\tau x)$,
\begin{eqnarray}
\|u-cW_\tau\|^2_{D^{1,2}_a(\bbr^N)}&=&\|u\|^2_{D^{1,2}_a(\bbr^N)}+c^2\|W_\tau\|^2_{D^{1,2}_a(\bbr^N)}-c\langle u,W_\tau\rangle_{D^{1,2}_a(\bbr^N)}\label{eq0056}\\
&\geq&\|u\|^2_{D^{1,2}_a(\bbr^N)}+c^2\|W\|^2_{D^{1,2}_a(\bbr^N)}-|c|\|u\|_{D^{1,2}_a(\bbr^N)}\|W\|_{D^{1,2}_a(\bbr^N)}.\notag
\end{eqnarray}
Thus, the minimizing sequence of $d^2(u)$, say $\{(c_n,\tau_n)\}$, must satisfy $|c_n|\sim1$.  On the other hand,
\begin{eqnarray*}
|\int_{|\tau x|\leq \rho}|x|^{-2a}\nabla u\nabla W_\tau|&\leq&\int_{|y|\leq\rho}|y|^{-2a}|\nabla u_{\frac{1}{\tau}}(y)\nabla W(y)|\notag\\
&\leq&\|u\|_{D^{1,2}_a(\bbr^N)}\bigg(\int_{|y|\leq\rho}|y|^{-2a}|\nabla W|^2\bigg)^{\frac12}\notag\\
&=&o_\rho(1)
\end{eqnarray*}
as $\rho\to0$ which is uniformly for $\tau>0$ and
\begin{eqnarray*}
|\int_{|\tau x|\geq \rho}|x|^{-2a}\nabla u\nabla W_\tau|
&\leq&\|W\|_{D^{1,2}_a(\bbr^N)}\bigg(\int_{|x|\geq \frac{\rho}{\tau}}|x|^{-2a}|\nabla u|^2\bigg)^{\frac12}\notag\\
&=&o_\tau(1)
\end{eqnarray*}
as $\tau\to0$ for any fixed $\rho>0$.  By taking $\tau\to0$ first and $\rho\to0$ next, we have $|\int_{\bbr^N}|x|^{-2a}\nabla u\nabla W_\tau|\to0$ as $\tau\to0$.
Note that $1+|\tau x|\sim 1$ for $|\tau x|\lesssim1$, by \eqref{eq0004},
\begin{eqnarray*}
|\int_{|\tau x|\leq R}|x|^{-2a}\nabla u\nabla W_\tau|\lesssim\tau^{a_c-a}\|u\|_{D^{1,2}_a(\bbr^N)}\bigg(\int_0^{\frac{R}{\tau}}r^{-2a+N-1}\bigg)^{\frac{1}{2}}=o_\tau(1)
\end{eqnarray*}
as $\tau\to+\infty$ for any fixed $R>0$ and
\begin{eqnarray*}
|\int_{|\tau x|\geq R}|x|^{-2a}\nabla u\nabla W_\tau|&\leq&\int_{|y|\geq R}|y|^{-2a}|\nabla u_{\frac{1}{\tau}}(y)\nabla W(y)|\notag\\
&\leq&\|u\|_{D^{1,2}_a(\bbr^N)}\bigg(\int_{|y|\geq R}|y|^{-2a}|\nabla W|^2\bigg)^{\frac12}\notag\\
&=&o_R(1)
\end{eqnarray*}
as $R\to+\infty$  which is uniformly for $\tau>0$.  Thus, by taking $\tau\to+\infty$ first and $R\to+\infty$ next, we also have $|\int_{\bbr^N}|x|^{-2a}\nabla u\nabla W_\tau|\to0$ as $\tau\to+\infty$.  It follows from \eqref{eq0056} and $d^2(u)<\|u\|^2_{D^{1,2}_a(\bbr^N)}$ that the minimizing sequence $\{(c_n,\tau_n)\}$ must satisfy $|\tau_n|\sim1$.  Thus, $d^2(u)$ can be attained by some $c_0\not=0$ and $\tau_0>0$, which implies
\begin{eqnarray*}
\langle u, c_0W_{\tau_0}\rangle_{D^{1,2}_a(\bbr^N)}=\|c_0W_{\tau_0}\|_{D^{1,2}_a(\bbr^N)}^2\quad\text{and}\quad\langle u, \partial_{\tau}W_{\tau}|_{\tau=\tau_0}\rangle_{D^{1,2}_a(\bbr^N)}=0.
\end{eqnarray*}
Note that
\begin{eqnarray*}
\mathcal{T}_{W_{\tau_0}}\mathcal{Z}=\text{span}\{\partial_{\tau}W_{\tau}|_{\tau=\tau_0}\},
\end{eqnarray*}
and $\partial_{\tau}W_{\tau}|_{\tau=\tau_0}=V_{\tau_0}$, where $V(x)$ is given by \eqref{eq0010}.  Thus, by the nondegneracy of $W_{\tau_0}$ in $D^{1,2}_a(\bbr^N)$,
\begin{eqnarray}\label{eq0057}
u=c_0W_{\tau_0}+\phi_{\tau_0}
\end{eqnarray}
in $D^{1,2}_a(\bbr^N)$, where
\begin{eqnarray}\label{eq0058}
\langle \phi_{\tau_0}, W_{\tau_0}\rangle_{D^{1,2}_a(\bbr^N)}=\langle \phi_{\tau_0}, V_{\tau_0}\rangle_{D^{1,2}_a(\bbr^N)}=0.
\end{eqnarray}
It follows that $d^2(u)=\|\phi_{\tau_0}\|_{D^{1,2}_a(\bbr^N)}^2$.  Since $W_{\tau_0}$ is the ground state, the Morse index of $W_{\tau_0}$ is equal to $1$.  It follows from the nondegneracy of $W_{\tau_0}$ in $D^{1,2}_a(\bbr^N)$ that
\begin{eqnarray}\label{eq0011}
\|\phi_{\tau_0}\|_{D^{1,2}_a(\bbr^N)}^2>p\int_{\bbr^N}|x|^{-b(p+1)}W_{\tau_0}^{p-1}\phi_{\tau_0}^2.
\end{eqnarray}
Let us first consider the case that $d(u)>0$ is sufficiently small, then by the elementary inequality
\begin{eqnarray*}
\bigg||\alpha+\beta|^q-|\alpha|^q-q|\alpha|^{q-2}\alpha\beta-\frac{q(q-1)}{2}|\alpha|^{q-2}\beta^2\bigg|\lesssim|\beta|^{q}+|\alpha|^{q-3}|\beta|^3\chi_{q\geq3}
\end{eqnarray*}
for $q>2$, where $\chi_{q\geq3}=1$ for $q\geq3$ and $\chi_{q\geq3}=0$ for $2<q<3$, and the CKN inequality~\eqref{eq0001},
\begin{eqnarray*}
\|W_{\tau_0}+\frac{\phi_{\tau_0}}{c_0}\|^{p+1}_{L^{p+1}(|x|^{-b(p+1)},\bbr^N)}&=&\|W_{\tau_0}\|^{p+1}_{L^{p+1}(|x|^{-b(p+1)},\bbr^N)}+o(d^{2}(u))\\
&&+(p+1)\int_{\bbr^N}|x|^{-b(p+1)}W_{\tau_0}^{p}\frac{\phi_{\tau_0}}{c_0}\notag\\
&&+\frac{p(p+1)}{2}\int_{\bbr^N}|x|^{-b(p+1)}W_{\tau_0}^{p-1}(\frac{\phi_{\tau_0}}{c_0})^2,
\end{eqnarray*}
which, together with $\langle \phi_{\tau_0}, W_{\tau_0}\rangle_{D^{1,2}_a(\bbr^N)}=0$ and the fact that $W_{\tau_0}$ is a solution of \eqref{eq0018}, implies that
\begin{eqnarray*}
\|W_{\tau_0}+\frac{\phi_{\tau_0}}{c_0}\|^{p+1}_{L^{p+1}(|x|^{-b(p+1)},\bbr^N)}&=&\|W_{\tau_0}\|^{p+1}_{L^{p+1}(|x|^{-b(p+1)},\bbr^N)}+o(d^2(u))\\
&&+\frac{p(p+1)}{2}\int_{\bbr^N}|x|^{-b(p+1)}W_{\tau_0}^{p-1}(\frac{\phi_{\tau_0}}{c_0})^2.
\end{eqnarray*}
On the other hand, by the fact that $W_{\tau_0}$ is a solution of \eqref{eq0018} and it is also a minimizer of \eqref{eq0002}, we have
\begin{eqnarray*}
C_{a,b,N}^{-1}=\frac{\|W_{\tau_0}\|^2_{D^{1,2}_a(\bbr^N)}}{\|W_{\tau_0}\|^2_{L^{p+1}(|x|^{-b(p+1)},\bbr^N)}}=\|W_{\tau_0}\|^{p-1}_{L^{p+1}(|x|^{-b(p+1)},\bbr^N)}.
\end{eqnarray*}
It follows from \eqref{eq0057}, \eqref{eq0058} and \eqref{eq0011} that for $d(u)>0$ is sufficiently small,
\begin{eqnarray*}\label{eq0012}
e(u)&:=&\|u\|^2_{D^{1,2}_a(\bbr^N)}-C_{a,b,N}^{-1}\|u\|^2_{L^{p+1}(|x|^{-b(p+1)},\bbr^N)}\notag\\
&=&c_0^2\bigg(\|\frac{\phi_{\tau_0}}{c_0}\|^2_{D^{1,2}_a(\bbr^N)}-p\int_{\bbr^N}|x|^{-b(p+1)}W_{\tau_0}^{p-1}(\frac{\phi_{\tau_0}}{c_0})^2+o(d^2(u))\bigg)\notag\\
&\gtrsim&d^2(u).
\end{eqnarray*}
It remains to consider the case $d(u)\gtrsim1$.  Assume that $e(u)\gtrsim d^2(u)$ does not hold for all $u\in D^{1,2}_a(\bbr^N)$.  Then there exists $\{u_n\}\subset D^{1,2}_a(\bbr^N)\backslash\{0\}$ such that $e(u_n)=o(d^2(u_n))$.  Thus, $e(u_n)\to0$ as $n\to\infty$ in this case.  It follows that $\{u_n\}$ is a minimizing sequence of \eqref{eq0002}.  By Proposition~\ref{prop0001}, we have $d(u_n)\to0$ as $n\to\infty$, which is a contradiction.
\end{proof}

\vskip0.2in

We close this section by the proof of Theorem~\ref{thm0001}.

\vskip0.12in

\noindent\textbf{Proof of Theorem~\ref{thm0001}:} It follows immediately from Proposition~\ref{prop0003}.
\hfill$\Box$

\section{Stability of profile decompositions to nonnegative functions}
\subsection{The one-bubble case}
In this section, we will consider the one-bubble case and prove the following result.
\begin{proposition}\label{propn0001}
Let $v\in H^1(\mathcal{C})$ be nonnegative such that
\begin{eqnarray*}
\frac12(C_{a,b,N}^{-1})^{\frac{p+1}{p-1}}<\|v\|_{H^1(\mathcal{C})}^2<\frac32(C_{a,b,N}^{-1})^{\frac{p+1}{p-1}}\quad\text{and}\quad \|f\|_{H^{-1}(\mathcal{C})}\leq\delta
\end{eqnarray*}
for $\delta>0$ sufficiently small, where $f=-\Delta_{\theta}v-\partial_t^2v+(a_c-a)^2v-v^{p}$.  Then either for $b_{FS}(a)\leq b<a+1$ with $a<0$ or for $a\leq b<a+1$ with $a\geq0$ and $a+b>0$, we have
\begin{eqnarray}\label{eqn20001}
d_0(v)\lesssim\|f\|_{H^{-1}(\mathcal{C})}
\end{eqnarray}
where $d_0^2(v)=\inf_{s\in\bbr}\|v-\Psi_s\|^2_{H^1(\mathcal{C})}$.
\end{proposition}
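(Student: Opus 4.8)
The plan is to follow the Bianchi--Egnell / Ciraolo--Figalli--Maggi strategy, but carried out on the cylinder $\mathcal{C}=\bbr\times\mathbb{S}^{N-1}$, where the dilation family $W_\tau$ becomes the pure translation $\Psi_s(t)=\Psi(t-\log s)$; this is precisely what makes all the linear estimates below automatically uniform in $s$. Write $\mathcal{J}(w)=\frac12\|w\|^2_{H^1(\mathcal{C})}-\frac1{p+1}\|w\|^{p+1}_{L^{p+1}(\mathcal{C})}$, so that (using $v\ge0$) the hypothesis reads $\mathcal{J}'(v)=f$ with $\|f\|_{H^{-1}(\mathcal{C})}\le\delta$ and $\|v\|^2_{H^1(\mathcal{C})}\in\big(\frac12(C_{a,b,N}^{-1})^{\frac{p+1}{p-1}},\frac32(C_{a,b,N}^{-1})^{\frac{p+1}{p-1}}\big)$. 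The argument has a qualitative part (forcing $d_0(v)$ small when $\delta$ is small) and a quantitative part (linearization plus a spectral estimate). First I would prove the qualitative smallness: for every $\eta>0$ there is $\delta_0>0$ such that $\|f\|_{H^{-1}(\mathcal{C})}\le\delta_0$ implies $d_0(v)\le\eta$. If not, pick $v_n\ge0$ in the energy window with $\|f_n\|_{H^{-1}(\mathcal{C})}\to0$ but $d_0(v_n)\ge\eta_0>0$; transporting back to $\bbr^N$ via \eqref{eq0007}, the corresponding functions form a nonnegative $(PS)$ sequence of $\mathcal{E}$ whose $D^{1,2}_a$-norm squared lies in $\big(\frac12(C_{a,b,N}^{-1})^{\frac{p+1}{p-1}},\frac32(C_{a,b,N}^{-1})^{\frac{p+1}{p-1}}\big)$, so Proposition~\ref{prop0002} with $\nu=1$ gives $\tau_n>0$ with $w_n=(W)_{\tau_n}+o_n(1)$ in $D^{1,2}_a(\bbr^N)$, i.e.\ $v_n=\Psi_{s_n}+o_n(1)$ in $H^1(\mathcal{C})$, contradicting $d_0(v_n)\ge\eta_0$. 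This is the one place the assumption $b\ge b_{FS}(a)$ enters, through the uniqueness of the positive profile $\Psi$ built into Proposition~\ref{prop0002}.

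Next, assuming $\delta$ small enough that $d_0(v)\le\eta$ with $\eta^2<\frac12\|\Psi\|^2_{H^1(\mathcal{C})}$, I would select the optimal translation parameter. The map $s\mapsto\|v-\Psi_s\|^2_{H^1(\mathcal{C})}$ is continuous and, since $\Psi$ decays exponentially, $\langle v,\Psi_s\rangle_{H^1(\mathcal{C})}\to0$ as $s\to0^+$ and as $s\to+\infty$, so it tends to $\|v\|^2_{H^1(\mathcal{C})}+\|\Psi\|^2_{H^1(\mathcal{C})}>\eta^2$ at both ends; hence the infimum defining $d_0(v)$ is attained at some $s_0\in(0,+\infty)$. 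Put $w:=v-\Psi_{s_0}$, so $\|w\|_{H^1(\mathcal{C})}=d_0(v)$, and from $\frac{d}{ds}\|v-\Psi_s\|^2_{H^1(\mathcal{C})}\big|_{s=s_0}=0$ together with $\partial_s\Psi_s|_{s=s_0}=-\tfrac1{s_0}\Psi'_{s_0}$ (where $\Psi'_{s_0}(t)=\Psi'(t-\log s_0)$) one gets the orthogonality $\langle w,\Psi'_{s_0}\rangle_{H^1(\mathcal{C})}=0$.

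Subtracting the equation satisfied by $\Psi_{s_0}$ from the one for $v$ gives
\[
L_{s_0}w:=-\Delta_\theta w-\partial_t^2 w+(a_c-a)^2 w-p\Psi_{s_0}^{p-1}w=f+N(w),
\]
with $N(w)=(\Psi_{s_0}+w)^p-\Psi_{s_0}^p-p\Psi_{s_0}^{p-1}w$. By the nondegeneracy recalled around \eqref{eq0016}--\eqref{eq0026} (from \cite{FS2003}), $\ker L_{s_0}=\mathrm{span}\{\Psi'_{s_0}\}$ in $H^1(\mathcal{C})$; moreover $L_{s_0}=A-p\Psi_{s_0}^{p-1}\cdot$ with $A=-\Delta_\theta-\partial_t^2+(a_c-a)^2\colon H^1(\mathcal{C})\to H^{-1}(\mathcal{C})$ an isomorphism and multiplication by $p\Psi_{s_0}^{p-1}$ (exponentially decaying) compact, so $L_{s_0}$ is a self-adjoint Fredholm operator of index $0$, and its restriction to $(\mathrm{span}\{\Psi'_{s_0}\})^{\perp_{H^1(\mathcal{C})}}$ is an isomorphism onto $\{g:\langle g,\Psi'_{s_0}\rangle=0\}$ with inverse bounded \emph{independently of $s_0$}, because $L_{s_0}$ is merely a $t$-translate of $L_1$. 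Since $w$ lies in that subspace, $\|w\|_{H^1(\mathcal{C})}\lesssim\|L_{s_0}w\|_{H^{-1}(\mathcal{C})}\le\|f\|_{H^{-1}(\mathcal{C})}+\|N(w)\|_{H^{-1}(\mathcal{C})}$. For the remainder I would use $\|N(w)\|_{H^{-1}(\mathcal{C})}\lesssim\|N(w)\|_{L^{(p+1)/p}(\mathcal{C})}$ together with the pointwise bounds $|N(w)|\lesssim\Psi_{s_0}^{p-2}w^2+|w|^p$ for $p\ge2$ and $|N(w)|\lesssim|w|^p$ for $1<p<2$, and then Hölder, $\|\Psi\|_{L^\infty(\mathcal{C})}+\|\Psi\|_{L^{p+1}(\mathcal{C})}<\infty$, and the embedding $H^1(\mathcal{C})\hookrightarrow L^{p+1}(\mathcal{C})$, to conclude $\|N(w)\|_{H^{-1}(\mathcal{C})}\lesssim\|w\|_{H^1(\mathcal{C})}^{1+\alpha}$ with $\alpha:=\min\{1,p-1\}>0$. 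Thus $\|w\|_{H^1(\mathcal{C})}\le C\|f\|_{H^{-1}(\mathcal{C})}+C\|w\|_{H^1(\mathcal{C})}^{1+\alpha}$, and since by the qualitative step $\|w\|_{H^1(\mathcal{C})}=d_0(v)$ can be made so small that $C\|w\|_{H^1(\mathcal{C})}^\alpha\le\frac12$, the last term is absorbed, giving $d_0(v)=\|w\|_{H^1(\mathcal{C})}\le 2C\|f\|_{H^{-1}(\mathcal{C})}$, which is \eqref{eqn20001}.

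The main obstacle is the uniform-in-$s_0$ invertibility of $L_{s_0}$ on the orthogonal complement of its kernel; on the cylinder this is essentially free, since translation invariance reduces it to the single nondegenerate operator $L_1$ — which is exactly why the proposition is formulated on $\mathcal{C}$ rather than on $\bbr^N$. The only genuinely delicate computation is the remainder estimate in the subquadratic regime $1<p<2$ (i.e.\ $a$ close to $b$ and $N$ large), where a second-order Taylor expansion of $t\mapsto t^p$ is unavailable and one must instead rely on the $|w|^p$-type bound; this still yields a superlinear exponent $1+\alpha$ and the absorption argument goes through unchanged.
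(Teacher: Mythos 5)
Your proposal is correct, and it reaches \eqref{eqn20001} by a route that differs from the paper's in the key linear step. The paper minimizes over the \emph{two}-parameter family $c\Psi_s$, writes $v=c_0\Psi_{s_0}+\psi_0$ with $\psi_0$ orthogonal to both $\Psi_{s_0}$ and $\Psi_{s_0}'$, and then exploits the Morse-index-one spectral gap (coercivity of the quadratic form $\|\cdot\|_{H^1}^2-p\int\Psi_{s_0}^{p-1}(\cdot)^2$ on $\{\Psi_{s_0},\Psi_{s_0}'\}^{\perp}$) to get $\|\psi_0\|_{H^1(\mathcal{C})}\lesssim\|f\|_{H^{-1}(\mathcal{C})}$; this forces an additional step estimating $|c_0-1|\lesssim\|f\|_{H^{-1}(\mathcal{C})}$ from the two energy identities before the distance to the one-parameter family $\{\Psi_s\}$ can be recovered. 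You instead keep the natural one-parameter projection $v=\Psi_{s_0}+w$ with the single orthogonality $\langle w,\Psi_{s_0}'\rangle_{H^1(\mathcal{C})}=0$, and replace coercivity by the bounded invertibility of the Fredholm operator $L_{s_0}$ on $(\ker L_{s_0})^{\perp}$ — which needs only the nondegeneracy from \cite{FS2003} and tolerates the negative eigenvalue — followed by absorption of $\|N(w)\|_{H^{-1}(\mathcal{C})}\lesssim\|w\|_{H^1(\mathcal{C})}^{1+\min\{1,p-1\}}$. This is in fact closer to the original scheme of \cite{CFM2017} than the paper's own execution, and it is slightly leaner (no $c_0$ to control); the paper's coercivity route has the advantage of reusing verbatim the Bianchi--Egnell machinery already set up in Proposition~\ref{prop0003}. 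Both proofs use Proposition~\ref{prop0002} identically for the qualitative smallness of $d_0(v)$, and your treatment of the attainment of $s_0$, the uniformity in $s_0$ via translation invariance on $\mathcal{C}$, and the subquadratic remainder for $1<p<2$ are all sound.
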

\begin{proof}
We shall mainly adapt the ideas in \cite{CFM2017} to prove this proposition.  As that in the proof of Proposition~\ref{prop0003},
\begin{eqnarray*}
\widetilde{d}_0(v)=\inf_{c\in\bbr,\ \ s\in\bbr}\|v-c\Psi_s\|_{H^1(\mathcal{C})}
\end{eqnarray*}
is attained by some $c_0\not=0$ and $s_0\in\bbr$, which implies that $v=c_0\Psi_{s_0}+\psi_0$ and $\widetilde{d}_0(v)=\|\psi_0\|_{H^1(\mathcal{C})}$, where
\begin{eqnarray}\label{eqn20000}
\langle \Psi_{s_0}, \psi_0\rangle_{H^1(\mathcal{C})}=0\quad\text{and}\quad \langle \Psi_{s_0}', \psi_0\rangle_{H^1(\mathcal{C})}=0.
\end{eqnarray}
By Proposition~\ref{prop0002}, we also have that $\|\psi_0\|_{H^1(\mathcal{C})}\to0$ and $c_0=1+\alpha_0$ with $\alpha_0\to0$ as $\delta\to0$.  Now, by the orthogonal condition~\eqref{eqn20000},
\begin{eqnarray*}
\|\psi_0\|_{H^1(\mathcal{C})}^2=\langle\psi_0, v\rangle_{H^1(\mathcal{C})}=\int_{\mathcal{C}}v^{p}\psi_0+\int_{\mathcal{C}}f\psi_0.
\end{eqnarray*}
By the Taylor expansion and some elementary inequalities,
\begin{eqnarray*}
\int_{\mathcal{C}}v^{p}\psi_0=c_0^{p}\int_{\mathcal{C}}\Psi_{s_0}^{p}\psi_0+pc_0^{p-1}\int_{\mathcal{C}}\Psi_{s_0}^{p-1}\psi_0^2+O(\|\psi_0\|_{H^1(\mathcal{C})}^{\sigma_p+1}),
\end{eqnarray*}
where $\sigma_p=2$ for $p\geq2$ and $\sigma_p=p$ for $1<p<2$.  Since $\Psi$ is a solution of \eqref{eq0006},
\begin{eqnarray*}
\int_{\mathcal{C}}v^{p}\psi_0=pc_0^{p-1}\int_{\mathcal{C}}\Psi_{s_0}^{p-1}\psi_0^2+O(\|\psi_0\|_{H^1(\mathcal{C})}^{\sigma_p+1}).
\end{eqnarray*}
Note that $\Psi$ is the ground state of \eqref{eq0006}, thus, the Morse index of $\Psi$ is equal to $1$.  It follows from the orthogonal condition~\eqref{eqn20000} and the nondegeneracy of $\Psi$ in $H^1(\mathcal{C})$ that
\begin{eqnarray*}
p\int_{\mathcal{C}}\Psi_{s_0}^{p-1}\psi_0^2<\|\psi_0\|_{H^1(\mathcal{C})}^2.
\end{eqnarray*}
Thus, by $\|\psi_0\|_{H^1(\mathcal{C})}\to0$ and $c_0=1+\alpha_0$ with $\alpha_0\to0$ as $\delta\to0$,
\begin{eqnarray}\label{eqn5555}
\|\psi_0\|_{H^1(\mathcal{C})}\lesssim\|f\|_{H^{-1}(\mathcal{C})}
\end{eqnarray}
for $\delta>0$ sufficiently small.
On the other hand, we have
\begin{eqnarray*}
\|v\|_{H^1(\mathcal{C})}^2=\|v\|_{L^{p+1}(\mathcal{C})}^{p+1}+\int_{\mathcal{C}}fv.
\end{eqnarray*}
Since $c_0=1+\alpha_0$ with $\alpha_0\to0$ as $\delta\to0$, by the orthogonal condition~\eqref{eqn20000},
\begin{eqnarray*}
\|v\|_{H^1(\mathcal{C})}^2=(1+2\alpha_0+O(\alpha_0^2))\|\Psi\|_{H^1(\mathcal{C})}^2+\|\psi_0\|_{H^1(\mathcal{C})}^2.
\end{eqnarray*}
By the Taylor expansion, the orthogonal condition~\eqref{eqn20000} and some elementary inequalities,
\begin{eqnarray*}
\|v\|_{L^{p+1}(\mathcal{C})}^{p+1}=(1+(p+1)\alpha_0+O(\alpha_0^2))\|\Psi\|_{L^{p+1}(\mathcal{C})}^{p+1}+O(\|\psi_0\|_{H^1(\mathcal{C})}^{2}),
\end{eqnarray*}
which, together with \eqref{eqn5555} and the fact that $\Psi$ is a solution of \eqref{eq0006}, implies that
\begin{eqnarray*}
(p-1)|\alpha_0|\lesssim\|f\|_{H^{-1}(\mathcal{C})}^2+\|f\|_{H^{-1}(\mathcal{C})}\sim\|f\|_{H^{-1}(\mathcal{C})}
\end{eqnarray*}
for $\delta>0$ sufficiently small.
Now, \eqref{eqn20001} then follows from rewriting $v=\Psi_{s_0}+\alpha_0\Psi_{s_0}+\psi_0$.
\end{proof}

\subsection{The multi-bubble case}
Let us first compute the interaction of two bubbles, which plays an important role in the stability for the multi-bubble case.
\begin{lemma}\label{lem0001}
Let $W_{\tau_1}$ and $W_{\tau_2}$ be two bubbles such that $\tau_1\not=\tau_2$.  Then
\begin{eqnarray*}
\langle W_{\tau_1}, W_{\tau_2}\rangle_{D^{1,2}_a(\bbr^N)}\sim\bigg(\frac{\min\{\tau_1,\tau_2\}}{\max\{\tau_1,\tau_2\}}\bigg)^{a_c-a}.
\end{eqnarray*}
Moreover, by the transformation~\eqref{eq0007}, we also have
\begin{eqnarray}\label{eq0025}
\langle \Psi_{s_1}, \Psi_{s_2}\rangle_{H^{1}(\mathcal{C})}\sim e^{-(a_c-a)|s_1-s_2|},
\end{eqnarray}
where $\Psi(t)$ is given by \eqref{eq0026}, $\Psi_s(t)=\Psi(t-s)$ and $s_i=\ln\tau_i$.
\end{lemma}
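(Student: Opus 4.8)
The plan is to compute $\langle W_{\tau_1},W_{\tau_2}\rangle_{D^{1,2}_a}$ directly, exploiting the scaling invariance of the inner product to reduce to a one-parameter family, and then to transfer the estimate to the cylinder via \eqref{eq0007}. First I would use the dilation invariance of $\langle\cdot,\cdot\rangle_{D^{1,2}_a}$ under $u\mapsto u_\tau$: since $\langle W_{\tau_1},W_{\tau_2}\rangle_{D^{1,2}_a}=\langle W_{\tau_1/\tau_2},W\rangle_{D^{1,2}_a}$, it suffices to estimate $I(\lambda):=\langle W_\lambda,W\rangle_{D^{1,2}_a}$ as $\lambda\to0$ and $\lambda\to\infty$, and then observe that $I(\lambda)=I(1/\lambda)$ by symmetry of the pairing together with the invariance, so we may as well take $\lambda=\min\{\tau_1,\tau_2\}/\max\{\tau_1,\tau_2\}\le 1$ and show $I(\lambda)\sim\lambda^{a_c-a}$.

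Next I would turn the pairing into an integral against the right-hand side of the Euler–Lagrange equation: since $W$ solves \eqref{eq0018}, we have
\begin{eqnarray*}
\langle W_\lambda,W\rangle_{D^{1,2}_a}=\int_{\bbr^N}|x|^{-b(p+1)}W^{p}\, (W_\lambda)\,dx
=\lambda^{a_c-a}\int_{\bbr^N}|x|^{-b(p+1)}W(x)^{p}\,W(\lambda x)\,dx,
\end{eqnarray*}
using the homogeneity $b(p+1)=2a+2$ which makes the weight transform correctly under dilation (this identity is exactly where the CKN parameter relation $p=\frac{N+2(1+a-b)}{N-2(1+a-b)}$ enters). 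Now from the explicit form \eqref{eq0004}, $W(\lambda x)\to W(0)=\big(2(p+1)(a_c-a)^2\big)^{1/(p-1)}$ as $\lambda\to0$, locally uniformly, while $\int_{\bbr^N}|x|^{-b(p+1)}W^p\,dx<\infty$ (this is just the statement that $W^p\in (D^{1,2}_a)'$, guaranteed by the CKN inequality). So by dominated convergence the integral factor converges to a positive constant, giving $I(\lambda)\sim\lambda^{a_c-a}$. To make the domination rigorous I would split the integral into $|x|\le 1/\sqrt\lambda$ and $|x|\ge 1/\sqrt\lambda$; on the inner region $W(\lambda x)$ is bounded and the integrand is dominated by $|x|^{-b(p+1)}W^p$, and on the outer region the tail of $\int|x|^{-b(p+1)}W^p$ together with the decay $W(\lambda x)\lesssim (\lambda|x|)^{-2(a_c-a)}$ shows the contribution is $o(1)$ times the main term. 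This yields the two-sided bound with constants independent of $\lambda$.

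Finally, for \eqref{eq0025} I would simply push the first estimate through the isometry \eqref{eq0007}: under $u(x)=|x|^{-(a_c-a)}v(-\ln|x|,x/|x|)$ the dilation $\tau^{a_c-a}u(\tau x)$ corresponds to the translation $v(t-\ln\tau,\theta)$, so $\langle\Psi_{s_1},\Psi_{s_2}\rangle_{H^1(\mathcal C)}=\langle W_{\tau_1},W_{\tau_2}\rangle_{D^{1,2}_a}$ with $s_i=\ln\tau_i$, and the right-hand side $\big(\min/\max\big)^{a_c-a}$ becomes $e^{-(a_c-a)|s_1-s_2|}$. Alternatively one can re-derive \eqref{eq0025} intrinsically on the cylinder from the explicit $\Psi(t)=C_0(\cosh(\cdot))^{-2/(p-1)}$ in \eqref{eq0026}, which decays like $e^{-(a_c-a)|t|}$: testing the equation \eqref{eq0006} gives $\langle\Psi_{s_1},\Psi_{s_2}\rangle_{H^1}=\int_{\mathcal C}\Psi_{s_1}^p\Psi_{s_2}$, and the same dominated-convergence split around the concentration scale gives the exponential rate. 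The main obstacle, such as it is, is bookkeeping the weight exponents carefully to confirm that the dilation really is an isometry for the weighted pairing and that the convolution-type integral $\int|x|^{-b(p+1)}W^p W(\lambda\cdot)$ is genuinely bounded above and below by the $\lambda^{a_c-a}$ scaling — i.e. ruling out extra logarithmic losses — which is where the decay rate of $W$ versus the singularity of the weight must be compared; both the slowest-decaying interaction and the integrability of $|x|^{-b(p+1)}W^p$ hinge on $a<a_c$ and the explicit exponent $(a_c-a)(p-1)$ in \eqref{eq0004}.
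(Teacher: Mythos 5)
Your proposal is correct, and it shares the paper's key first step: since $W$ solves \eqref{eq0018}, the inner product reduces to $\int_{\bbr^N}|x|^{-b(p+1)}W^{p}W_{\tau}\,dx$, and by dilation invariance one may take $\tau_1=1$, $\tau_2=\lambda\le 1$. Where you diverge is in estimating this integral: the paper splits $\bbr^N$ into the three regions $\{|x|\le1\}$, $\{1<|x|\le 1/\lambda\}$, $\{|x|>1/\lambda\}$, replaces $W$ and $W_\lambda$ by their power-law asymptotics in each, and computes the resulting radial integrals (verifying the sign conditions $N-b(p+1)=(p+1)(a_c-a)>0$ and $(1-p)(a_c-a)<0$), whereas you factor out $\lambda^{a_c-a}$ and apply dominated convergence to get $\lambda^{-(a_c-a)}I(\lambda)\to W(0)\int|x|^{-b(p+1)}W^p>0$. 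Your route is cleaner and even identifies the sharp constant in the $\lambda\to0$ asymptotics; to get the uniform two-sided bound claimed in the lemma for all $\lambda\in(0,1)$ you should add the (trivial) remark that $\lambda\mapsto\lambda^{-(a_c-a)}I(\lambda)$ is continuous and positive on any compact subset of $(0,1]$, which the paper's region-by-region power counting delivers automatically. Two small corrections to your write-up: the parenthetical identity $b(p+1)=2a+2$ is false (e.g.\ it fails for $a=b=0$); the identity actually encoded by the CKN exponent relation is $N-b(p+1)=(p+1)(a_c-a)$, and in any case the step $\int|x|^{-b(p+1)}W^pW_\lambda=\lambda^{a_c-a}\int|x|^{-b(p+1)}W^pW(\lambda\cdot)$ needs no weight homogeneity at all, only the definition $W_\lambda(x)=\lambda^{a_c-a}W(\lambda x)$. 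Also, the finiteness of $\int|x|^{-b(p+1)}W^p\,dx$ does not follow from ``$W^p\in(D^{1,2}_a)'$'' (the constant function $1$ is not in $D^{1,2}_a$); it must be, and easily is, checked directly from $W\sim1$ near $0$ and $W\sim|x|^{-2(a_c-a)}$ at infinity together with $p>1$, exactly the decay-versus-singularity comparison you flag at the end. The transfer to the cylinder via the isometry \eqref{eq0007} is identical in both arguments.
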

\begin{proof}
Without loss of generality, we may assume that $\tau_1=1$ and $\tau_2:=\tau<1$ by the invariance of $\langle \cdot, \cdot\rangle_{D^{1,2}_a(\bbr^N)}$ under the dilation $u_\tau=\tau^{a_c-a}u(\tau x)$.  Since $W$ is a solution of \eqref{eq0018},
\begin{eqnarray*}
\langle W, W_{\tau}\rangle_{D^{1,2}_a(\bbr^N)}&=&\int_{\bbr^N}|x|^{-b(p+1)}W^{p}W_\tau\\
&=&\int_{|x|\leq1}|x|^{-b(p+1)}W^{p}W_\tau+\int_{1<|x|\leq\frac{1}{\tau}}|x|^{-b(p+1)}W^{p}W_\tau\\
&&+\int_{\frac{1}{\tau}<|x|}|x|^{-b(p+1)}W^{p}W_\tau.
\end{eqnarray*}
Since $\tau<1$, by \eqref{eq0004}, $W(x)\sim1$ and $W_\tau(x)\sim\tau^{a_c-a}$ in the region $\{x\in\bbr^N\mid|x|\leq1\}$.  It follows that
\begin{eqnarray*}
\int_{|x|\leq1}|x|^{-b(p+1)}W^{p}W_\tau\sim\tau^{a_c-a}\int_0^1 r^{N-1-b(p+1)}\sim\tau^{a_c-a}.
\end{eqnarray*}
Here, we have used the fact that $N-b(p+1)=(p+1)(a_c-a)>0$.  In the region $\{x\in\bbr^N\mid 1<|x|\leq\frac{1}{\tau}\}$, $W_\tau(x)\sim\tau^{a_c-a}$ and $W(x)\sim|x|^{-2(a_c-a)}$ by \eqref{eq0004}.  Thus,
\begin{eqnarray*}
\int_{1<|x|\leq\frac{1}{\tau}}|x|^{-b(p+1)}W^{p}W_\tau\sim\tau^{a_c-a}\int_1^{\frac{1}{\tau}} r^{N-1-b(p+1)-2p(a_c-a)}\sim\tau^{a_c-a},
\end{eqnarray*}
where we have used the fact that $N-b(p+1)-2p(a_c-a)=(1-p)(a_c-a)<0$ and $\tau<1$.  In the region $\{x\in\bbr^N\mid \frac{1}{\tau}<|x|\}$, $W_\tau(x)\sim\tau^{-(a_c-a)}|x|^{-2(a_c-a)}$ and $W(x)\sim|x|^{-2(a_c-a)}$ by \eqref{eq0004}.  Therefore,
\begin{eqnarray*}
\int_{\frac{1}{\tau}<|x|}|x|^{-b(p+1)}W^{p}W_\tau\sim\tau^{-(a_c-a)}\int_{\frac{1}{\tau}}^{+\infty} r^{N-1-b(p+1)-2(p+1)(a_c-a)}\sim\tau^{p(a_c-a)},
\end{eqnarray*}
where we have used the fact that $N-b(p+1)-2(p+1)(a_c-a)=-(p+1)(a_c-a)<0$.  Thus, by $\tau<1$ and $p>1$,
\begin{eqnarray}\label{eqn994}
\langle W, W_{\tau}\rangle_{D^{1,2}_a(\bbr^N)}\sim\tau^{a_c-a}.
\end{eqnarray}
By \eqref{eq0007}, we have $\langle \Psi, \Psi_{s}\rangle_{H^{1}(\mathcal{C})}=\langle W, W_{\tau}\rangle_{D^{1,2}_a(\bbr^N)}$, where $\Psi(t)$ is given by \eqref{eq0026}, $\Psi_s(t)=\Psi(t-s)$ and $s=\ln\tau$.  Then \eqref{eq0025} follows immediately from \eqref{eqn994}.
\end{proof}

\vskip0.12in

Let $v\in H^1(\mathcal{C})$ be nonnegative such that
\begin{eqnarray*}
(\nu-\frac12)(C_{a,b,N}^{-1})^{\frac{p+1}{p-1}}<\|v\|_{H^{1}(\mathcal{C})}^2<(\nu+\frac12)(C_{a,b,N}^{-1})^{\frac{p+1}{p-1}}
\end{eqnarray*}
for some positive integer $\nu\geq2$ and denote
\begin{eqnarray}\label{eq0060}
f:=-\Delta_{\theta}v-\partial_t^2v+(a_c-a)^2v-v^{p}.
\end{eqnarray}
Then $f\in H^{-1}(\mathcal{C})$.  As that in \cite{CFM2017,DSW2021,FG2021}, we consider the following minimizing problem:
\begin{eqnarray*}
d_*^2(v)=\min_{s_j\in\bbr}\|v-\sum_{j=1}^{\nu}\Psi_{s_j}\|^2_{H^1(\mathcal{C})}.
\end{eqnarray*}
By similar arguments as that used in the proof of Proposition~\ref{prop0003}, we can show that $d_*^2(v)$ is attained at some $\{s_j\}\in\bbr^\nu$ and thus, we can write $v=\sum_{j=1}^{\nu}\Psi_{s_j}+\rho$, where $\rho$ satisfies the following orthogonal conditions:
\begin{eqnarray}\label{eq0013}
\langle \Psi_{s_j}', \rho\rangle_{H^{1}(\mathcal{C})}=0\quad\text{for all }j=1,2,\cdots,\nu.
\end{eqnarray}
Clearly, $d_*^2(v)=\|\rho\|_{H^1(\mathcal{C})}^2$.  Moreover, by Proposition~\ref{prop0002}, we know that $d_*(v)\to0$ as $\|f\|_{H^{-1}(\mathcal{C})}\to0$ either for $b_{FS}(a)\leq b<a+1$ with $a<0$ or for $a\leq b<a+1$ with $a\geq0$ and $a+b>0$.  Thus, if $\|f\|_{H^{-1}(\mathcal{C})}\leq\delta$ for $\delta>0$ sufficiently small, we have $\|\rho\|_{H^1(\mathcal{C})}\leq\delta'$ either for $b_{FS}(a)\leq b<a+1$ with $a<0$ or for $a\leq b<a+1$ with $a\geq0$ and $a+b>0$, where $\delta'\to0$ as $\delta\to0$.

\vskip0.12in

Since $\Psi_{s_j}$ are solutions of \eqref{eq0006}, by \eqref{eq0013}, we can rewrite \eqref{eq0060} as follows:
\begin{eqnarray}\label{eq0014}
\left\{\aligned&-\Delta_{\theta}\rho-\partial_t^2\rho+(a_c-a)^2\rho=(\sum_{j=1}^{\nu}\Psi_{s_j}+\rho)^{p}-\sum_{j=1}^{\nu}\Psi_{s_j}^{p}+f,\quad \text{in }\mathcal{C},\\
&\langle \Psi_{s_j}', \rho\rangle_{H^{1}(\mathcal{C})}=0\quad\text{for all }j=1,2,\cdots,\nu.\endaligned\right.
\end{eqnarray}
In what follows, for the sake of simplicity, we denote
\begin{eqnarray*}
R=\min_{i\not=j}|s_i-s_j|\quad\text{and}\quad Q=e^{-(a_c-a)\min_{i\not=j}|s_i-s_j|}
\end{eqnarray*}
as that in \cite{DSW2021}.  Moreover, we also assume that $s_1<s_2<\cdots<s_\nu$ without loss of generality.  For the sake of simplicity, we also denote $s_0=-\infty$ and $s_{\nu+1}=+\infty$.
\begin{lemma}\label{lemn0001}
Let $b_{FS}(a)\leq b<a+1$ for $a<0$ and $a\leq b<a+1$ for $a\geq0$ and $a+b>0$.  Then
\begin{eqnarray}\label{eq1146}
\|f\|_{H^{-1}(\mathcal{C})}\gtrsim Q+O(Q^{\frac{1}{2}}\|\rho\|_{H^1(\mathcal{C})}+\|\rho\|_{H^1(\mathcal{C})}^{p+1})
\end{eqnarray}
for $\|f\|_{H^{-1}(\mathcal{C})}\leq\delta$ with $\delta>0$ sufficiently small.
\end{lemma}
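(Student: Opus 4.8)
The plan is to test the system \eqref{eq0014} against the partial sums of the shifted kernel functions,
\[
\Phi_k:=\sum_{i=1}^{k}\Psi_{s_i}',\qquad k=1,\dots,\nu-1 ,
\]
so that the $k$-th choice isolates the interaction between the $k$-th and $(k+1)$-th bubble; since $s_1<\cdots<s_\nu$ we have $R=\min_{1\le k\le\nu-1}(s_{k+1}-s_k)$ and $Q=e^{-(a_c-a)R}$, so it suffices to bound each $e^{-(a_c-a)(s_{k+1}-s_k)}$ and take the maximum over $k$. Two structural remarks make this work. Write $\mathcal L:=-\Delta_\theta-\partial_t^2+(a_c-a)^2$, so that $\langle \mathcal Lu,v\rangle_{H^{-1},H^1}=\langle u,v\rangle_{H^1}$. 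First, by the orthogonality \eqref{eq0013}, $\langle \mathcal L\rho,\Phi_k\rangle=\sum_{i\le k}\langle\rho,\Psi_{s_i}'\rangle_{H^1}=0$. Second, and crucially, since $\Psi_{s_i}'$ solves the linearized equation \eqref{eq0016} with $\Psi$ replaced by $\Psi_{s_i}$, we have $p\int_{\mathcal C}\Psi_{s_i}^{p-1}\Psi_{s_i}'\rho=\int_{\mathcal C}(\mathcal L\Psi_{s_i}')\rho=\langle\Psi_{s_i}',\rho\rangle_{H^1}=0$; this annihilates the otherwise dangerous ``diagonal'' linear term that appears precisely because $\rho$ is orthogonal to the $\Psi_{s_i}'$ but \emph{not} to the $\Psi_{s_i}$ themselves.

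Setting $V:=\sum_{j=1}^{\nu}\Psi_{s_j}$ and expanding $(V+\rho)^p=V^p+pV^{p-1}\rho+\mathcal R(\rho)$ with the Taylor remainder obeying $|\mathcal R(\rho)|\lesssim V^{p-2}\rho^2$ where $|\rho|\lesssim V$ and $|\mathcal R(\rho)|\lesssim|\rho|^{p}$ where $|\rho|\gtrsim V$, testing \eqref{eq0014} against $\Phi_k$ gives
\[
0=\int_{\mathcal C}\Big(V^p-\sum_{j=1}^{\nu}\Psi_{s_j}^p\Big)\Phi_k+p\sum_{i\le k}\int_{\mathcal C}\big(V^{p-1}-\Psi_{s_i}^{p-1}\big)\rho\,\Psi_{s_i}'+\int_{\mathcal C}\mathcal R(\rho)\Phi_k+\langle f,\Phi_k\rangle ,
\]
the diagonal pieces of the linear term having vanished by the above identity. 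For the leading integral I would integrate by parts in $t$, use $\Psi_{s_j}'=-\partial_{s_j}\Psi_{s_j}$ and $\int_{\mathcal C}\Psi_{s_i}^p\Psi_{s_j}=\langle\Psi_{s_i},\Psi_{s_j}\rangle_{H^1}$ (from \eqref{eq0006}), so that it becomes, up to higher-order-in-$Q$ interaction terms, a sum of contributions $\pm\,\partial_s\langle\Psi_{s_i},\Psi_{s_j}\rangle_{H^1}$; by the sharp asymptotics underlying Lemma~\ref{lem0001} (namely $\langle\Psi_{s_i},\Psi_{s_j}\rangle_{H^1}=c_0e^{-(a_c-a)|s_i-s_j|}(1+o(1))$ with $c_0>0$) the contributions of pairs $(i,j)$ with $i,j\le k$ cancel, and what survives is $-c_0(a_c-a)\,e^{-(a_c-a)(s_{k+1}-s_k)}(1+o(1))$, a definite multiple of the gap between bubbles $k$ and $k+1$.

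It then remains to estimate the errors. One has $|\langle f,\Phi_k\rangle|\le\|f\|_{H^{-1}(\mathcal C)}\|\Phi_k\|_{H^1(\mathcal C)}\lesssim\|f\|_{H^{-1}(\mathcal C)}$; using $|\Phi_k|\lesssim V$ (since $|\Psi'|\le(a_c-a)\Psi$) and $\int_{\mathcal C}V^{p+1}\sim1$, the term $\int_{\mathcal C}\mathcal R(\rho)\Phi_k$ is, after splitting $\mathcal C$ into $\{|\rho|\lesssim V\}$ and $\{|\rho|\gtrsim V\}$, controlled by the quantities on the right of \eqref{eq1146}; and the residual linear term $\int_{\mathcal C}(V^{p-1}-\Psi_{s_i}^{p-1})\rho\,\Psi_{s_i}'$, being weighted by the product of two distinct bubbles, is bounded by $C\,Q^{1/2}\|\rho\|_{H^1(\mathcal C)}$ by Hölder's inequality together with the pointwise bounds on $\Psi$ and Lemma~\ref{lem0001}. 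Combining, dividing by $c_0(a_c-a)$, discarding the $o(1)$ (legitimate since Proposition~\ref{prop0002} forces $R\to\infty$ and $\|\rho\|_{H^1(\mathcal C)}\to0$ as $\delta\to0$), and taking the index $k$ realizing $R$ yields $Q\lesssim\|f\|_{H^{-1}(\mathcal C)}+Q^{1/2}\|\rho\|_{H^1(\mathcal C)}+\|\rho\|_{H^1(\mathcal C)}^{p+1}$, which is \eqref{eq1146}.

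The main obstacle is the careful bookkeeping of the interaction-weighted error integrals on the cylinder — chiefly $\int_{\mathcal C}(V^{p-1}-\Psi_{s_i}^{p-1})\rho\,\Psi_{s_i}'$ and $\int_{\mathcal C}\mathcal R(\rho)\Phi_k$ — which requires splitting $\mathcal C$ into the windows $|t-s_j|\lesssim1$ around each center and the intermediate regions, since there the exponential decay of $\Psi_{s_i}^{p-1}\Psi_{s_i}'$ competes with the exponential growth of the neighbouring bubbles toward their own centers; this competition both produces the exponent $\tfrac12$ above and is the source of the dichotomy $p>2$ versus $p\le2$ in Theorem~\ref{thm0002}. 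One also needs, as preliminaries, that $d_*(v)$ is attained and that the higher-order-in-$Q$ interaction terms discarded in the leading computation are genuinely negligible for $\delta$ small.
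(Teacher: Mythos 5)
Your strategy is sound and is in fact a genuinely different implementation from the paper's. The paper tests \eqref{eq0014} against the single function $-\Psi_{s_{j_0}}'$, where $j_0$ realizes the minimal gap, and then estimates the resulting interaction integrals directly; you test against the partial sums $\Phi_k=\sum_{i\le k}\Psi_{s_i}'$ and telescope. Your telescoping is correct and, to my mind, cleaner: since $\langle\Psi_{s_i},\Psi_{s_j}\rangle_{H^1(\mathcal C)}=G(s_i-s_j)$ with $G$ even, one has the \emph{exact} cancellation $\langle\Psi_{s_i}',\Psi_{s_j}\rangle_{H^1}+\langle\Psi_{s_j}',\Psi_{s_i}\rangle_{H^1}=0$ for every pair, so only the gap $(s_k,s_{k+1})$ survives with a definite sign; this sidesteps the possible cancellation between the two gaps adjacent to $s_{j_0}$ that the single-test-function argument must confront, and the identity $p\int\Psi_{s_i}^{p-1}\Psi_{s_i}'\rho=\langle\Psi_{s_i}',\rho\rangle_{H^1}=0$ that kills the diagonal linear term is exactly the paper's use of the orthogonality \eqref{eq0013}. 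Your bounds on $\langle f,\Phi_k\rangle$ and on the off-diagonal linear term (via Cauchy--Schwarz in the Voronoi windows of half-width $R/2$ about each $s_i$ — note these, not windows $|t-s_j|\lesssim1$, are the right decomposition) match the paper's.

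There is, however, one concrete gap. On the set $\{|\rho|\lesssim V\}$ the Taylor remainder satisfies only $|\mathcal R(\rho)|\lesssim V^{p-2}\rho^2$, and since $|\Phi_k|\lesssim V$ the best absolute-value bound is $\int_{\mathcal C}V^{p-1}\rho^2\lesssim\|\rho\|_{H^1(\mathcal C)}^2$. Because $p>1$, $\|\rho\|^2$ is \emph{not} dominated by $Q^{1/2}\|\rho\|_{H^1}+\|\rho\|_{H^1}^{p+1}$, so this term is not ``controlled by the quantities on the right of \eqref{eq1146}'' as you assert. The paper avoids this by never estimating that term in absolute value: by convexity, $(V+\rho)^p-V^p-pV^{p-1}\rho\ge0$ pointwise on $\{3|\rho|<V\}$ (see \eqref{eqn19985}), and this positive quantity is \emph{discarded with a favorable sign} against its (claimed nonnegative) test function, so that only the bad region $\{V\le3|\rho|\}$ produces the $\|\rho\|^{p+1}$ error via \eqref{eqn19984}. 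Your $\Phi_k$ changes sign, so this positivity device is not available to you as written. Two repairs: either prove the slightly weaker inequality with an extra $O(\|\rho\|_{H^1}^2)$ on the right (this still suffices for Proposition~\ref{prop0005}, since there $\|\rho\|_{H^1}\lesssim Q^{\min\{1,p/2\}}|\log Q|^{1/2}+\|f\|_{H^{-1}}+Q^2$, whence $\|\rho\|^2=o(Q)+O(\|f\|^2)$), or keep your telescoping for the leading term but handle the quadratic remainder separately on each Voronoi window, where $\Phi_k$ is dominated by a single signed $\Psi_{s_i}'$ plus exponentially small corrections.
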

\begin{proof}
Suppose that $R=s_{j_0+1}-s_{j_0}$ for some $j_0\in\{1, 2,\cdots, \nu-1\}$.  Multiplying \eqref{eq0014} with $-\Psi_{s_{j_0}}'$ and integrating by parts, we have
\begin{eqnarray*}
\|f\|_{H^{-1}(\mathcal{C})}\gtrsim\langle (\sum_{j=1}^{\nu}\Psi_{s_j}+\rho)^{p}-\sum_{j=1}^{\nu}\Psi_{s_j}^{p}, -\Psi_{s_{j_0}}'\rangle_{L^2(\mathcal{C})}.
\end{eqnarray*}
In the region $\{3|\rho|<\sum_{j=1}^{\nu}\Psi_{s_j}\}$, by the Taylor expansion,
\begin{eqnarray}
(\sum_{j=1}^{\nu}\Psi_{s_j}+\rho)^{p}-(\sum_{j=1}^{\nu}\Psi_{s_j})^{p}-p(\sum_{j=1}^{\nu}\Psi_{s_j})^{p-1}\rho&=&p(p-1)(\sum_{j=1}^{\nu}\Psi_{s_j}+\xi\rho)^{p-2}\rho^2\notag\\
&\sim&(\sum_{j=1}^{\nu}\Psi_{s_j})^{p-2}\rho^2>0,\label{eqn19985}
\end{eqnarray}
where $\xi\in(0, 1)$.
In the region $\{\sum_{j=1}^{\nu}\Psi_{s_j}\leq 3|\rho|\}$, since $\Psi_{s_j}$ are all positive, we also have $\Psi_{s_{j_0}}\leq3|\rho|$ which, together with $-\Psi_{s_{j_0}}'\sim\Psi_{s_{j_0}}$, implies that
\begin{eqnarray}\label{eqn19984}
\bigg((\sum_{j=1}^{\nu}\Psi_{s_j}+\rho)^{p}-(\sum_{j=1}^{\nu}\Psi_{s_j})^{p}-p(\sum_{j=1}^{\nu}\Psi_{s_j})^{p-1}\rho\bigg)\Psi_{s_{j_0}}'\lesssim\rho^{p+1}.
\end{eqnarray}
By Proposition~\ref{prop0002} and Lemma~\ref{lem0001}, $R\to+\infty$ as $\delta\to0$.  Thus,
\begin{eqnarray*}
e^{-(a_c-a)|t-s_i|}\gtrsim e^{-(a_c-a)|t-s_{i-1}|}+e^{-(a_c-a)|t-s_{i+1}|}\sim \sum_{j\not=i}e^{-(a_c-a)|t-s_j|}
\end{eqnarray*}
in $(s_i-\frac{R}{2}+O(1),s_i+\frac{R}{2}+O(1))$ for all $i=1,2,\cdots,\nu$, which, together with \eqref{eq0026}, implies that $\{\Psi_{s_i}\gtrsim\sum_{j\not=i}\Psi_{s_j}\}$ in the region $(s_i-\frac{R}{2}+O(1),s_i+\frac{R}{2}+O(1))$ for all $i=1,2,\cdots,\nu$.  It follows from the Taylor expansion that
\begin{eqnarray}\label{eqn19999}
(\sum_{j=1}^{\nu}\Psi_{s_j})^{p}-\sum_{j=1}^{\nu}\Psi_{s_j}^{p}=p(\Psi_{s_i}+\xi_i\sum_{j\not=i}\Psi_{s_j})^{p-1}\sum_{j\not=i}\Psi_{s_j}\sim\Psi_{s_i}^{p-1}\sum_{j\not=i}\Psi_{s_j}
\end{eqnarray}
in the region $(s_i-\frac{R}{2}+O(1),s_i+\frac{R}{2}+O(1))$ for all $i=1,2,\cdots,\nu$, where $\xi_i\in(0, 1)$.  In the region $\bbr\backslash(\cup_{i=1}^{\nu}(s_i-\frac{R}{2}+O(1),s_i+\frac{R}{2}+O(1)))$, by \eqref{eq0026},
\begin{eqnarray}\label{eqn19998}
|(\sum_{j=1}^{\nu}\Psi_{s_j})^{p}-\sum_{j=1}^{\nu}\Psi_{s_j}^{p}|\lesssim \sum_{j=1}^{\nu}\Psi_{s_j}^{p}\sim \sum_{j=1}^{\nu}e^{-p(a_c-a)|t-s_j|}.
\end{eqnarray}
Thus, by $-\Psi_{s_{j_0}}'\sim\Psi_{s_{j_0}}$ once more, \eqref{eqn19998}, $p>1$ and the orthogonal conditions in \eqref{eq0014},
\begin{eqnarray*}
\|f\|_{H^{-1}(\mathcal{C})}&\gtrsim&-\int_{\mathcal{C}}((\sum_{j=1}^{\nu}\Psi_{s_j})^{p}-\sum_{j=1}^{\nu}\Psi_{s_j}^{p}+p((\sum_{j=1}^{\nu}\Psi_{s_j})^{p-1}-\Psi_{s_{j_0}}^{p-1})\rho)\Psi_{s_{j_0}}')\notag\\
&&+O(\|\rho\|_{H^1(\mathcal{C})}^{p+1})\notag\\
&\gtrsim&\sum_{i=1}^\nu\int_{(s_i-\frac{R}{2}+O(1),s_i+\frac{R}{2}+O(1))}\Psi_{s_i}^{p-1}\Psi_{s_{j_0}}\sum_{j\not=i}^{\nu}\Psi_{s_j}+o(Q)\notag\\
&&-\int_{(s_{j_0}-\frac{R}{2}+O(1),s_{j_0}+\frac{R}{2}+O(1))}\Psi_{s_{j_0}}^{p-1}\sum_{j\not=j_0}\Psi_{s_j}|\rho|\notag\\
&&-\int_{(s_{j_0}-\frac{R}{2}+O(1),s_{j_0}+\frac{R}{2}+O(1))^c}\Psi_{s_{j_0}}\sum_{j=1}^{\nu}\Psi_{s_j}^{p-1}|\rho|+O(\|\rho\|_{H^1(\mathcal{C})}^{p+1})\\
&\sim&\sum_{i=1}^{\nu}\int_{(s_i-\frac{R}{2}+O(1),s_i+\frac{R}{2}+O(1))}\Psi_{s_i}^{p-1}\Psi_{s_{j_0}}\sum_{j\not=i}^{\nu}\Psi_{s_j}+o(Q)\\
&&+O(Q^{\frac{1}{2}}\|\rho\|_{H^1(\mathcal{C})}+\|\rho\|_{H^1(\mathcal{C})}^{p+1}).
\end{eqnarray*}
By \eqref{eq0026}, $p>1$ and \cite[Lemma~4.1]{WW2020},
\begin{eqnarray*}
&&\sum_{i=1}^{\nu}\int_{(s_i-\frac{R}{2}+O(1),s_i+\frac{R}{2}+O(1))}\Psi_{s_i}^{p-1}\Psi_{s_{j_0}}\sum_{j\not=i}^{\nu}\Psi_{s_j}\\
&\sim&
\int_{(s_{j_0}-\frac{R}{2}+O(1),s_{j_0}+\frac{R}{2}+O(1))}\Psi_{s_{j_0}}^{p}(\Psi_{s_{j_0+1}}+\Psi_{s_{j_0-1}})\\
&&+\int_{(s_{j_0+1}-\frac{R}{2}+O(1),s_{j_0+1}+\frac{R}{2}+O(1))}\Psi_{s_{j_0}}^{2}\Psi_{s_{j_0+1}}^{p-1}\\
&&+\int_{(s_{j_0-1}-\frac{R}{2}+O(1),s_{j_0-1}+\frac{R}{2}+O(1))}\Psi_{s_{j_0}}^{2}\Psi_{s_{j_0-1}}^{p-1}+o(Q)\\
&\sim& \int_0^{\frac{R}{2}}e^{-(a_c-a)pr}e^{-(a_c-a)(R-r)}+\int_0^{\frac{R}{2}}e^{-2(a_c-a)(R-r)}e^{-(a_c-a)(p-1)r}+o(Q)\\
&\sim&Q.
\end{eqnarray*}
It follows that \eqref{eq1146} holds for $\delta>0$ sufficiently small.
\end{proof}

\vskip0.12in

As that in \cite{DSW2021}, we want to drive the precise behavior of first approximation of $\rho$ by considering the following equation:
\begin{eqnarray}\label{eq0015}
\left\{\aligned&-\Delta_{\theta}\phi-\partial_t^2\phi+(a_c-a)^2\phi\\
&=|\sum_{j=1}^{\nu}\Psi_{s_j}+\phi|^{p-1}(\sum_{j=1}^{\nu}\Psi_{s_j}+\phi)\\
&-\sum_{j=1}^{\nu}\Psi_{s_j}^{p}+\sum_{j=1}^{\nu}c_j\Psi_{s_j}^{p-1}\Psi_{s_j}',\quad \text{in }\mathcal{C},\\
&\langle \Psi_{s_j}', \phi\rangle_{H^{1}(\mathcal{C})}=0\quad\text{for all }j=1,2,\cdots,\nu,\endaligned\right.
\end{eqnarray}
where $c_j$ and $\phi$ are all unknowns.  By \eqref{eq0026} and some elementary inequalities, we can rewrite
\begin{eqnarray*}
|\sum_{j=1}^{\nu}\Psi_{s_j}+\phi|^{p-1}(\sum_{j=1}^{\nu}\Psi_{s_j}+\phi)-\sum_{j=1}^{\nu}\Psi_{s_j}^{p}&=&p(\sum_{j=1}^{\nu}\Psi_{s_j})^{p-1}\phi+(\sum_{j=1}^{\nu}\Psi_{s_j})^{p}\\
&&-\sum_{j=1}^{\nu}\Psi_{s_j}^{p}+O(|\phi|^{\sigma_p}),
\end{eqnarray*}
where $\sigma_p=2$ for $p\geq2$ and $\sigma_p=p$ for $1<p<2$.
Thus, \eqref{eq0015} can be rewritten as follows:
\begin{eqnarray}\label{eq0020}
\left\{\aligned&\mathcal{L}(\phi)=E+N(\phi)+\sum_{j=1}^{\nu}c_j\Psi_{s_j}^{p-1}\Psi_{s_j}',\quad \text{in }\mathcal{C},\\
&\langle \Psi_{s_j}', \phi\rangle_{H^{1}(\mathcal{C})}=0\quad\text{for all }j=1,2,\cdots,\nu,\endaligned\right.
\end{eqnarray}
where the linear operator $\mathcal{L}(\phi)$ is given by
\begin{eqnarray}\label{eq0062}
\mathcal{L}(\phi):=-\Delta_{\theta}\phi-\partial_t^2\phi+(a_c-a)^2\phi-p(\sum_{j=1}^{\nu}\Psi_{s_j})^{p-1}\phi,
\end{eqnarray}
$E=(\sum_{j=1}^{\nu}\Psi_{s_j})^{p}-\sum_{j=1}^{\nu}\Psi_{s_j}^{p}$ is the error and $N(\phi)=O(|\phi|^{\sigma_p})$ is the nonlinear part.
\begin{lemma}\label{lemn0002}
For $\delta>0$ sufficiently small, we have
\begin{eqnarray}\label{eqn19997}
\|E\|_{\natural}:=\sum_{i=1}^{\nu}\sup_{t\in(\frac{s_i+s_{i-1}}{2}, \frac{s_{i+1}+s_i}{2})}\frac{|E|}{Qe^{-(a_c-a)(p-2)|t-s_i|}}\lesssim1
\end{eqnarray}
for $1<p<3$ and
\begin{eqnarray}\label{eqn19996}
\|E\|_{\sharp}:=\sum_{i=1}^{\nu}\sup_{t\in(\frac{s_i+s_{i-1}}{2}, \frac{s_{i+1}+s_i}{2})}\frac{|E|}{Qe^{-(1-\varsigma)(a_c-a)|t-s_i|}}\lesssim1
\end{eqnarray}
for $p\geq3$ with $\varsigma>0$ sufficiently small.
\end{lemma}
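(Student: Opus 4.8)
The plan is to deduce both bounds from the single pointwise estimate $|E(t)|\lesssim Q\,e^{-(a_c-a)(p-2)|t-s_i|}$ on each interval $\big(\tfrac{s_{i-1}+s_i}{2},\tfrac{s_i+s_{i+1}}{2}\big)$ and then read off \eqref{eqn19997} and \eqref{eqn19996}. First I record the elementary asymptotics of $\Psi$ that follow directly from \eqref{eq0026}: writing $\lambda=\tfrac{(a_c-a)(p-1)}{2}$ and using $\tfrac12 e^{\lambda|u|}\le\cosh(\lambda u)\le e^{\lambda|u|}$, one gets $e^{-(a_c-a)|u|}\le\Psi(u)\le 2^{\frac{2}{p-1}}e^{-(a_c-a)|u|}$, hence $\Psi_{s_j}(t)\sim e^{-(a_c-a)|t-s_j|}$ with constants depending only on $p$. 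By Proposition~\ref{prop0002} and Lemma~\ref{lem0001} we know $R=\min_{i\neq j}|s_i-s_j|\to+\infty$, equivalently $Q\to0$, as $\delta\to0$, so $Q$ may be assumed small throughout.

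Next I localise. Fix $i\in\{1,\dots,\nu\}$ and let $I_i=\big(\tfrac{s_{i-1}+s_i}{2},\tfrac{s_i+s_{i+1}}{2}\big)$, with the convention $s_0=-\infty$, $s_{\nu+1}=+\infty$. For $t\in I_i$ one checks that $|t-s_i|=\min_k|t-s_k|$, so $\Psi_{s_i}(t)$ dominates every $\Psi_{s_k}(t)$ up to a constant; moreover, since every gap between consecutive $s_k$'s is at least $R$, a geometric-series estimate gives $T_i:=\sum_{j\neq i}\Psi_{s_j}(t)\lesssim\Psi_{s_{i-1}}(t)+\Psi_{s_{i+1}}(t)\lesssim\Psi_{s_i}(t)$ on $I_i$ (only the relevant neighbour occurring when $i=1$ or $i=\nu$). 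Since $E=(\Psi_{s_i}+T_i)^p-\Psi_{s_i}^p-\sum_{j\neq i}\Psi_{s_j}^p$ on $I_i$, the mean value theorem together with $T_i\lesssim\Psi_{s_i}$ gives $|(\Psi_{s_i}+T_i)^p-\Psi_{s_i}^p|\lesssim\Psi_{s_i}^{p-1}T_i$, while $\Psi_{s_j}^p=\Psi_{s_j}^{p-1}\Psi_{s_j}\lesssim\Psi_{s_i}^{p-1}\Psi_{s_j}$ on $I_i$ because $p-1>0$; hence $|E|\lesssim\Psi_{s_i}^{p-1}(\Psi_{s_{i-1}}+\Psi_{s_{i+1}})$ on $I_i$.

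Now the exponential bookkeeping. By symmetry write $t=s_i+r$ with $r\ge0$, so $s_i\le t<s_{i+1}$, $|t-s_{i+1}|=(s_{i+1}-s_i)-r$ and $|t-s_{i-1}|=(s_i-s_{i-1})+r$; then $\Psi_{s_i}^{p-1}\Psi_{s_{i+1}}\lesssim e^{-(a_c-a)[(p-2)r+(s_{i+1}-s_i)]}\le Q\,e^{-(a_c-a)(p-2)r}$ and $\Psi_{s_i}^{p-1}\Psi_{s_{i-1}}\lesssim e^{-(a_c-a)[pr+(s_i-s_{i-1})]}\le Q\,e^{-(a_c-a)pr}\le Q\,e^{-(a_c-a)(p-2)r}$, the last step because $r\ge0$. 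The endpoint cases $i=1,\nu$ are identical with a single neighbour. Hence $|E|\lesssim Q\,e^{-(a_c-a)(p-2)|t-s_i|}$ on $I_i$ uniformly in $i$, which after summing over $i$ is exactly \eqref{eqn19997}, valid for all $p>1$ and in particular for $1<p<3$. Finally, for $p\ge3$ we have $p-2\ge1>1-\varsigma$, so $e^{-(a_c-a)(p-2)|t-s_i|}\le e^{-(1-\varsigma)(a_c-a)|t-s_i|}$, and the same bound immediately gives \eqref{eqn19996}.

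The main obstacle, such as it is, is keeping the bookkeeping honest rather than any genuine difficulty: the estimate $|(A+T)^p-A^p|\lesssim A^{p-1}T$ and the bound $\sum_{j\ne i}\Psi_{s_j}^p\lesssim\Psi_{s_i}^{p-1}\sum_{j\ne i}\Psi_{s_j}$ rely on $T_i,\Psi_{s_j}\lesssim\Psi_{s_i}$ on $I_i$ with constants depending on $\nu$ and $p$, which is harmless since $\nu$ is fixed, and one must be comfortable with the fact that the $\natural$-weight $e^{-(a_c-a)(p-2)|t-s_i|}$ is \emph{increasing} in $|t-s_i|$ when $1<p<2$ --- it is precisely the exponent produced by the ``near neighbour'' of $s_i$, whereas the ``far neighbour'' and all remaining bubbles decay strictly faster and contribute nothing new. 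No use is made here of the equation \eqref{eq0006} satisfied by $\Psi$; only the explicit profile \eqref{eq0026} and the separation $R\to+\infty$ enter.
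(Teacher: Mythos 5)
Your proposal is correct and follows essentially the same route as the paper: localize to the Voronoi-type intervals $I_i$, show $|E|\lesssim \Psi_{s_i}^{p-1}(\Psi_{s_{i-1}}+\Psi_{s_{i+1}})$ there via the Taylor/mean-value expansion and the domination $\sum_{j\neq i}\Psi_{s_j}\lesssim\Psi_{s_i}$, and then track the exponentials to get $|E|\lesssim Q\,e^{-(a_c-a)(p-2)|t-s_i|}$, exactly as in the paper's estimates \eqref{eqn19990}--\eqref{eqn19969}. Your closing observation that the $\sharp$-bound for $p\ge3$ follows from the same pointwise estimate because $p-2\ge 1>1-\varsigma$ is a clean (and valid) way to package what the paper leaves implicit.
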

\begin{proof}
By \eqref{eq0026} and similar arguments as that used for \eqref{eqn19999},
\begin{eqnarray}
E&\sim&\Psi_{s_i}^{p-1}\Psi_{s_{i+1}}\notag\\
&\sim&e^{-(p-1)(a_c-a)|t-s_i|}e^{-(a_c-a)|t-s_{i+1}|}\notag\\
&\sim& e^{-(a_c-a)|s_{i}-s_{i+1}|}e^{-(a_c-a)(p-2)|t-s_i|}\label{eqn19990}
\end{eqnarray}
in the region $(s_i, \frac{s_{i+1}+s_i}{2})$ for all $i=1,2,\cdots,\nu-1$ and
\begin{eqnarray}
E&\sim&\Psi_{s_i}^{p-1}\Psi_{s_{i-1}}\notag\\
&\sim&e^{-(p-1)(a_c-a)|t-s_i|}e^{-(a_c-a)|t-s_{i-1}|}\notag\\
&\sim& e^{-(a_c-a)|s_{i}-s_{i-1}|}e^{-(a_c-a)(p-2)|t-s_i|}\label{eqn19989}
\end{eqnarray}
in the region $(\frac{s_{i-1}+s_i}{2}, s_i)$ for all $i=2,3,\cdots,\nu$.  In the region $(-\infty, s_1)$, since $s_1<s_2<\cdots<s_{\nu-1}<s_\nu$,
\begin{eqnarray}
E&\sim&\Psi_{s_1}^{p-1}\Psi_{s_{2}}\notag\\
&\sim&e^{-(p-1)(a_c-a)|t-s_1|}e^{-(a_c-a)|t-s_{2}|}\notag\\
&\sim& e^{-(a_c-a)|s_{1}-s_{2}|}e^{-p(a_c-a)|t-s_1|}.\label{eqn19970}
\end{eqnarray}
In the region $(s_{\nu}, +\infty)$, since $s_1<s_2<\cdots<s_{\nu-1}<s_\nu$,
\begin{eqnarray}
E&\sim&\Psi_{s_\nu}^{p-1}\Psi_{s_{\nu-1}}\notag\\
&\sim&e^{-(p-1)(a_c-a)|t-s_\nu|}e^{-(a_c-a)|t-s_{\nu-1}|}\notag\\
&\sim& e^{-(a_c-a)|s_{\nu}-s_{\nu-1}|}e^{-p(a_c-a)|t-s_i|}.\label{eqn19969}
\end{eqnarray}
\eqref{eqn19997} and \eqref{eqn19996} then follow immediately from \eqref{eqn19990}, \eqref{eqn19989} and \eqref{eqn19970}, \eqref{eqn19969}.
\end{proof}

To solve \eqref{eq0015}, we shall use the fix point argument, which leads us to establish a good linear theory by considering the following linear equation:
\begin{eqnarray*}\label{eq0021}
\left\{\aligned&\mathcal{L}(\phi)=g,\quad \text{in }\mathcal{C},\\
&\langle \Psi_{s_j}', \phi\rangle_{H^{1}(\mathcal{C})}=0\quad\text{for all }j=1,2,\cdots,\nu,\endaligned\right.
\end{eqnarray*}
where $g$ satisfies $\langle \Psi_{s_j}', g\rangle_{L^{2}(\mathcal{C})}=0$ for all $j=1,2,\cdots,\nu$ and $\mathcal{L}(\phi)$ is given by \eqref{eq0062}.  Based on Lemma~\ref{lemn0002}, we shall introduce the following spaces:
\begin{eqnarray*}
X=\{\phi\in H^1(\mathcal{C})\mid \|\phi\|_{\natural}<+\infty\}\quad, Y=\{\phi\in L^2(\mathcal{C})\mid \|\phi\|_{\natural}<+\infty\},
\end{eqnarray*}
and
\begin{eqnarray*}
\widehat{X}=\{\phi\in H^1(\mathcal{C})\mid \|\phi\|_{\sharp}<+\infty\}\quad, \widehat{Y}=\{\phi\in L^2(\mathcal{C})\mid \|\phi\|_{\sharp}<+\infty\}.
\end{eqnarray*}
Clearly, $X$, $Y$ and $\widehat{X}$, $\widehat{Y}$ are all Banach spaces.  Let
\begin{eqnarray*}
X^{\perp}&=&\{\phi\in X\mid \langle \Psi_{s_j}', \phi\rangle_{H^{1}(\mathcal{C})}=0\quad\text{for all }j=1,2,\cdots,\nu\},\\
Y^{\perp}&=&\{\phi\in Y\mid \langle \Psi_{s_j}', \phi\rangle_{L^{2}(\mathcal{C})}=0\quad\text{for all }j=1,2,\cdots,\nu\}
\end{eqnarray*}
and
\begin{eqnarray*}
\widehat{X}^{\perp}&=&\{\phi\in \widehat{X}\mid \langle \Psi_{s_j}', \phi\rangle_{H^{1}(\mathcal{C})}=0\quad\text{for all }j=1,2,\cdots,\nu\},\\
\widehat{Y}^{\perp}&=&\{\phi\in \widehat{Y}\mid \langle \Psi_{s_j}', \phi\rangle_{L^{2}(\mathcal{C})}=0\quad\text{for all }j=1,2,\cdots,\nu\},
\end{eqnarray*}
then we have the following.
\begin{lemma}\label{lem0002}
Let $b_{FS}(a)\leq b<a+1$ for $a<0$ and $a\leq b<a+1$ for $a\geq0$ and $a+b>0$.
\begin{enumerate}
\item[$(1)$]\quad If $p\geq3$, then for $\delta>0$ sufficiently small, there exists a unique $\phi\in \widehat{X}^{\perp}$ such that $\mathcal{L}(\phi)=g$ and $\|\phi\|_{\sharp}\lesssim\|g\|_{\sharp}$ for every $g\in \widehat{Y}^{\perp}$.
\item[$(2)$]\quad If $1<p<3$, then for $\delta>0$ sufficiently small, there exists a unique $\phi\in X^{\perp}$ such that $\mathcal{L}(\phi)=g$ and $\|\phi\|_{\natural}\lesssim\|g\|_{\natural}$ for every $g\in Y^{\perp}$.
\end{enumerate}
Here $\mathcal{L}(\phi)$ is given by \eqref{eq0062}.
\end{lemma}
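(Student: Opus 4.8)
The plan is to prove Lemma~\ref{lem0002} by the now-standard Lyapunov--Schmidt linear theory for the linearized operator $\mathcal{L}$ (given by \eqref{eq0062}) around a configuration of far-apart bubbles $\Psi_{s_1},\dots,\Psi_{s_\nu}$, adapting the argument of \cite{DSW2021} to the weighted spaces $X,Y$ and $\widehat X,\widehat Y$ built from the norms $\|\cdot\|_\natural$, $\|\cdot\|_\sharp$ of Lemma~\ref{lemn0002}. The two ingredients are: (i) an \emph{a priori estimate} $\|\phi\|_\natural\lesssim\|g\|_\natural$ (for $1<p<3$), resp. $\|\phi\|_\sharp\lesssim\|g\|_\sharp$ (for $p\ge 3$), valid for every solution of $\mathcal{L}(\phi)=g$ in the relevant orthogonal class, with constant independent of the configuration provided $R=\min_{i\ne j}|s_i-s_j|$ is large; and (ii) \emph{existence and uniqueness}, which will follow from (i) by Fredholm theory. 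Note that (i) immediately gives the uniqueness statements, so the heart of the matter is (i) together with the existence part.

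\medskip\noindent\textbf{Step 1: the a priori estimate.} I would argue by contradiction. Suppose the estimate fails; then there are configurations with $R_n\to+\infty$, right-hand sides $g_n$ with $\|g_n\|_\natural\to0$ (resp. $\|g_n\|_\sharp\to0$), and $\phi_n$ in the orthogonal class with $\|\phi_n\|_\natural=1$ (resp. $\|\phi_n\|_\sharp=1$) and $\mathcal{L}_n(\phi_n)=g_n$. For each $i$ set $\widetilde\phi_{i,n}(t,\theta)=\phi_n(t+s_{i,n},\theta)$. The definition of the weighted norm gives a uniform $L^\infty_{loc}$ bound on $\widetilde\phi_{i,n}$, which elliptic estimates upgrade to a $C^{1,\alpha}_{loc}$ bound, so $\widetilde\phi_{i,n}\rightharpoonup\phi_{i,\infty}$ along a subsequence. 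Since $R_n\to+\infty$, on any fixed window about $s_{i,n}$ the potential $p(\sum_j\Psi_{s_{j,n}})^{p-1}$ converges to $p\Psi^{p-1}$ while $g_n\to0$, so $\phi_{i,\infty}$ solves the linearized equation \eqref{eq0016} on $\mathcal{C}$; the weighted bound forces $\phi_{i,\infty}\in H^1(\mathcal{C})$, and the orthogonality condition $\langle\Psi_{s_{i,n}}',\phi_n\rangle_{H^1(\mathcal{C})}=0$ passes to the limit, so the nondegeneracy of $\Psi$ in $H^1(\mathcal{C})$ (from \cite{FS2003}) yields $\phi_{i,\infty}=0$. Hence $\sup|\phi_n|\to0$ on every fixed window around the bubbles.

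\medskip\noindent\textbf{Step 2: closing the contradiction by a barrier, and existence.} It remains to propagate the vanishing in Step~1 to all of $\mathcal{C}$, and this is the crux. The operator $-\Delta_\theta-\partial_t^2+(a_c-a)^2$ admits, away from the bubbles, supersolutions comparable to $\sum_i Qe^{-(a_c-a)(p-2)|t-s_i|}$ when $1<p<3$, resp. $\sum_i Qe^{-(1-\varsigma)(a_c-a)|t-s_i|}$ when $p\ge3$ — this is exactly why the weight exponents are taken strictly below $a_c-a$, so the barrier beats the homogeneous resonance rate $e^{-(a_c-a)|t|}$, while the interaction potential $p(\sum_j\Psi_{s_j})^{p-1}$, being exponentially localized near the $s_j$, is a lower-order perturbation absorbed into the supersolution. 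Feeding in the boundary smallness of Step~1 on the windows and $\|g_n\|_\natural\to0$ (resp. $\|g_n\|_\sharp\to0$) on the right-hand side, the maximum principle gives $|\phi_n(t,\theta)|=o(1)\sum_i Qe^{-(a_c-a)(p-2)|t-s_i|}$ (resp. the $\varsigma$-version), i.e. $\|\phi_n\|_\natural=o(1)$ (resp. $\|\phi_n\|_\sharp=o(1)$), contradicting the normalization. With the a priori estimate in hand, existence is routine: solving $\mathcal{L}(\phi)=g$ in $H^1(\mathcal{C})$ on the $H^1$-orthogonal complement of $\mathrm{span}\{\Psi_{s_j}'\}$, one writes $\mathcal{L}=\mathcal{L}_0-K$ with $\mathcal{L}_0=-\Delta_\theta-\partial_t^2+(a_c-a)^2$ the Riesz isomorphism and $K\phi=p(\sum_j\Psi_{s_j})^{p-1}\phi$ compact, so $\mathcal{L}$ is Fredholm of index zero there; the a priori estimate (which holds a fortiori in $H^1$, since $X\hookrightarrow H^1$ and $Y\hookrightarrow L^2$) rules out a kernel, hence $\mathcal{L}$ is invertible. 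The orthogonality $\langle\Psi_{s_j}',g\rangle_{L^2(\mathcal{C})}=0$ together with the near-orthogonality $\mathcal{L}\Psi_{s_j}'=p\big(\Psi_{s_j}^{p-1}-(\sum_k\Psi_{s_k})^{p-1}\big)\Psi_{s_j}'=O(Q)$ ensures no Lagrange multipliers appear, so the resulting $\phi$ solves $\mathcal{L}(\phi)=g$ exactly; finally the weighted a priori estimate applied to this $\phi$ puts it in $X^\perp$ (resp. $\widehat X^\perp$) with the stated bound.

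\medskip\noindent\textbf{Main obstacle.} The delicate point is the barrier construction in Step~2: one must exhibit a supersolution of $\mathcal{L}$ that simultaneously matches the prescribed decay profile encoded in $\|\cdot\|_\natural$ (resp. $\|\cdot\|_\sharp$), dominates the interaction potential near each $s_i$, and is consistent on the overlapping regions between consecutive bubbles (recall the norms are defined piecewise on the intervals $(\tfrac{s_i+s_{i-1}}{2},\tfrac{s_{i+1}+s_i}{2})$). The dichotomy $1<p<3$ versus $p\ge3$ is precisely dictated by this: a barrier $e^{-\mu|t|}$ is a supersolution of $-\partial_t^2+(a_c-a)^2$ only when $|\mu|<a_c-a$, i.e. when $|p-2|<1$; for $1<p<3$ the error $E\sim Qe^{-(a_c-a)(p-2)|t-s_i|}$ (from Lemma~\ref{lemn0002}) lies on an admissible scale and the response inherits exactly the rate of $E$, so $\|\cdot\|_\natural$ is sharp; for $p\ge3$ the error decays faster than the resonance rate, so $\mathcal{L}_0^{-1}$ can only control the response by $e^{-(1-\varsigma)(a_c-a)|t-s_i|}$ with an arbitrarily small loss $\varsigma>0$ inserted to stay strictly off resonance, which is exactly $\|\cdot\|_\sharp$.
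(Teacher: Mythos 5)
Your proposal follows essentially the same route as the paper's proof: a contradiction/compactness argument for the a priori estimate that combines a rescaled limit near each bubble (killed by the nondegeneracy of $\Psi$ together with the orthogonality conditions) with a barrier/maximum-principle argument between the bubbles, followed by the Fredholm alternative for existence, and your explanation of the $1<p<3$ versus $p\geq3$ dichotomy via the subresonance condition $|\mu|<a_c-a$ is exactly what dictates the paper's choice of the norms $\|\cdot\|_{\natural}$ and $\|\cdot\|_{\sharp}$. The one point to tighten is that the inner blow-up must be performed on $Q_n^{-1}\phi_n(\cdot+s_{i,n})$ (as the paper does), so that the smallness fed into the barrier on the window boundaries is $o(Q_n)$ rather than merely $o(1)$; with that normalization your two steps close the contradiction exactly as in the paper.
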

\begin{proof}
Since the proof is rather standard nowadays (cf. \cite{PDM2003,PFM2003,PMPP2011,WY2007,WY2010}), we only sketch it here.  We start by proving the a-priori estimates $\|\phi\|_{\sharp}\lesssim\|g\|_{\sharp}$ for $p\geq3$ and
$\|\phi\|_{\natural}\lesssim\|g\|_{\natural}$ for $1<p<3$.  Assuming the contrary, that is, there exist $\{g_{n}\}$ and $\{\delta_n\}$ such that $\|g_{n}\|_{\sharp}\to0$ and $\delta_n\to0$ as $n\to\infty$ and $\|\phi_{n}\|_{\sharp}=1$ for $p\geq3$ while, $\|g_{n}\|_{\natural}\to0$ and $\delta_n\to0$ as $n\to\infty$ and $\|\phi_{n}\|_{\natural}=1$ for $1<p<3$.  Since $\delta_n\to0$ as $n\to\infty$, by proposition~\ref{prop0002},
\begin{eqnarray*}
R_n=\min_{i\not=j}|s_{i,n}-s_{j,n}|\to+\infty\quad\text{as }n\to\infty.
\end{eqnarray*}
By the definition of the norms $\|\cdot\|_{\natural}$ and $\|\cdot\|_{\sharp}$ given by \eqref{eqn19997} and \eqref{eqn19996},
\begin{eqnarray*}
|\mathcal{L}(\phi_n)|\lesssim\left\{\aligned&\|g_{n}\|_{\natural}\sum_{i=1}^{\nu}Q_n\varphi_{i,n}(t)\chi_{i,n}(t),\quad 1<p<3,\\
&\|g_{n}\|_{\sharp}\sum_{i=1}^{\nu}Q_n\varphi_{i,n}(t)\chi_{i,n}(t),\quad p\geq3,
\endaligned\right.
\end{eqnarray*}
where
\begin{eqnarray}\label{eqn19993}
\varphi_{i,n}(t)=\left\{\aligned&e^{-(1-\varsigma)(a_c-a)|t-s_{i,n}|},\quad p\geq3,\\
&e^{-(p-2)(a_c-a)|t-s_{i,n}|},\quad 1<p<3,\endaligned\right.
\end{eqnarray}
and $\chi_{i,n}$ is a cut-off function such that
\begin{eqnarray}\label{eqn19992}
\chi_{i,n}(t)=\left\{\aligned&1,\quad t\in (\frac{s_{i,n}+s_{i-1,n}}{2}, \frac{s_{i+1,n}+s_{i,n}}{2}),\\
&0,\quad t\in (\frac{s_{i,n}+s_{i-1,n}}{2}, \frac{s_{i+1,n}+s_{i,n}}{2})^c.\endaligned\right.
\end{eqnarray}
By \eqref{eq0026}, it is easy to see that
\begin{eqnarray*}
\mathcal{L}(\varphi_{i,n})\gtrsim \left\{\aligned&e^{-(1-\varsigma)(a_c-a)|t-s_{i,n}|},\quad p\geq3,\\
&e^{-(p-2)(a_c-a)|t-s_{i,n}|},\quad 1<p<3\endaligned\right.
\end{eqnarray*}
in $(\frac{s_{i,n}+s_{i-1,n}}{2}, \frac{s_{i+1,n}+s_{i,n}}{2})\backslash(s_i-T,s_i+T)$ for a sufficiently large $T>0$.  Thus, by the maximum principle,
\begin{eqnarray}\label{eqn19995}
|\phi_n|\lesssim\left\{\aligned&\|g_{n}\|_{\natural}Q_n\varphi_{i,n}(t),\quad 1<p<3,\\
&\|g_{n}\|_{\sharp}Q_n\varphi_{i,n}(t),\quad p\geq3
\endaligned\right.
\end{eqnarray}
in $(\frac{s_{i,n}+s_{i-1,n}}{2}, \frac{s_{i+1,n}+s_{i,n}}{2})\backslash(s_{i,n}-T, s_{i,n}+T)$ for all $i=1,2,\cdots,\nu$.  On the other hand, by $\|\phi_{n}\|_{\sharp}=1$ and $\|g_{n}\|_{\sharp}=o_n(1)$ for $p\geq3$ while
$\|\phi_{n}\|_{\natural}=1$ and $\|g_{n}\|_{\natural}=o_n(1)$ for $1<p<3$, it is standard to use the Moser iteration and the Sobolev embedding theorem to show that $Q_n^{-1}\phi_{n}(\cdot+s_{i,n})\to\widehat{\phi}$ uniformly in every compact set of $\mathcal{C}$ as $n\to\infty$ for all $i=1,2,\cdots,\nu$, where $\widehat{\phi}$ is a solution of \eqref{eq0016}.  We recall that by the nondegeneracy of $\Psi$ in $H^1(\mathcal{C})$, $\Psi'$ is the only nonzero solution of \eqref{eq0016}.  Thus, we must have that $\widehat{\phi}=C\Psi'$, which together with the orthogonal condition in $X^{\perp}$ for $1<p<3$ and the orthogonal condition in $\widehat{X}^{\perp}$ for $p\geq3$, implies that $\widehat{\phi}=0$.  Since $\varphi_{i,n}(t)\sim1$ in $[s_{i,n}-T, s_{i,n}+T]$ for fixed $T>0$, $\frac{|\phi_{n}|}{Q_n\varphi_{i,n}(t)}=o_n(1)$ in $[s_{i,n}-T, s_{i,n}+T]$ for fixed $T>0$.  Thus, by \eqref{eqn19995}, $\|\phi_{n}\|_{\sharp}=o_n(1)$ for $p\geq3$ and $\|\phi_{n}\|_{\natural}=o_n(1)$ for $1<p<3$.  It contradicts $\|\phi_{n}\|_{\sharp}=1$ for $p\geq3$ and
$\|\phi_{n}\|_{\natural}=1$ for $1<p<3$.
The a-priori estimates $\|\phi\|_{\sharp}\lesssim\|g\|_{\sharp}$ for $p\geq3$ and
$\|\phi\|_{\natural}\lesssim\|g\|_{\natural}$ for $1<p<3$ imply that $\mathcal{L}: X^{\perp}\to Y^{\perp}$ for $1<p<3$ and $\mathcal{L}: \widehat{X}^{\perp}\to \widehat{Y}^{\perp}$ for $p\geq3$ are injective for $\delta>0$ sufficiently small.  Since $(\sum_{j=1}^{\nu}\Psi_{s_j})^{p-1}\to0$ as $|t|\to+\infty$ by \eqref{eq0026}, it is standard to use the the Fredholm alternative to show that for $\delta>0$ sufficiently small, $\mathcal{L}(\phi)=g$ is unique solvable in $X^{\perp}$ for every $g\in Y^{\perp}$ in the case of $1<p<3$ and $\mathcal{L}(\phi)=g$ is unique solvable in $\widehat{X}^{\perp}$ for every $g\in \widehat{Y}^{\perp}$ in the case of $p\geq3$.
\end{proof}

Let us go back to \eqref{eq0020}, then we have the following.
\begin{lemma}\label{lem0003}
Let $b_{FS}(a)\leq b<a+1$ for $a<0$ and $a\leq b<a+1$ for $a\geq0$ and $a+b>0$.  Then \eqref{eq0020} has a unique solution $(\psi,c_1,c_2,\cdots,c_\nu)$ for $\delta>0$ sufficiently small.  Moreover,
\begin{eqnarray}\label{eq0027}
\|\phi\|_{H^1(\mathcal{C})}\lesssim \left\{\aligned &Q,\quad p>2,\\
&Q|\log(Q)|^{\frac12},\quad p=2,\\
&Q^{\frac{p}{2}},\quad 1<p<2.
\endaligned\right.
\end{eqnarray}
and $\sum_{j=1}^\nu|c_l|\lesssim Q$.
\end{lemma}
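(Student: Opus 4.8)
The plan is to solve the system \eqref{eq0020} by a Lyapunov--Schmidt / fixed point scheme built on the linear theory of Lemma~\ref{lem0002}, and then to convert the resulting weighted pointwise bound on $\phi$ into the $H^1(\mathcal{C})$ estimate \eqref{eq0027}. First I would rephrase \eqref{eq0020} as a fixed point problem in $X^\perp$ (when $1<p<3$) or $\widehat{X}^\perp$ (when $p\geq3$): given $\phi$ in a small ball of the relevant space, the right hand side $E+N(\phi)$ is not orthogonal to the $\Psi_{s_j}'$, so one first chooses $c_1,\dots,c_\nu$ so that $E+N(\phi)+\sum_j c_j\Psi_{s_j}^{p-1}\Psi_{s_j}'$ lies in $Y^\perp$ (resp. $\widehat{Y}^\perp$); this amounts to inverting the matrix $\big(\langle\Psi_{s_j}^{p-1}\Psi_{s_j}',\Psi_{s_l}'\rangle_{L^2(\mathcal{C})}\big)_{j,l}$, which by Lemma~\ref{lem0001} and $R\to+\infty$ is diagonally dominant with diagonal entries $\sim\int_{\bbr}\Psi^{p-1}(\Psi')^2\,dt>0$, hence invertible, and yields $|c_l|\lesssim\sum_j|\langle E+N(\phi),\Psi_{s_j}'\rangle_{L^2(\mathcal{C})}|$. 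Then I would apply the a priori estimate of Lemma~\ref{lem0002} to $\mathcal{L}(\phi')=E+N(\phi)+\sum_j c_j\Psi_{s_j}^{p-1}\Psi_{s_j}'$ to define the solution map $\phi\mapsto\phi'$.

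Next I would run the contraction argument. The error term is controlled by Lemma~\ref{lemn0002}: $\|E\|_\natural\lesssim1$ for $1<p<3$ and $\|E\|_\sharp\lesssim1$ for $p\geq3$. For the nonlinear part, using $N(\phi)=O(|\phi|^{\sigma_p})$ with $\sigma_p>1$ together with the pointwise bound encoded in $\|\phi\|_\natural$ (resp. $\|\phi\|_\sharp$), one gets $\|N(\phi)\|_\natural\lesssim Q^{\sigma_p-1}\|\phi\|_\natural^{\sigma_p}$, with a possible extra power of $Q$ coming from the width $\sim R$ of the transition regions, always with a favourable sign since $R=-(a_c-a)^{-1}\log Q\to+\infty$, and similarly for the difference $N(\phi_1)-N(\phi_2)$. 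Since $Q\to0$ as $\delta\to0$, this makes the map a contraction on the ball $\{\|\phi\|_\natural\leq C_0\}$ (resp. $\{\|\phi\|_\sharp\leq C_0\}$) for a fixed $C_0$, giving the unique solution $(\phi,c_1,\dots,c_\nu)$ with $\|\phi\|_\natural\lesssim1$ (resp. $\|\phi\|_\sharp\lesssim1$). Estimating $\langle E,\Psi_{s_j}'\rangle_{L^2(\mathcal{C})}$ by the same region-by-region computation used in the proof of Lemma~\ref{lemn0001}, where it is shown to be $\sim Q$, and absorbing the higher-order $\langle N(\phi),\Psi_{s_j}'\rangle_{L^2(\mathcal{C})}$, then yields $\sum_l|c_l|\lesssim Q$.

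Finally I would derive \eqref{eq0027}. The bound $\|\phi\|_\natural\lesssim1$ means $|\phi(t)|\lesssim Q\,e^{-(a_c-a)(p-2)|t-s_i|}$ on the $i$-th region for $1<p<3$, while $\|\phi\|_\sharp\lesssim1$ gives $|\phi(t)|\lesssim Q\,e^{-(1-\varsigma)(a_c-a)|t-s_i|}$ for $p\geq3$. Squaring, integrating over each region of width $\sim R\sim|\log Q|$, and summing over $i$ gives: for $p>2$ (in particular for $p\geq3$) the exponential weight is integrable on $\bbr$, so $\|\phi\|_{L^2(\mathcal{C})}\lesssim Q$; for $p=2$ the weight is $\equiv1$ and the length of the region contributes a factor $R$, so $\|\phi\|_{L^2(\mathcal{C})}\lesssim Q|\log Q|^{1/2}$; for $1<p<2$ the weight grows and its integral over $(-R/2,R/2)$ is $\sim Q^{p-2}$, so $\|\phi\|_{L^2(\mathcal{C})}\lesssim Q^{p/2}$. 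The gradient part $\|\partial_t\phi\|_{L^2(\mathcal{C})}+\|\nabla_\theta\phi\|_{L^2(\mathcal{C})}$ is then recovered by testing the equation in \eqref{eq0020} with $\phi$ itself and using the already established control on $\|E\|$, $\|N(\phi)\|$, $\sum_l|c_l|$ and the just proven $L^2$ bound, which gives the same orders and completes \eqref{eq0027}.

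I expect the main obstacle to be the nonlinear estimate in the weighted norms when $p$ is close to $1$: there $\sigma_p=p$ is barely above $1$, $N(\phi)=O(|\phi|^p)$ decays only slowly, and the weight $e^{-(a_c-a)(p-2)|t-s_i|}$ actually grows on the transition regions, so one must track precisely how the largeness of $R$, equivalently the smallness of $Q$, beats the unfavourable exponential growth and the slow power $|\phi|^p$; this is exactly the mechanism that produces the borderline $p=2$ logarithm and the $Q^{p/2}$ rate for $1<p<2$, and it is where the choice of the norms $\|\cdot\|_\natural$, $\|\cdot\|_\sharp$, and the parameter $\varsigma$ for $p\geq3$, has to be made with care. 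A secondary point is the region where $|\phi|$ is comparable to $\sum_j\Psi_{s_j}$, where $N(\phi)$ must be estimated by $|\phi|^p$ rather than by $(\sum_j\Psi_{s_j})^{p-2}\phi^2$; this is harmless but needs to be written out.
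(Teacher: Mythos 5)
Your proposal is correct and follows essentially the same route as the paper: the $c_j$ are fixed by inverting the nearly diagonal matrix $\big(\langle\Psi_{s_j}^{p-1}\Psi_{s_j}',\Psi_{s_i}'\rangle_{L^2(\mathcal{C})}\big)$, the fixed point is run in the balls $\{\|\phi\|_{\natural}\leq C\}$ (for $1<p<3$) and $\{\|\phi\|_{\sharp}\leq C\}$ (for $p\geq3$) using Lemmas~\ref{lemn0002} and \ref{lem0002}, $\sum_l|c_l|\lesssim Q$ comes from $\langle E,\Psi_{s_j}'\rangle_{L^2(\mathcal{C})}\sim Q$ with the $N(\phi)$ pairing absorbed, and \eqref{eq0027} comes from testing the equation with $\phi$ and evaluating the same integrals $Q^2\int\Psi_{s_i}^{2(p-2)}$ that produce the three regimes $Q$, $Q|\log Q|^{1/2}$, $Q^{p/2}$. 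Your minor reorganization (extracting the weighted $L^2$ bound on $\phi$ first and then recovering the gradient by testing) is computationally identical to the paper's direct estimate of $\langle E,\phi\rangle_{L^2(\mathcal{C})}$ via the weighted sup norms, so there is nothing substantive to flag.
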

\begin{proof}
Since $R\to+\infty$ as $\delta\to0$ and $p>1$, by \cite[Lemma~6]{LW05} and \eqref{eq0026},
\begin{eqnarray}\label{eq0033}
\langle\Psi_{s_j}^{p-1}\Psi_{s_j}',\Psi_{s_i}'\rangle_{L^2(\mathcal{C})}\sim Q
\end{eqnarray}
for $i\not=j$ for $\delta>0$ sufficiently small.  Thus, $\{c_j\}$ in \eqref{eq0020} can be chosen to be the unique solution of the following equation:
\begin{eqnarray*}
(\langle\Psi_{s_j}^{p-1}\Psi_{s_j}',\Psi_{s_i}'\rangle_{L^2(\mathcal{C})})_{i,j=1,2,\cdots,\nu}\bullet(c_j)_{j=1,2,\cdots,\nu}=-(\langle E+N(\phi), \Psi_{s_i}'\rangle_{L^2(\mathcal{C})})_{i=1,2,\cdots,\nu}.
\end{eqnarray*}
By Lemmas~\ref{lemn0002} and \ref{lem0002}, and adapting the fix point arguments in a standard way (cf. \cite{PDM2003,PFM2003,PMPP2011,WY2007,WY2010}), \eqref{eq0020} is unique solvable in the set $\widehat{B}=\{\phi\in \widehat{X}^{\perp}\mid \|\phi\|_{\sharp}\leq C\}$
in the case of $p\geq3$ and in the set $B=\{\phi\in X^{\perp}\mid \|\phi\|_{\natural}\leq C\}$
in the case of $1<p<3$ for a sufficiently large $C>0$.  Note that $N(\phi)=O(\phi^{2})$ for $p\geq2$, by $-\Psi'\sim\Psi$ and \cite[Lemma~6]{LW05},
\begin{eqnarray*}
|\langle N(\phi), \Psi_{s_j}'\rangle|\lesssim\int_{\mathcal{C}}\sum_{i=1}^{\nu}(Q\varphi_{i}(t)\chi_{i}(t))^{2}\Psi_{s_j}\lesssim Q^2,
\end{eqnarray*}
where $\varphi_i$ is given by \eqref{eqn19993} and $\chi_{i}$ is a cut-off function given by \eqref{eqn19992}.  For $1<p<2$, $N(\phi)=O(|\phi|^{p})$.  Thus, by $-\Psi'\sim\Psi$ and \eqref{eq0026},
\begin{eqnarray*}
|\langle N(\phi), \Psi_{s_j}'\rangle|\lesssim\int_{\mathcal{C}}\sum_{i=1}^{\nu}(Q\varphi_{i}(t)\chi_{i}(t))^{p}\Psi_{s_j}\sim Q^p\int_0^{\frac{R}{2}}e^{(2-p)p(a_c-a)r-r}=O(Q^{\frac{p^2+1}{2}}).
\end{eqnarray*}
On the other hand, by \eqref{eq0026}, \eqref{eqn19990}, \eqref{eqn19989}, \eqref{eqn19970}, \eqref{eqn19969} and $-\Psi_{s_j}'\sim\Psi_{s_j}$,
\begin{eqnarray*}\label{eqn1001}
|\langle E, \Psi_{s_j}'\rangle_{L^2(\mathcal{C})}|\sim Q\sum_{i=1}^{\nu}\int_{(\frac{s_{i-1}+s_i}{2}, \frac{s_i+s_{i+1}}{2})}\Psi_{s_i}^{p-2}\Psi_{s_j}\sim Q\int_{(\frac{s_{j-1}+s_j}{2}, \frac{s_j+s_{j+1}}{2})}\Psi_{s_j}^{p-1}\sim Q.
\end{eqnarray*}
It follows from $p>1$ that $\sum_{j=1}^\nu|c_l|\lesssim Q$ and
\begin{eqnarray}\label{eqn19988}
\|\phi\|_{H^1(\mathcal{C})}^2&=&\langle E+N(\phi)+\sum_{j=1}^{\nu}c_j\Psi_{s_j}^{p-1}\Psi_{s_j}', \phi\rangle_{L^2(\mathcal{C})}\notag\\
&\leq& \langle E, \phi\rangle_{L^2(\mathcal{C})}+O(Q\|\phi\|_{H^1(\mathcal{C})}+\|\phi\|_{H^1(\mathcal{C})}^{\sigma_p+1}),
\end{eqnarray}
where $\sigma_p=2$ for $p\geq2$ and $\sigma_p=p$ for $1<p<2$.
Since $\|\phi\|_{\sharp}\leq C$ for $p\geq3$, by \eqref{eqn19996},
\begin{eqnarray*}
\langle E, \phi\rangle_{L^2(\mathcal{C})}\lesssim\sum_{i=1}^{\nu}Q^2\int_{(\frac{s_{i-1}+s_i}{2}, \frac{s_i+s_{i+1}}{2})}\Psi_{s_i}^{(2-2\varsigma)}\sim Q^2
\end{eqnarray*}
for $p\geq3$.
For $1<p<3$, $\|\phi\|_{\natural}\leq C$.  Thus, by \eqref{eqn19997},
\begin{eqnarray*}
\langle E, \phi\rangle_{L^2(\mathcal{C})}\lesssim\sum_{i=1}^{\nu}Q^2\int_{(\frac{s_{i-1}+s_i}{2}, \frac{s_i+s_{i+1}}{2})}\Psi_{s_i}^{2(p-2)}\sim \left\{\aligned&Q^2,\quad p>2,\\
&Q^2\log(Q),\quad p=2,\\
&Q^{p},\quad 1<p<2.\endaligned\right.
\end{eqnarray*}
\eqref{eq0027} then follows from \eqref{eqn19988}.
\end{proof}

Let $\varphi=\rho-\phi$, then by \eqref{eq0014} and \eqref{eq0015},
\begin{eqnarray}\label{eq0028}
\left\{\aligned&-\Delta_{\theta}\varphi-\partial_t^2\varphi+(a_c-a)^2\varphi\\
&=(\sum_{j=1}^{\nu}\Psi_{s_j}+\phi+\varphi)^{p}-|\sum_{j=1}^{\nu}\Psi_{s_j}+\phi|^{p-1}(\sum_{j=1}^{\nu}\Psi_{s_j}+\phi)\\
&-\sum_{j=1}^{\nu}c_j\Psi_{s_j}^{p-1}\Psi_{s_j}'+f,\quad \text{in }\mathcal{C},\\
&\langle \Psi_{s_j}', \varphi\rangle_{H^{1}(\mathcal{C})}=0\quad\text{for all }j=1,2,\cdots,\nu.\endaligned\right.
\end{eqnarray}
Let $M_0=\text{span}\{\Psi_{s_j}\}$ and $M=\text{span}\{\Psi_{s_j}'\}$.  Then by the orthogonal conditions satisfied by $\varphi$, we can decompose $\varphi=\sum_{j=1}^{\nu}\beta_j\Psi_{s_j}+\Psi^{\perp}$,
where $\Psi^{\perp}\in(M_0\oplus M)^{\perp}$ in $H^1(\mathcal{C})$.
\begin{lemma}\label{lem0004}
Let $b_{FS}(a)\leq b<a+1$ for $a<0$ and $a\leq b<a+1$ for $a\geq0$ and $a+b>0$.  Then for $\delta>0$ sufficiently small, we have
\begin{eqnarray}\label{eq0035}
|\beta_{j}|\lesssim \|f\|_{H^{-1}(\mathcal{C})}+Q^2
\end{eqnarray}
and
\begin{eqnarray}\label{eq0036}
\|\Psi^{\perp}\|_{H^1(\mathcal{C})}\lesssim \|f\|_{H^{-1}(\mathcal{C})}+Q^2.
\end{eqnarray}
\end{lemma}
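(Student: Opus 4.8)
The plan is to test equation~\eqref{eq0028} against the model functions $\Psi_{s_i}$, $i=1,\dots,\nu$, in order to control the coefficients $\beta_j$, and against $\Psi^\perp$ itself, in order to control the orthogonal remainder; this turns \eqref{eq0028} into an almost diagonal linear system whose right-hand side is of order $\|f\|_{H^{-1}(\mathcal{C})}+Q^2$ up to terms that can be absorbed once $\varphi=\rho-\phi$ is known to be small. I would first recast \eqref{eq0028} as $\mathcal{L}(\varphi)=f-\sum_{j=1}^\nu c_j\Psi_{s_j}^{p-1}\Psi_{s_j}'+N_1(\varphi,\phi)$, where $\mathcal{L}$ is the operator \eqref{eq0062} and, by the Taylor expansion already used for \eqref{eq0015}, $N_1=(\sum_j\Psi_{s_j}+\phi+\varphi)^p-|\sum_j\Psi_{s_j}+\phi|^{p-1}(\sum_j\Psi_{s_j}+\phi)-p(\sum_j\Psi_{s_j})^{p-1}\varphi$ obeys $|N_1|\lesssim|\varphi|^{\sigma_p}+(\sum_j\Psi_{s_j})^{p-2}|\phi|\,|\varphi|\,\chi_{p\geq 2}+|\phi|^{p-1}|\varphi|$, so that by Hölder's inequality and the CKN inequality on $\mathcal{C}$ one gets $|\langle N_1,\eta\rangle_{L^2(\mathcal{C})}|\lesssim(\|\varphi\|_{H^1(\mathcal{C})}^{\sigma_p-1}+\|\phi\|_{H^1(\mathcal{C})})\|\varphi\|_{H^1(\mathcal{C})}\|\eta\|_{H^1(\mathcal{C})}$ for all $\eta\in H^1(\mathcal{C})$. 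The structural point is that apart from $f$ and the multiplier term, every contribution carries at least one factor of $\varphi$ (the pure error $(\sum_j\Psi_{s_j})^p-\sum_j\Psi_{s_j}^p$ has already been cancelled in passing from \eqref{eq0014} and \eqref{eq0015} to \eqref{eq0028}).

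Pairing the recast equation with $\Psi_{s_i}$ and using that $\Psi$ solves \eqref{eq0006}, that $\langle\Psi_{s_i},\Psi^\perp\rangle_{H^1(\mathcal{C})}=0$, that $\int_{\mathcal{C}}\Psi_{s_i}^p\Psi_{s_i}'=\frac{1}{p+1}\int_{\mathcal{C}}(\Psi_{s_i}^{p+1})'=0$, that $\|\Psi_{s_i}\|_{H^1(\mathcal{C})}^2=\int_{\mathcal{C}}\Psi^{p+1}$, and the interaction estimate of Lemma~\ref{lem0001}, I obtain the principal balance $(1-p)\|\Psi\|_{H^1(\mathcal{C})}^2\beta_i=O(\|f\|_{H^{-1}(\mathcal{C})})+O(Q^2)+o(1)\big(\sum_{j\neq i}|\beta_j|+\|\Psi^\perp\|_{H^1(\mathcal{C})}\big)+O\big((\|\varphi\|_{H^1(\mathcal{C})}^{\sigma_p-1}+\|\phi\|_{H^1(\mathcal{C})})\|\varphi\|_{H^1(\mathcal{C})}\big)$, where $o(1)\to0$ as $\delta\to0$ by Proposition~\ref{prop0002} and Lemma~\ref{lem0001}; here the $O(Q^2)$ is exactly $\sum_j c_j\langle\Psi_{s_j}^{p-1}\Psi_{s_j}',\Psi_{s_i}\rangle_{L^2(\mathcal{C})}$, because $\sum_j|c_j|\lesssim Q$ by Lemma~\ref{lem0003} while the $j=i$ diagonal term vanishes and the off-diagonal ones are $O(Q)$. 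Since $p>1$, the leading coefficient $1-p$ is nonzero, so after summing over $i$ and absorbing the small off-diagonal $\beta$-coupling I get $\sum_i|\beta_i|\lesssim\|f\|_{H^{-1}(\mathcal{C})}+Q^2+o(1)\|\Psi^\perp\|_{H^1(\mathcal{C})}+(\|\varphi\|_{H^1(\mathcal{C})}^{\sigma_p-1}+\|\phi\|_{H^1(\mathcal{C})})\|\varphi\|_{H^1(\mathcal{C})}$.

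For the orthogonal part I would pair the recast equation with $\Psi^\perp$. Since $\langle\varphi,\Psi^\perp\rangle_{H^1(\mathcal{C})}=\|\Psi^\perp\|_{H^1(\mathcal{C})}^2$, the left side produces $\langle\mathcal{L}\Psi^\perp,\Psi^\perp\rangle_{L^2(\mathcal{C})}$ up to cross terms $o(1)\sum_j|\beta_j|\,\|\Psi^\perp\|_{H^1(\mathcal{C})}$ from $\langle\mathcal{L}(\sum_j\beta_j\Psi_{s_j}),\Psi^\perp\rangle_{L^2(\mathcal{C})}$. The key analytic input is the coercivity $\langle\mathcal{L}\psi,\psi\rangle_{L^2(\mathcal{C})}\gtrsim\|\psi\|_{H^1(\mathcal{C})}^2$ for all $\psi\in(M_0\oplus M)^\perp$ and $\delta$ small, which I would prove by the standard localization (partition of unity) argument using $R\to+\infty$, the fact that each single bubble $\Psi$ has Morse index $1$ with its unique negative direction in $M_0$, and the Felli--Schneider non-degeneracy \eqref{eq0016} (so the one-bubble linearized operator has kernel exactly spanned by $\Psi'$). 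On the right side, $\langle f,\Psi^\perp\rangle\leq\|f\|_{H^{-1}(\mathcal{C})}\|\Psi^\perp\|_{H^1(\mathcal{C})}$; the multiplier term drops out entirely, because $\Psi_{s_j}'$ solves \eqref{eq0016} and $\langle\Psi_{s_j}',\Psi^\perp\rangle_{H^1(\mathcal{C})}=0$ give $\langle\Psi_{s_j}^{p-1}\Psi_{s_j}',\Psi^\perp\rangle_{L^2(\mathcal{C})}=\frac1p\langle\Psi_{s_j}',\Psi^\perp\rangle_{H^1(\mathcal{C})}=0$; and $\langle N_1,\Psi^\perp\rangle$ is bounded as above. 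Hence $\|\Psi^\perp\|_{H^1(\mathcal{C})}\lesssim\|f\|_{H^{-1}(\mathcal{C})}+o(1)\sum_j|\beta_j|+(\|\varphi\|_{H^1(\mathcal{C})}^{\sigma_p-1}+\|\phi\|_{H^1(\mathcal{C})})\|\varphi\|_{H^1(\mathcal{C})}$.

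Finally I would close the loop. Setting $D:=\sum_j|\beta_j|+\|\Psi^\perp\|_{H^1(\mathcal{C})}$, the $H^1$-orthogonality of $\Psi^\perp$ to the $\Psi_{s_j}$ together with the near-orthonormality of the $\Psi_{s_j}$ (Lemma~\ref{lem0001}) gives $\|\varphi\|_{H^1(\mathcal{C})}\sim D$, while $D\to0$ as $\delta\to0$ since $\|\rho\|_{H^1(\mathcal{C})}\to0$ (Proposition~\ref{prop0002}) and $\|\phi\|_{H^1(\mathcal{C})}\to0$ (Lemma~\ref{lem0003}). Adding the two displayed inequalities and absorbing into the left side the terms $o(1)D$, $D^{\sigma_p}=D\cdot D^{\sigma_p-1}$ (legitimate since $\sigma_p>1$) and $\|\phi\|_{H^1(\mathcal{C})}D$ (legitimate since $\|\phi\|_{H^1(\mathcal{C})}\to0$), I obtain $D\lesssim\|f\|_{H^{-1}(\mathcal{C})}+Q^2$, which is exactly \eqref{eq0035} and \eqref{eq0036}. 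I expect the main obstacle to be establishing the spectral coercivity of $\mathcal{L}$ on $(M_0\oplus M)^\perp$ with a constant uniform in the large separation $R$, together with the careful region-by-region bookkeeping of the interaction and nonlinear error terms needed to certify that nothing larger than $\|f\|_{H^{-1}(\mathcal{C})}+Q^2$ survives after the absorptions.
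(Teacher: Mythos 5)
Your proposal is correct and follows essentially the same route as the paper: testing \eqref{eq0028} against the $\Psi_{s_i}$ to isolate $(1-p)\|\Psi\|_{H^1(\mathcal{C})}^2\beta_i$, testing against $\Psi^{\perp}$ and invoking the coercivity of $\mathcal{L}$ on $(M_0\oplus M)^{\perp}$ (derived from the Morse index $1$ and Felli--Schneider nondegeneracy of a single bubble, exactly as in \eqref{eq0029}--\eqref{eq0032}), bounding the multiplier term by $O(Q^2)$ via $\sum_j|c_j|\lesssim Q$ and the $O(Q)$ off-diagonal interactions, and closing the two coupled inequalities by absorption since all the coupling coefficients are $o(1)$ as $\delta\to0$. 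The only cosmetic difference is that you track the $\phi$-dependent part of the nonlinearity explicitly, whereas the paper folds it into a potential $q(t,\theta)$ with small $L^{\infty}$ norm; the outcome is the same.
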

\begin{proof}
Since $\Psi$ is the minimizer of \eqref{eq0009}, the Morse index of $\Psi$ is equal to $1$.  It follows from the nondegeneracy of $\Psi$ that
\begin{eqnarray}\label{eq0029}
\int_{\mathcal{C}}|\nabla_\theta v|^2+|\partial_t v|^2+(a_c-a)^2v^2>(p+2\ve)\int_{\mathcal{C}}\Psi^{p-1}v^2
\end{eqnarray}
for all $v\in\text{span}\{\Psi,\Psi'\}^{\perp}$ with some $\ve>0$ sufficiently small.  Since $R\to+\infty$ as $\delta\to0$ by Proposition~\ref{prop0002} and $p>1$, for $\delta>0$ sufficiently small, it is standard to use \eqref{eq0029} and the exponential decay of $\Psi$ at infinity given by \eqref{eq0026} to show that
\begin{eqnarray}\label{eq0032}
\int_{\mathcal{C}}|\nabla_\theta v|^2+|\partial_t v|^2+(a_c-a)^2v^2>(p+\ve)\int_{\mathcal{C}}(\sum_{j=1}^{\nu}\Psi_{s_j})^{p-1}v^2
\end{eqnarray}
for all $v\in(M_0\oplus M)^{\perp}$.  By \eqref{eq0026}, $\|\phi\|_{\sharp}\leq C$ for $p\geq3$ and $\|\phi\|_{\natural}\leq C$ for $1<p<3$,
\begin{eqnarray*}
&&(\sum_{j=1}^{\nu}\Psi_{s_j}+\phi+\varphi)^{p}-|\sum_{j=1}^{\nu}\Psi_{s_j}+\phi|^{p-1}(\sum_{j=1}^{\nu}\Psi_{s_j}+\phi)\\
&=&p|\sum_{j=1}^{\nu}\Psi_{s_j}+\phi|^{p-1}\varphi+O(|\varphi|^{\sigma_p})\\
&=&p(\sum_{j=1}^{\nu}\Psi_{s_j})^{p-1}\varphi+q(t,\theta)\varphi+O(|\varphi|^{\sigma_p}),
\end{eqnarray*}
where $\|q\|_{L^{\infty}(\mathcal{C})}\leq\widehat{\delta}$ with $\widehat{\delta}\to0$ as $\delta\to0$, $\sigma_p=2$ for $p\geq2$ and $\sigma_p=p$ for $1<p<2$.  Now, multiplying \eqref{eq0028} with $\Psi_{s_j}$ for all $j$ and $\Psi^{\perp}$, respectively, and integrating by parts,
\begin{eqnarray*}
(1-p)\beta_{j}\|\Psi\|_{H^1(\mathcal{C})}^2&=&\int_{\mathcal{C}}(p(\sum_{l=1}^{\nu}\Psi_{s_l})^{p-1}+q(t,\theta))(\sum_{i\not=j}\beta_{i}\Psi_{s_i}+\Psi^{\perp})\Psi_{s_j}\\
&&+\beta_{j}\int_{\mathcal{C}}(p(\sum_{l=1}^{\nu}\Psi_{s_l})^{p-1}-p\Psi_{s_j}^{p-1}+q(t,\theta))\Psi_{s_j}^2\\
&&+\langle f, \Psi_{s_j}\rangle_{L^2(\mathcal{C})}+\sum_{i\not=j}c_i\langle\Psi_{s_i}^{p-1}\Psi_{s_i}', \Psi_{s_j}\rangle_{L^2(\mathcal{C})}\\
&&+O(\sum_{i=1}^{\nu}\beta_{i}^{\sigma_p+1}+\|\Psi^{\perp}\|_{H^1(\mathcal{C})}^{\sigma_p+1})
\end{eqnarray*}
and
\begin{eqnarray*}
\|\Psi^{\perp}\|_{H^1(\mathcal{C})}^2&=&\int_{\mathcal{C}}(p(\sum_{l=1}^{\nu}\Psi_{s_l})^{p-1}+q(t,\theta))(\sum_{i=1}^{\nu}\beta_{i}\Psi_{s_i}+\Psi^{\perp})\Psi^{\perp}\\
&&+\langle f, \Psi^{\perp}\rangle_{L^2(\mathcal{C})}+O(\sum_{i=1}^{\nu}\beta_{i}^{\sigma_p+1}+\|\Psi^{\perp}\|_{H^1(\mathcal{C})}^{\sigma_p+1}).
\end{eqnarray*}
By \eqref{eqn19990}, \eqref{eqn19989}, \eqref{eqn19970}, \eqref{eqn19969} and \cite[Lemma~6]{LW05},
\begin{eqnarray*}\label{eq0030}
|\beta_{j}|\lesssim \widetilde{\delta}(\sum_{i\not=j}|\beta_{i}|+\|\Psi^{\perp}\|_{H^1(\mathcal{C})})+\|f\|_{H^{-1}(\mathcal{C})}+\sum_{i\not=j}|c_i\langle\Psi_{s_i}^{p-1}\Psi_{s_i}', \Psi_{s_j}\rangle_{L^2(\mathcal{C})}|
\end{eqnarray*}
for all $j=1,2,\cdots,\nu$ and by \eqref{eq0032},
\begin{eqnarray*}\label{eq0031}
\|\Psi^{\perp}\|_{H^1(\mathcal{C})}\lesssim \widetilde{\delta}\sum_{i=1}^{\nu}|\beta_{i}|+\|f\|_{H^{-1}(\mathcal{C})},
\end{eqnarray*}
where $\widetilde{\delta}\to0$ as $\delta\to0$.  It follows from Lemma~\ref{lem0003} and \eqref{eq0033} that \eqref{eq0035} and \eqref{eq0036} hold for $\delta>0$ sufficiently small.
\end{proof}

We are now in the position to prove the following stability.
\begin{proposition}\label{prop0005}
Let $v\in H^1(\mathcal{C})$ be nonnegative such that
\begin{eqnarray*}
(\nu-\frac12)(C_{a,b,N}^{-1})^{\frac{p+1}{p-1}}<\|v\|_{H^1(\mathcal{C})}^2<(\nu+\frac12)(C_{a,b,N}^{-1})^{\frac{p+1}{p-1}}
\end{eqnarray*}
with $\nu\geq2$ and
$\|f\|_{H^{-1}(\mathcal{C})}\leq\delta$ for $\delta>0$ sufficiently small, where $f$ is given by \eqref{eq0060}.  Then either for $b_{FS}(a)\leq b<a+1$ with $a<0$ or for $a\leq b<a+1$ with $a\geq0$ and $a+b>0$, we have
\begin{eqnarray*}
d_*(v)\lesssim \left\{\aligned &\|f\|_{H^{-1}(\mathcal{C})},\quad p>2,\\
&\|f\|_{H^{-1}(\mathcal{C})}|\log(\|f\|_{H^{-1}(\mathcal{C})})|^{\frac12},\quad p=2,\\
&\|f\|_{H^{-1}(\mathcal{C})}^{\frac{p}{2}},\quad 1<p<2.
\endaligned\right.
\end{eqnarray*}
\end{proposition}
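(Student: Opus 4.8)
The plan is to assemble the ingredients established earlier in this subsection. Write $v=\sum_{j=1}^{\nu}\Psi_{s_j}+\rho$ with $\rho$ satisfying the orthogonality conditions \eqref{eq0013}, so that $d_*(v)=\|\rho\|_{H^1(\mathcal{C})}$, and recall from Proposition~\ref{prop0002} that $\|\rho\|_{H^1(\mathcal{C})}\to0$ as $\delta\to0$. Let $\phi$ be the solution of \eqref{eq0020} furnished by Lemma~\ref{lem0003} and set $\varphi=\rho-\phi$, which then solves \eqref{eq0028}. By the triangle inequality
\[
d_*(v)\le\|\phi\|_{H^1(\mathcal{C})}+\|\varphi\|_{H^1(\mathcal{C})},
\]
so it suffices to bound the two pieces separately.

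The term $\|\phi\|_{H^1(\mathcal{C})}$ is controlled directly by \eqref{eq0027}. For $\|\varphi\|_{H^1(\mathcal{C})}$ I would decompose $\varphi=\sum_{j=1}^{\nu}\beta_j\Psi_{s_j}+\Psi^{\perp}$ as in Lemma~\ref{lem0004}; since $\|\Psi_{s_j}\|_{H^1(\mathcal{C})}=\|\Psi\|_{H^1(\mathcal{C})}$ and the off-diagonal inner products $\langle\Psi_{s_i},\Psi_{s_j}\rangle_{H^1(\mathcal{C})}$ are $O(Q)$ by Lemma~\ref{lem0001}, the estimates \eqref{eq0035}--\eqref{eq0036} yield $\|\varphi\|_{H^1(\mathcal{C})}\lesssim\sum_{j}|\beta_j|+\|\Psi^{\perp}\|_{H^1(\mathcal{C})}\lesssim\|f\|_{H^{-1}(\mathcal{C})}+Q^2$. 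Combining the two bounds, and using $Q^2\lesssim Q^{p/2}$ when $1<p<2$ and $Q^2\lesssim Q$ otherwise, we arrive at
\[
d_*(v)\lesssim\|f\|_{H^{-1}(\mathcal{C})}+\begin{cases}Q,& p>2,\\ Q|\log Q|^{1/2},& p=2,\\ Q^{p/2},& 1<p<2.\end{cases}
\]

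It then remains only to absorb $Q$ into $\|f\|_{H^{-1}(\mathcal{C})}$, which is where Lemma~\ref{lemn0001} enters: by \eqref{eq1146}, $Q\lesssim\|f\|_{H^{-1}(\mathcal{C})}+Q^{1/2}\|\rho\|_{H^1(\mathcal{C})}+\|\rho\|_{H^1(\mathcal{C})}^{p+1}$, and inserting the displayed bound for $\|\rho\|_{H^1(\mathcal{C})}=d_*(v)$ one checks that every error term either carries an extra positive power of $Q$ (for instance $Q^{1/2}\cdot Q^{p/2}=Q^{(1+p)/2}$ with $(1+p)/2>1$, and $\|\rho\|_{H^1(\mathcal{C})}^{p+1}\lesssim Q^{p(p+1)/2}+\|f\|_{H^{-1}(\mathcal{C})}^{p+1}$ with $p(p+1)/2>1$, both since $p>1$) or is dominated by $\|f\|_{H^{-1}(\mathcal{C})}$; since $Q$ and $\|f\|_{H^{-1}(\mathcal{C})}$ are small, absorbing the $o_\delta(1)Q$ part into the left-hand side gives $Q\lesssim\|f\|_{H^{-1}(\mathcal{C})}$. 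Substituting this back into the previous display and using that $t\mapsto t|\log t|^{1/2}$ is increasing near $0$ (for the case $p=2$) and that $\|f\|_{H^{-1}(\mathcal{C})}^{p/2}$ dominates $\|f\|_{H^{-1}(\mathcal{C})}$ (for $1<p<2$) produces the three claimed estimates. The one genuinely delicate point is this final bootstrap/absorption: one must verify that each remainder term is of strictly lower order in $Q$ (respectively in $\|f\|_{H^{-1}(\mathcal{C})}$), which is precisely what the inequalities $(1+p)/2>1$ and $p(p+1)/2>1$ guarantee.
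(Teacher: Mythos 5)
Your proposal is correct and follows essentially the same route as the paper: the paper's proof simply writes $d_*(v)=\|\rho\|_{H^1(\mathcal{C})}$ with $\rho=\phi+\varphi$ and invokes Lemmas~\ref{lemn0001}, \ref{lem0003} and \ref{lem0004}, which is exactly the combination you carry out. Your only addition is to make explicit the final absorption step $Q\lesssim\|f\|_{H^{-1}(\mathcal{C})}$ (checking that the remainders $Q^{(1+p)/2}$ and $Q^{p(p+1)/2}$ are $o(Q)$ since $p>1$), a detail the paper leaves to the reader; this is done correctly.
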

\begin{proof}
We recall that $d_*^2(v)=\|\rho\|_{H^1(\mathcal{C})}^2$ and $\rho=\phi+\varphi$.  The conclusion then follows immediately from Lemmas~\ref{lemn0001}, \ref{lem0003} and \ref{lem0004}.
\end{proof}

\section{Optimality of the stability to profile decompositions}
In this section, we will construct examples, as that in \cite{CFM2017,DSW2021}, to show that the orders in Propositions~\ref{propn0001} and \ref{prop0005} are sharp.  Let us begin with the examples for $\nu=1$.
\begin{proposition}\label{propn0002}
Let $b_{FS}(a)\leq b<a+1$ for $a<0$ and $a\leq b<a+1$ for $a\geq0$ and $a+b>0$.  Then the stability stated in Proposition~\ref{propn0001} is sharp in the sense that, there exists nonnegative $v_*\in H^1(\mathcal{C})$, with
\begin{eqnarray*}
\frac12(C_{a,b,N}^{-1})^{\frac{p+1}{p-1}}<\|v_*\|_{H^1(\mathcal{C})}^2<\frac32(C_{a,b,N}^{-1})^{\frac{p+1}{p-1}}\quad\text{and}\quad \|f_*\|_{H^{-1}(\mathcal{C})}\leq\delta
\end{eqnarray*}
for $\delta>0$ sufficiently small, such that $d_0(v_*)\gtrsim\|f_*\|_{H^{-1}(\mathcal{C})}$.
\end{proposition}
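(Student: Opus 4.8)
The plan is to exhibit a one-parameter family $\{v_{*,t}\}_{t>0}$ of small perturbations of $\Psi$ in a direction transversal to the manifold $\{c\Psi_s\mid c\in\bbr,\ s\in\bbr\}$, and to show that for such perturbations both the residual $f_*$ and the distance $d_0(v_*)$ are comparable to $t$; taking $t$ small then yields $d_0(v_*)\gtrsim\|f_*\|_{H^{-1}(\mathcal C)}$ with a uniform constant. The mechanism is that the linearization of \eqref{eq0006} at $\Psi$, namely $\mathcal L_0\varphi:=-\Delta_\theta\varphi-\partial_t^2\varphi+(a_c-a)^2\varphi-p\Psi^{p-1}\varphi$, is invertible on the orthogonal complement of its kernel, and this orthogonal complement is exactly the direction transversal to the manifold; this is the same nondegeneracy already used in the proof of Proposition~\ref{propn0001} (recall $\ker\mathcal L_0=\mathrm{span}\{\Psi'\}$).

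Concretely, I would fix $\varphi_0\in C_c^\infty(\mathcal C)$ with $\varphi_0\notin\mathrm{span}\{\Psi,\Psi'\}$ and set
\[
\varphi:=\varphi_0-\frac{\langle\varphi_0,\Psi\rangle_{H^1(\mathcal C)}}{\|\Psi\|_{H^1(\mathcal C)}^2}\Psi-\frac{\langle\varphi_0,\Psi'\rangle_{H^1(\mathcal C)}}{\|\Psi'\|_{H^1(\mathcal C)}^2}\Psi',
\]
a nonzero element of $H^1(\mathcal C)$ with $\langle\varphi,\Psi\rangle_{H^1(\mathcal C)}=\langle\varphi,\Psi'\rangle_{H^1(\mathcal C)}=0$; here I use $\langle\Psi,\Psi'\rangle_{H^1(\mathcal C)}=0$, which holds because $\|\Psi_s\|_{H^1(\mathcal C)}$ does not depend on $s$. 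By \eqref{eq0026} one has $|\Psi'|\le(a_c-a)\Psi$, so since $\varphi_0$ is compactly supported there is a constant $C_0$ with $|\varphi|\le C_0\Psi$ on $\mathcal C$. For $0<t<C_0^{-1}$ I put $v_*=v_{*,t}:=\Psi+t\varphi$ and $f_*:=-\Delta_\theta v_*-\partial_t^2v_*+(a_c-a)^2v_*-v_*^p$. Then $v_*\ge(1-C_0t)\Psi\ge0$, and, using orthogonality and the identity $\|\Psi\|_{H^1(\mathcal C)}^2=(C_{a,b,N}^{-1})^{\frac{p+1}{p-1}}$ (which follows from $\Psi$ being a minimizer of \eqref{eq0009} that solves \eqref{eq0006}), one gets $\|v_*\|_{H^1(\mathcal C)}^2=(C_{a,b,N}^{-1})^{\frac{p+1}{p-1}}+t^2\|\varphi\|_{H^1(\mathcal C)}^2$, which lies in $\big(\tfrac12,\tfrac32\big)(C_{a,b,N}^{-1})^{\frac{p+1}{p-1}}$ for $t$ small.

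Since $\Psi$ solves \eqref{eq0006}, a Taylor expansion together with $|\varphi|\le C_0\Psi$ gives $f_*=t\,\mathcal L_0\varphi+O(t^{\sigma_p})$ in $H^{-1}(\mathcal C)$, with $\sigma_p=2$ for $p\ge2$ and $\sigma_p=p$ for $1<p<2$; because $\varphi\perp\Psi'$ and $\varphi\ne0$ while $\ker\mathcal L_0=\mathrm{span}\{\Psi'\}$, we have $\mathcal L_0\varphi\ne0$, hence $\|f_*\|_{H^{-1}(\mathcal C)}\sim t$ (in particular $\le\delta$) for $t$ small, since $\sigma_p>1$. For the distance I use $d_0(v_*)\ge\widetilde d_0(v_*):=\inf_{c,s}\|v_*-c\Psi_s\|_{H^1(\mathcal C)}$. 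Because $\langle t\varphi,\Psi\rangle_{H^1(\mathcal C)}=\langle t\varphi,\Psi'\rangle_{H^1(\mathcal C)}=0$ and $\|\Psi_s\|_{H^1(\mathcal C)}$ is $s$-independent, $(c,s)=(1,0)$ is a critical point of $(c,s)\mapsto\|v_*-c\Psi_s\|_{H^1(\mathcal C)}^2$ whose Hessian is $\mathrm{diag}\big(2\|\Psi\|_{H^1(\mathcal C)}^2,\,2\|\Psi'\|_{H^1(\mathcal C)}^2+O(t)\big)$, positive definite for $t$ small; and since $v_*\to\Psi$ in $H^1(\mathcal C)$ as $t\to0$, any minimizer $(c_0,s_0)$ of $\widetilde d_0(v_*)$ converges to $(1,0)$, hence equals $(1,0)$ for $t$ small because $(1,0)$ is a nondegenerate critical point. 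Therefore $\widetilde d_0(v_*)=\|v_*-\Psi\|_{H^1(\mathcal C)}=t\|\varphi\|_{H^1(\mathcal C)}$, and combining with $\|f_*\|_{H^{-1}(\mathcal C)}\lesssim t$ we get $d_0(v_*)\ge t\|\varphi\|_{H^1(\mathcal C)}\gtrsim\|f_*\|_{H^{-1}(\mathcal C)}$, which is the claim.

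The argument is essentially soft, so the points needing care are bookkeeping rather than a real obstacle: (i) the lower bound $\|\mathcal L_0\varphi\|_{H^{-1}(\mathcal C)}\gtrsim1$, which is exactly the nondegeneracy of $\Psi$ already invoked for Proposition~\ref{propn0001}; (ii) the control of the nonlinear remainder $(\Psi+t\varphi)^p-\Psi^p-pt\Psi^{p-1}\varphi$ in $H^{-1}(\mathcal C)$, where the exponent $\sigma_p$ enters and the bound $|\varphi|\le C_0\Psi$ is used; and (iii) identifying the minimizer of $\widetilde d_0(v_*)$ with $(1,0)$ — alternatively one may skip (iii) by expanding $\|v_*-c\Psi_s\|_{H^1(\mathcal C)}^2$ about $(1,0)$ and noting that any minimizer $(c_0,s_0)=(1+\gamma,\sigma)$ satisfies $|\gamma|+|\sigma|=O(t)$, so that $\widetilde d_0(v_*)^2\ge\tfrac12 t^2\|\varphi\|_{H^1(\mathcal C)}^2$ for $t$ small, which already suffices.
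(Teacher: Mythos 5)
Your proposal is correct and follows essentially the same route as the paper: both constructions perturb the single bubble as $\Psi+\varepsilon\varphi$ with $\varphi$ compactly supported (up to a projection) and orthogonal to $\Psi'$, so that the residual $f_*$ and the distance $d_0$ are both comparable to $\varepsilon$ by the nondegeneracy $\ker\mathcal L_0=\mathrm{span}\{\Psi'\}$. The only differences are in execution: the paper takes $\varphi$ positive and even (making nonnegativity of $v_\varepsilon$ and the orthogonality to $\Psi'$ automatic) and obtains the lower bound on $d_0$ by testing the equation satisfied by the remainder $\widetilde\varphi_\varepsilon$, whereas you additionally impose $\varphi\perp\Psi$ so that the nearest point on the manifold is $\Psi$ itself and the distance can be computed exactly as $t\|\varphi\|_{H^1(\mathcal C)}$.
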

\begin{proof}
Let $v_\ve=\Psi+\ve\varphi$ where $\varphi\in C^\infty_0(\mathcal{C})$ is positive and even such that $\langle \Psi', \varphi\rangle_{H^1(\mathcal{C})}=0.$
Then as that in the proof of Proposition~\ref{propn0001}, we have
\begin{eqnarray*}
f_\ve&=&-\Delta_{\theta}v-\partial_t^2v+(a_c-a)^2v-v^p\\
&=&\ve(-\Delta_{\theta}\varphi-\partial_t^2\varphi+(a_c-a)^2\varphi-p\Psi^{p-1}\varphi)+O((\ve\varphi)^{\sigma_p}).
\end{eqnarray*}
It follows that $\|f_\ve\|_{H^{-1}(\mathcal{C})}\lesssim\ve$ for $\ve>0$ sufficiently small.  As that in the proof of Proposition~\ref{propn0001}, it is easy to see that $d_0(v_\ve)\lesssim\ve$ is attained by some $s_\ve\in\bbr$.  Thus, we can rewrite $v_\ve=\Psi_{s_\ve}+\widetilde{\varphi}_\ve$, where $d_0(v_\ve)=\|\widetilde{\varphi}_\ve\|_{H^1(\mathcal{C})}\lesssim\ve$ and
\begin{eqnarray*}
\langle \Psi_{s_\ve}, \widetilde{\varphi}_\ve\rangle_{H^1(\mathcal{C})}=\langle \Psi_{s_\ve}', \widetilde{\varphi}_\ve\rangle_{H^1(\mathcal{C})}=0.
\end{eqnarray*}
Note that
\begin{eqnarray*}
\|\Psi_{s_\ve}-\Psi\|_{H^1(\mathcal{C})}=\|\widetilde{\varphi}_\ve-\ve\varphi\|_{H^1(\mathcal{C})}\lesssim\ve,
\end{eqnarray*}
we have $s_\ve=o_{\ve}(1)$.  Clearly, $\widetilde{\varphi}_\ve$ satisfies
\begin{eqnarray*}
-\Delta_{\theta}\widetilde{\varphi}_\ve-\partial_t^2\widetilde{\varphi}_\ve+(a_c-a)^2\widetilde{\varphi}_\ve=(\Psi_{s_\ve}+\widetilde{\varphi}_\ve)^{p}
-\Psi_{s_\ve}^{p}+f_\ve.
\end{eqnarray*}
Let $\widetilde{f}_\ve$ be the projection of $f_\ve$ in $H^1(\mathcal{C})$, then by the Taylor expansion and some elementary inequalities,
\begin{eqnarray*}
|\langle \widetilde{\varphi}_\ve, \widetilde{f}_\ve\rangle_{H^1(\mathcal{C})}|\gtrsim-|\langle \widetilde{\varphi}_\ve, \widetilde{f}_\ve\rangle_{H^1(\mathcal{C})}|-|\langle \widetilde{\varphi}_\ve^{\sigma_p}, \widetilde{f}_\ve\rangle_{L^2(\mathcal{C})}|+\|f_\ve\|_{H^{-1}}^2.
\end{eqnarray*}
It follows that $d_0(v_\ve)=\|\widetilde{\varphi}_\ve\|_{H^1(\mathcal{C})}\gtrsim\|f_\ve\|_{H^{-1}}$.
\end{proof}

We next construct examples for $\nu\geq2$.
\begin{proposition}\label{prop0006}
Let $b_{FS}(a)\leq b<a+1$ for $a<0$ and $a\leq b<a+1$ for $a\geq0$ and $a+b>0$.  Then the stability stated in Proposition~\ref{prop0005} is sharp in the sense that, there exists nonnegative $v_*\in H^1(\mathcal{C})$ such that \begin{eqnarray*}
d_*(v_*)\gtrsim \left\{\aligned &\|f_*\|_{H^{-1}(\mathcal{C})},\quad p>2,\\
&\|f_*\|_{H^{-1}(\mathcal{C})}|\log(\|f_*\|_{H^{-1}(\mathcal{C})})|^{\frac12},\quad p=2,\\
&\|f_*\|_{H^{-1}(\mathcal{C})}^{\frac{p}{2}},\quad 1<p<2.
\endaligned\right.
\end{eqnarray*}
\end{proposition}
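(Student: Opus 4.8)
The plan is to run the Lyapunov--Schmidt reduction of Section~5 backwards: instead of producing $(\{s_j\},\phi,\{c_j\})$ from a given $v$, I would \emph{prescribe} them and read off $v_*$ and $f_*$. It suffices to take $\nu=2$. Fix $s_1=-\tfrac R2$, $s_2=\tfrac R2$ with a large parameter $R\to+\infty$, so that $Q=e^{-(a_c-a)R}\to0$, and let $\phi\in\widehat X^{\perp}$ (resp.\ $X^{\perp}$) and $(c_1,c_2)$ be the solution of the projected problem \eqref{eq0015} supplied by Lemma~\ref{lem0003}. Set
\begin{eqnarray*}
v_*=\Psi_{s_1}+\Psi_{s_2}+\phi .
\end{eqnarray*}
Since $\|\phi\|_{H^1(\mathcal{C})}\to0$ and the $\Psi_{s_j}$ are positive, $v_*\ge0$ for $R$ large, and by Lemma~\ref{lem0003} together with \eqref{eq0025} one has $\|v_*\|_{H^1(\mathcal{C})}^2=2\|\Psi\|_{H^1(\mathcal{C})}^2+o(1)$, so $v_*$ lies in the $\nu=2$ window. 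Because each $\Psi_{s_j}$ solves \eqref{eq0006} and $\phi$ solves \eqref{eq0015}, a direct computation gives the clean identity
\begin{eqnarray*}
f_*:=-\Delta_{\theta}v_*-\partial_t^2v_*+(a_c-a)^2v_*-v_*^{p}=c_1\Psi_{s_1}^{p-1}\Psi_{s_1}'+c_2\Psi_{s_2}^{p-1}\Psi_{s_2}' .
\end{eqnarray*}

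Next I would show $\|f_*\|_{H^{-1}(\mathcal{C})}\sim Q$ and $d_*(v_*)\sim\|\phi\|_{H^1(\mathcal{C})}$. The bound $|c_1|+|c_2|\lesssim Q$ is in Lemma~\ref{lem0003}; for the reverse, the $2\times2$ system defining $(c_1,c_2)$ has matrix $\mathrm{diag}\big(\langle\Psi_{s_j}^{p-1}\Psi_{s_j}',\Psi_{s_j}'\rangle_{L^2(\mathcal{C})}\big)+O(Q)$, a fixed positive diagonal plus an $O(Q)$ perturbation by \eqref{eq0033}, with right-hand side $-\langle E+N(\phi),\Psi_{s_i}'\rangle_{L^2(\mathcal{C})}$ whose leading part $-\langle E,\Psi_{s_i}'\rangle_{L^2(\mathcal{C})}\sim Q$ has a definite sign — for $\nu=2$ each bubble has a single neighbour, so the computation in the proof of Lemma~\ref{lem0003} produces no cancellation. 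Hence $|c_j|\sim Q$, and since $\Psi_{s_1}^{p-1}\Psi_{s_1}'$ and $\Psi_{s_2}^{p-1}\Psi_{s_2}'$ overlap only $O(Q)$, $\|f_*\|_{H^{-1}(\mathcal{C})}\sim Q$. As for $d_*$, the inequality $d_*(v_*)\le\|\phi\|_{H^1(\mathcal{C})}$ is immediate from the definition; writing an arbitrary competitor as $\Psi_{s_1+t_1}+\Psi_{s_2+t_2}$ and Taylor-expanding, the orthogonality $\langle\Psi_{s_j}',\phi\rangle_{H^1(\mathcal{C})}=0$ from \eqref{eq0013} kills the first-order cross terms, giving
\begin{eqnarray*}
\|v_*-\Psi_{s_1+t_1}-\Psi_{s_2+t_2}\|_{H^1(\mathcal{C})}^2\ge\|\phi\|_{H^1(\mathcal{C})}^2+c(t_1^2+t_2^2)-C(t_1^2+t_2^2)^2-C\|\phi\|_{H^1(\mathcal{C})}(t_1^2+t_2^2),
\end{eqnarray*}
which forces the minimizing pair $(t_1,t_2)$ to vanish for $R$ large, so $d_*(v_*)=\|\phi\|_{H^1(\mathcal{C})}$.

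The core of the argument, and what I expect to be the main obstacle, is the sharp \emph{lower} bound on $\|\phi\|_{H^1(\mathcal{C})}$ matching \eqref{eq0027}. Since $\|\phi\|_{H^1(\mathcal{C})}\ge(a_c-a)\|\phi\|_{L^2(\mathcal{C})}$, it is enough to bound $\|\phi\|_{L^2(\mathcal{C})}$ from below, which I would do through a pointwise \emph{lower} barrier for $\phi$. Near each $s_i$: rescaling, $Q^{-1}\phi(\cdot+s_i,\cdot)$ converges uniformly on compact sets (by the linear theory of Lemma~\ref{lem0002}) to $\widehat\phi$, the solution with $\widehat\phi\perp\Psi'$ of $-\Delta_{\theta}\widehat\phi-\partial_t^2\widehat\phi+(a_c-a)^2\widehat\phi-p\Psi^{p-1}\widehat\phi=\widehat E+\widehat c\,\Psi^{p-1}\Psi'$, where $\widehat E=\lim_{R\to\infty}Q^{-1}E(\cdot+s_i,\cdot)\not\equiv0$ and $\widehat c=\lim_{R\to\infty}Q^{-1}c_i$; since $\widehat E$ is not a multiple of the odd function $\Psi'$ (concretely, $\widehat c\,\Psi^{p-1}\Psi'$ vanishes at $t=0$ while $\widehat E(0)>0$), the Fredholm theory behind Lemma~\ref{lem0002} and the nondegeneracy of $\Psi$ give $\widehat\phi\not\equiv0$, whence $\phi\gtrsim Q$ on a fixed neighbourhood $\{|t-s_i|\le T\}$. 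On each gap $\{s_i+T<t<s_{i+1}-T\}$: there the zeroth-order coefficient of $\mathcal{L}$ is $\ge\tfrac12(a_c-a)^2>0$ and $E\sim Q\,e^{-(a_c-a)(p-2)|t-s_i|}$ dominates $N(\phi)$ and the $c_j$-terms (by \eqref{eqn19990} and \eqref{eqn19989}), so the maximum principle, run from below exactly as in the proof of Lemma~\ref{lem0002}, propagates $\phi\gtrsim E$ across the gap. Integrating over $\mathcal{C}$,
\begin{eqnarray*}
\|\phi\|_{L^2(\mathcal{C})}^2\gtrsim\int_{\mathcal{C}}E^2\sim\left\{\aligned&Q^2,\quad p>2,\\&Q^2|\log Q|,\quad p=2,\\&Q^{p},\quad 1<p<2,\endaligned\right.
\end{eqnarray*}
the three cases coming from $\int_0^{R/2}e^{-2(a_c-a)(p-2)r}\,dr$, which is $O(1)$, $\sim R\sim|\log Q|$, or $\sim e^{(2-p)(a_c-a)R}=Q^{-(2-p)}$ respectively. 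Combined with $\|f_*\|_{H^{-1}(\mathcal{C})}\sim Q$, this yields $d_*(v_*)\gtrsim\|f_*\|_{H^{-1}(\mathcal{C})}$, $\|f_*\|_{H^{-1}(\mathcal{C})}|\log\|f_*\|_{H^{-1}(\mathcal{C})}|^{\frac12}$, and $\|f_*\|_{H^{-1}(\mathcal{C})}^{\frac p2}$ in the three regimes, which is the assertion; one then transfers from $v_*$ to $u_*\in D^{1,2}_a(\bbr^N)$ via the isometry \eqref{eq0007}. The genuinely delicate steps are the non-cancellation securing $|c_j|\sim Q$ (equivalently $\|f_*\|_{H^{-1}(\mathcal{C})}\sim Q$) and the two-sided pointwise control of $\phi$ in the weighted spaces $X$, $\widehat X$; both are obtained by importing the corresponding arguments of \cite{DSW2021} and re-running them on the cylinder with the potential $(a_c-a)^2$.
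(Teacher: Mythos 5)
Your construction is the same as the paper's: take two bubbles at $s_{1}=-\tfrac R2$, $s_{2}=\tfrac R2$, solve the projected problem \eqref{eq0015} via Lemma~\ref{lem0003}, set $v=\sum_j\Psi_{s_j}+\phi$, and read off $f=\sum_j c_j\Psi_{s_j}^{p-1}\Psi_{s_j}'$ with $\|f\|_{H^{-1}(\mathcal{C})}\sim Q$ (the paper gets the lower bound on $\|f\|$ from Lemma~\ref{lemn0001} and Proposition~\ref{prop0005} rather than from your sign analysis of the $c_j$-system, but either route is defensible). The decisive step, however, is the lower bound on $d_*$, and there your argument has a genuine gap. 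The paper proves it by a duality/testing argument: it pairs the equation satisfied by the \emph{optimal} remainder $\widetilde\varphi_R$ with a cut-off $\varrho_R$ supported in the neck region, chosen so that $\int\Psi_{s'_1}^{p-1}\Psi_{s'_2}\varrho_R$ dominates while $\sum_j|c_{j,R}|\int\Psi_{s_{j,R}}^{p-1}|\Psi_{s_{j,R}}'|\varrho_R=o(\sum_j|c_{j,R}|)$ because $\varrho_R$ sits far from both bubbles; no pointwise information on $\phi$ is needed. You instead attempt a \emph{pointwise lower barrier} $\phi\gtrsim E$, and both of its ingredients fail as stated. First, $\widehat\phi\not\equiv0$ only tells you $|\phi|\gtrsim Q$ at some point near $s_i$; it gives neither a sign nor a lower bound $\phi\geq cQ>0$ on a fixed neighbourhood, and there is no reason for $\widehat\phi$ to be positive (it solves a linear nondegenerate problem with a sign-changing correction $\widehat c\,\Psi^{p-1}\Psi'$ on the right). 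Second, the "maximum principle run from below" across the gap requires $\phi$ to be bounded below by a positive multiple of the barrier at the gap endpoints; but the linear theory of Lemma~\ref{lem0002} only yields $|\phi|\lesssim Q\varphi_i(t)\sim E$ there, so the boundary data may be negative of exactly the order you are trying to propagate, and the comparison argument then produces nothing. This is fatal precisely in the cases $p=2$ and $1<p<2$, where the claimed $L^2$ mass $\int E^2\sim Q^2|\log Q|$ or $Q^{p}$ lives in the gap rather than near the bubbles.

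There is a second, smaller but real, gap: the proposition requires $v_*$ \emph{nonnegative}, and your justification "since $\|\phi\|_{H^1(\mathcal{C})}\to0$ and the $\Psi_{s_j}$ are positive, $v_*\ge0$ for $R$ large" is not valid — smallness of $\|\phi\|_{H^1(\mathcal{C})}$ does not prevent $\phi$ from dipping below $-\sum_j\Psi_{s_j}$ where the bubbles are exponentially small (in the neck and at infinity). The paper handles this by taking $v_*=v_R^+$ and spending a full paragraph estimating $\|v_R^-\|_{H^1(\mathcal{C})}$ (using the weighted bounds $\|\widetilde\phi_R\|_{\sharp},\|\widetilde\phi_R\|_{\natural}\lesssim1$ and the equation) to show the truncation does not destroy the lower bound on $d_*$. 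You should either reproduce that truncation argument or replace your pointwise step by the paper's testing argument, which simultaneously avoids the need for any lower barrier on $\phi$.
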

\begin{proof}
Let us consider the following equation:
\begin{eqnarray}\label{eq0045}
\left\{\aligned&-\Delta_{\theta}\widetilde{\phi}_R-\partial_t^2\widetilde{\phi}_R+(a_c-a)^2\widetilde{\phi}_R\\
&=|\sum_{j=1}^{2}\Psi_{s_{j,R}}+\widetilde{\phi}_R|^{p-1}(\sum_{j=1}^{2}\Psi_{s_{j,R}}+\widetilde{\phi}_R)\\
&-\sum_{j=1}^{2}\Psi_{s_{j,R}}^{p}
+\sum_{j=1}^{2}c_{j,R}\Psi_{s_{j,R}}^{p-1}\Psi_{s_{j,R}}',\quad \text{in }\mathcal{C},\\
&\langle \Psi_{s_{j,R}}', \widetilde{\phi}_R\rangle_{H^{1}(\mathcal{C})}=0\quad\text{for all }j=1,2,\endaligned\right.
\end{eqnarray}
where $s_{1,R}=-\frac{R}{2}$ and $s_{2,R}=\frac{R}{2}$.  By Lemma~\ref{lem0003}, \eqref{eq0045} is solvable for $R>0$ sufficiently large with $|c_{1,R}|+|c_{2,R}|\lesssim Q=e^{-(a_c-a)R}$
either for $b_{FS}(a)\leq b<a+1$ with $a<0$ or for $a\leq b<a+1$ with $a\geq0$ and $a+b>0$.
Let $v_R=\sum_{j=1}^{2}\Psi_{s_{j,R}}+\widetilde{\phi}_R$, then
\begin{eqnarray}
f_R&:=&-\Delta_{\theta}v_R-\partial_t^2v_R+(a_c-a)^2v_R-|v_R|^{p-1}v_R\label{eqn4444}\\
&=&\sum_{j=1}^{2}c_{j,R}\Psi_{s_{j,R}}^{p-1}\Psi_{s_{j,R}}'.\notag
\end{eqnarray}
which, together with Lemma~\ref{lemn0001} and Proposition~\ref{prop0005}, implies that
\begin{eqnarray}\label{eq0047}
\|f_R\|_{H^{-1}(\mathcal{C})}\sim\sum_{j=1}^{2}|c_{j,R}|\sim Q
\end{eqnarray}
for $R>0$ sufficiently large.
Note that as that in the proof of Proposition~\ref{prop0003}, we can show that $d_*^2(v_R)\leq\|\widetilde{\phi}_R\|_{H^1(\mathcal{C})}^2$ is attained at $\sum_{j=1}^{2}\Psi_{s'_{j,R}}$ for some $s'_{1,R}$ and $s'_{2,R}$.  Thus, we can rewrite $v_R=\sum_{j=1}^{2}\Psi_{s'_{j,R}}+\widetilde{\varphi}_R$, where $\widetilde{\varphi}_R\in(\text{span}\{\Psi_{s'_{j,R}}'\})^{\perp}$ in $H^1(\mathcal{C})$.  It follows that $d_*^2(v_R)=\|\widetilde{\varphi}_R\|_{H^1(\mathcal{C})}^2\leq\|\widetilde{\phi}_R\|_{H^1(\mathcal{C})}^2$.  Since
\begin{eqnarray*}
\|\sum_{j=1}^{2}\Psi_{s'_{j,R}}-\sum_{j=1}^{2}\Psi_{s_{j,R}}\|_{H^1(\mathcal{C})}\lesssim\|\widetilde{\phi}_R\|_{H^1(\mathcal{C})}\to0\quad\text{as }R\to+\infty,
\end{eqnarray*}
we have $s'_{j,R}=s_{j,R}+o_R(1)$.  Clearly, by \eqref{eq0045}, $\widetilde{\varphi}_R$ satisfies the following equation:
\begin{eqnarray}\label{eq0145}
\left\{\aligned&-\Delta_{\theta}\widetilde{\varphi}_R-\partial_t^2\widetilde{\varphi}_R+(a_c-a)^2\widetilde{\varphi}_R\\
&=|\sum_{j=1}^{2}\Psi_{s'_{j,R}}+\widetilde{\varphi}_R|^{p-1}(\sum_{j=1}^{2}\Psi_{s'_{j,R}}+\widetilde{\varphi}_R)\\
&-\sum_{j=1}^{2}\Psi_{s'_{j,R}}^{p}
+\sum_{j=1}^{2}c_{j,R}\Psi_{s_{j,R}}^{p-1}\Psi_{s_{j,R}}',\quad \text{in }\mathcal{C},\\
&\langle \Psi_{s'_{j,R}}', \widetilde{\varphi}_R\rangle_{H^{1}(\mathcal{C})}=0\quad\text{for all }j=1,2.\endaligned\right.
\end{eqnarray}
Let $\varrho_R:[0, 1]\to[0, 1]$ be a smooth cut-off function such that
\begin{eqnarray}\label{eqn29980}
\varrho_R(t)=\left\{\aligned&1,\quad s'_{1,R}+\frac{R}{2}-3\leq t\leq s'_{1,R}+\frac{R}{2}-2,\\
&0,\quad t\leq s'_{1,R}+\frac{R}{2}-4 \text{ or }t\geq s'_{1,R}+\frac{R}{2}-1.\endaligned\right.
\end{eqnarray}
Then $\|\varrho_R\|_{H^1(\mathcal{C})}\lesssim1$.  Similar to that of \eqref{eqn19985}, \eqref{eqn19984} and \eqref{eqn19999},
\begin{eqnarray*}
|\sum_{j=1}^{2}\Psi_{s'_{j,R}}+\widetilde{\varphi}_R|^{p-1}(\sum_{j=1}^{2}\Psi_{s'_{j,R}}+\widetilde{\varphi}_R)-\sum_{j=1}^{2}\Psi_{s'_{j,R}}^{p}\gtrsim \Psi_{s'_{1,R}}^{p-1}\Psi_{s'_{2,R}}+p\Psi_{s'_{1,R}}^{p-1}\widetilde{\varphi}_R+O(|\widetilde{\varphi}_R|^{\sigma_p})
\end{eqnarray*}
in the region $[s'_{1,R}+\frac{R}{2}-4, s'_{1,R}+\frac{R}{2}-1]$.  Thus, by multiplying \eqref{eq0145} with $\varrho_R$ and integrating by parts,
\begin{eqnarray*}
\|\widetilde{\varphi}_R\|_{H^1(\mathcal{C})}&\gtrsim&\int_{\mathcal{C}}(\Psi_{s'_{1,R}}^{p-1}\Psi_{s'_{2,R}}+p\Psi_{s'_{1,R}}^{p-1}\widetilde{\varphi}_R
+O(|\widetilde{\varphi}_R|^{\sigma_p}))\varrho_R
+\int_{\mathcal{C}}\sum_{j=1}^{2}c_{j,R}\Psi_{s_{j,R}}^{p-1}\Psi_{s_{j,R}}'\varrho_R\\
&\gtrsim&\int_{\mathcal{C}}\Psi_{s'_{1,R}}^{p-1}\Psi_{s'_{2,R}}\varrho_R-\sum_{j=1}^{2}|c_{j,R}|\int_{\mathcal{C}}\Psi_{s_{j,R}}^{p-1}\Psi_{s_{j,R}}'\varrho_R
-\|\widetilde{\varphi}_R\|_{H^1(\mathcal{C})}.
\end{eqnarray*}
By \eqref{eq0026} and \eqref{eqn29980},
\begin{eqnarray*}
\sum_{j=1}^{2}|c_{j,R}|\int_{\mathcal{C}}\Psi_{s_{j,R}}^{p-1}\Psi_{s_{j,R}}'\varrho_R=o(\sum_{j=1}^{2}|c_{j,R}|)\quad\text{as }R\to+\infty
\end{eqnarray*}
and
\begin{eqnarray*}
\int_{\mathcal{C}}\Psi_{s'_{1,R}}^{p-1}\Psi_{s'_{2,R}}\varrho_R\gtrsim\int_{\frac{R}{2}-3}^{\frac{R}{2}-2}e^{-(p-1)(a_c-a)r}e^{-(a_c-a)(R-r)}\sim\left\{\aligned&Q,\quad p\geq2,\\
&Q^{\frac{p}{2}},\quad 1<p<2\endaligned\right.
\end{eqnarray*}
for $R>0$ sufficiently large.
It follows from \eqref{eq0047} that
\begin{eqnarray*}
d_*(v_R)\gtrsim \left\{\aligned &\|f_R\|_{H^{-1}(\mathcal{C})},\quad p>2,\\
&\|f_R\|_{H^{-1}(\mathcal{C})}^{\frac{p}{2}},\quad 1<p<2
\endaligned\right.
\end{eqnarray*}
for $R>0$ sufficiently large.
For $p=2$, we shall modify the test function $\varrho_R$ by
\begin{eqnarray}\label{eqn19980}
\widetilde{\varrho}_R(t)=\left\{\aligned&1,\quad s'_{1,R}+\frac{R}{4}\leq t\leq s'_{1,R}+\frac{R}{2}-2,\\
&0,\quad t\leq s'_{1,R}+\frac{R}{4}-1 \text{ or }t\geq s'_{1,R}+\frac{R}{2}-1.\endaligned\right.
\end{eqnarray}
Then $\|\widetilde{\varrho}_R\|_{H^1(\mathcal{C})}\lesssim\sqrt{R}$ for $R>0$ sufficiently large.  Thus, by multiplying \eqref{eq0145} with $\widetilde{\varrho}_R$ and integrating by parts,
\begin{eqnarray*}
\sqrt{R}\|\widetilde{\varphi}_R\|_{H^1(\mathcal{C})}&\gtrsim&\int_{\mathcal{C}}(\Psi_{s'_{1,R}}\Psi_{s'_{2,R}}+p\Psi_{s'_{1,R}}\widetilde{\varphi}_R
+O(\widetilde{\varphi}_R^{2}))\widetilde{\varrho}_R
+\int_{\mathcal{C}}\sum_{j=1}^{2}c_{j,R}\Psi_{s_{j,R}}\Psi_{s_{j,R}}'\widetilde{\varrho}_R\\
&\gtrsim&\int_{\mathcal{C}}\Psi_{s'_{1,R}}\Psi_{s'_{2,R}}\widetilde{\varrho}_R-\sum_{j=1}^{2}|c_{j,R}|\int_{\mathcal{C}}\Psi_{s_{j,R}}\Psi_{s_{j,R}}'\widetilde{\varrho}_R
-\|\widetilde{\varphi}_R\|_{H^1(\mathcal{C})}\sqrt{R}.
\end{eqnarray*}
By \eqref{eq0026} and \eqref{eqn19980},
\begin{eqnarray*}
\sum_{j=1}^{2}|c_{j,R}|\int_{\mathcal{C}}\Psi_{s_{j,R}}\Psi_{s_{j,R}}'\widetilde{\varrho}_R\sim
\sum_{j=1}^{2}|c_{j,R}|\int_{\frac{R}{4}}^{\frac{R}{2}}e^{-2(a_c-a)r}\sim\sum_{j=1}^{2}|c_{j,R}| e^{-(a_c-a)\frac{R}{2}}
\end{eqnarray*}
as $R\to+\infty$.
On the other hand,
\begin{eqnarray*}
\int_{\mathcal{C}}\Psi_{s'_{1,R}}\Psi_{s'_{2,R}}\widetilde{\varrho}_R\gtrsim\int_{\frac{R}{4}}^{\frac{R}{2}-1}e^{-(a_c-a)r}e^{-(a_c-a)(R-r)}\sim Re^{-(a_c-a)R}
\end{eqnarray*}
for $R>0$ sufficiently large.
It follows from \eqref{eq0047} that
\begin{eqnarray*}
d_*(v_R)\gtrsim\|f_R\|_{H^{-1}(\mathcal{C})}|\log(\|f_R\|_{H^{-1}(\mathcal{C})})|^{\frac12}
\end{eqnarray*}
for $p=2$ and $R>0$ sufficiently large.  Now, we take $v_*=v_R^+$ then $v_*=v_R+v_R^-$, where $v_R^\pm=\max\{\pm v_R, 0\}$.  Clearly, we have $0\leq v_R^-\leq |\widetilde{\phi}_R|$ since $\sum_{j=1}^{2}\Psi_{s_{j,R}}$ is positive.  It follows from \eqref{eqn4444} that
\begin{eqnarray*}
\|v_R^-\|_{H^1(\mathcal{C})}^2=\langle v_R^-, V_R\rangle_{H^1(\mathcal{C})}=\int_{\mathcal{C}}|v_R|^{p-1}v_Rv_R^-+\int_{\mathcal{C}}f_Rv_R^-\lesssim\|v_R^-\|_{H^1(\mathcal{C})}^{p+1}+\int_{\mathcal{C}}|f_R||v_R^-|.
\end{eqnarray*}
For $1<p\leq 2$, by Lemma~\ref{lem0003} and \eqref{eq0047},
\begin{eqnarray*}
\|v_R^-\|_{H^1(\mathcal{C})}^2\lesssim\int_{\mathcal{C}}|f_R||\widetilde{\phi}_R|\lesssim\left\{\aligned &Q^2|\log(Q)|^{\frac12},\quad p=2,\\
&Q^{1+\frac{p}{2}},\quad 1<p<2.
\endaligned\right.
\end{eqnarray*}
For $p>2$, recall that by Lemma~\ref{lem0003}, $\|\widetilde{\phi}_R\|_{\sharp}\lesssim1$.  Thus, by \eqref{eq0026} and \eqref{eqn19996}, $v_R^-=0$ for $|t-s_{j,R}|\leq \frac{R}{2}$.  It follows that
\begin{eqnarray*}
\|v_R^-\|_{H^1(\mathcal{C})}^2\lesssim\int_{\mathcal{C}}|f_R||v_R^-|\lesssim\int_{(\cup_{j=1}^2\{|t-s_{j,R}|\leq R\})^c}|f_R||\widetilde{\phi}_R|=o(Q^2).
\end{eqnarray*}
The conclusion then follows from $d_*(v_*)\geq d_*(v_R)-\|v_R^-\|_{H^1(\mathcal{C})}$.
\end{proof}

We close this section by the proof of Theorem~\ref{thm0002}.

\vskip0.12in

\noindent\textbf{Proof of Theorem~\ref{thm0002}:} It follows immediately from \eqref{eq0007} and Propositions~\ref{propn0001}, \ref{prop0005}, \ref{propn0002} and \ref{prop0006}.
\hfill$\Box$

\section{Acknowledgements}
The research of J. Wei is
partially supported by NSERC of Canada and the research of Y. Wu is supported by NSFC (No. 11971339).

\end{document}